\documentclass{elsarticle}
\usepackage{lineno,hyperref}
\modulolinenumbers[5]

\journal{Computers \& Mathematics with Applications}
\usepackage{geometry}
\geometry{
 a4paper,
 total={170mm,257mm},
 left=35mm,
 right=35mm,
 top=35mm,
 bottom=35mm,
 }

\usepackage{amsmath}
\usepackage{mathtools}
\usepackage{graphicx}
\usepackage{array}
\usepackage{color}
\usepackage{amsfonts,latexsym}
\usepackage{soul}
\usepackage{multirow}
\usepackage{bbm}
\usepackage{tabularx}
\usepackage{hyperref}
\usepackage{amssymb}
\usepackage{amsthm}
\usepackage[export]{adjustbox}
\usepackage{epstopdf}
\usepackage{algorithm}
\usepackage{algorithmic,color} 
\usepackage{multirow}
\usepackage{appendix}
\usepackage{cancel}
\usepackage{dsfont}
\usepackage{mathrsfs}
\usepackage{caption}
\usepackage{subcaption}

\newcommand{\R}{{\mathbb R}}
\newcommand{\dd}{{\textbf{d}}}
\newcommand{\nbold}{{\textbf{n}}}
\newcommand{\dv}{{\textnormal{div}}}
\newcommand{\WW}{{\textbf{W}}}
\newcommand{\sbold}{{\textbf{s}}}

\newtheorem{teo}{Theorem}[section]
\newtheorem{cor}[teo]{Corollary}
\newtheorem{prop}[teo]{Proposition}

\newtheorem{lemma}[teo]{Lemma}

\date{\today}

\begin{document}

\begin{frontmatter}

\title{Fast and stable schemes for non-linear osmosis filtering} 

\author{L.Calatroni\fnref{myfootnote1}}
\fntext[myfootnote1]{Universite C\^ote d’Azur, CNRS, Inria, Laboratoire I3S, Sophia-Antipolis, France}

\author{S.Morigi\fnref{myfootnote2}}
\fntext[myfootnote2]{Department of Mathematics, University of Bologna, Bologna, Italy.}

\author{S.Parisotto\fnref{myfootnote3}}
\fntext[myfootnote3]{Department of Applied Mathematics and Theoretical Physics, University of Cambridge, Wilberforce Road, Cambridge, CB3 0WA, UK}

\author{G.A.Recupero\fnref{myfootnote2}}




\begin{abstract}
We consider a non-linear variant of the transport-diffusion osmosis model for solving a variety of imaging problems such as shadow/soft-light removal and compact data representation. 
The non-linear behaviour is encoded in terms of a  general scalar function $g$ with suitable properties, which allows to balance the diffusion intensity on the different regions of the image while preventing smoothing artefacts. For the proposed model, conservation properties (intensity and non-negativity) are proved and a variational interpretation is showed for specific choices of $g$. Upon suitable spatial discretisation,  both an explicit and a semi-implicit iterative scheme are considered, for which convergence restrictions and unconditional stability are proved, respectively. To validate the proposed modelling and the computational speed of the numerical schemes considered, we report several results and comparisons for the problem of shadow/light-spot  removal and compact data representation, showing that artefact-free and computationally efficient results are obtained in comparison to standard linear and anisotropic models, and state-of-the art approaches.
\end{abstract}

\begin{keyword}
Osmosis filtering  \sep Non-linear PDEs \sep Semi-implicit finite-difference schemes \sep   Shadow removal \sep Light-spot removal \sep Compact data representation
\end{keyword}

\end{frontmatter}

\section{Introduction}

The use of Partial Differential Equations (PDEs) in the context of image processing and computer vision is nowadays pretty well-established. Linear and non-linear diffusion PDEs have been indeed considered over the last decades for several several image processing problems, such as denoising, deconvolution, inpainting and segmentation, see, e.g., \cite{weickert98,aubert2006mathematical,ChanShen,schoenlieb}.
For these tasks, standard approaches resorting to the use of parabolic (anisotropic) non-linear diffusion PDEs of second or higher-order have been used, and their interpretation as gradient flows of suitable non-smooth regularisation functionals such as the edge-preserving Total Variation (TV) semi-norm have been drawn.
In this work, we concentrate on a non-linear PDE where a non-linear diffusion term is combined with a further transport term, which finds its motivations into the modelling of the physical process of \emph{osmosis}.

In biology, such term indicates a non-symmetric  transport process where molecules pass through a semipermeable membrane in such a way that at its steady state, the liquid concentrations on both sides of the
membrane differ. Osmosis is, for instance, the primary mechanism for transporting water inside and outside living cells, and it stands at the very basis of many medicine and engineering processes.
Due to its non-trivial (i.e. non-flat) steady states, such process can be seen indeed as the non-symmetric counterpart of standard diffusion processes in the sense that, during evolution, the probability of moving from inside to outside the cell through the membrane is not equal to the probability of performing the reverse process \cite{Hagenburg12o}.

The corresponding interpretation of the osmosis process in the context of image analysis involves the study of how intensity values at each pixel are propagated within the image domain. Previous works have considered osmosis filtering in the context of imaging, either in linear isotropic \cite{weickert-ssvm} or anisotropic \cite{Parisotto2019AnisotropicOF} form, proposing also some efficient numerical schemes based on standard finite-difference discretisation \cite{vogel-ssvm} and/or operator splitting \cite{Calatroni_ADI}.
For all these models, the combination of diffusion with transport is made thanks to the use of a drift term defined in terms of a vector field which, intuitively, models the non-symmetric nature of the process.
Interestingly, such PDEs are all in divergence form, thus allowing for provable conservation of average intensity of the initial image as well as non-negativity. Furthermore, upon a particular choice of the drift term, an explicit expression of the  non-flat steady state can be found.
When applied to imaging tasks aiming at balancing intensity information (i.e. for shadow/light-spot removal problems and/or for image fusion), the advantage of using an  osmosis approach is that it can integrate mismatching image data in a seamless way (see also \cite{Parisotto2020} for a variational osmosis-based model for image fusion  showing the advantage with-respect-to state of the art methods). Recently, a more sophisticated approach has been proposed in \cite{Benalia} 
where a non-local version of the osmosis operator along with an anisotropic regularisation term have been integrated in a variational formulation.
This model achieves good performance in shadow removal, but at the price of a high computational cost.

Due to its differential structure, osmosis filtering shows similarities to gradient domain methods introduced in computer graphics for a wide number of surface editing and reconstruction applications, such as the famous Poisson editing, see e.g., \cite{Poisson,GraDomain}.
Compared to these approaches, image osmosis  offers the additional advantage of being invariant under multiplicative illumination changes and provide very good results in shadow removal and image cloning. However, due to the underlying linear smoothing enforced by the diffusion terms, standard osmosis approaches typically suffer from some reconstruction artefacts (typically, oversmoothing) which prevent their practical use.
For reference, we further refer the reader to \cite{Finlayson1,Finlayson2,ShadowReview2013,Guo,Le} where alternative approaches to shadow removal are used, based, for instance, of suitable mapping into appropriate colour spaces and/or learning approaches.

\paragraph{Contribution} In this paper, we present a non-linear version of the image osmosis model, which avoids standard smoothing artefacts of linear models thanks to the use of a non-linear diffusivity function promoting isotropic edge-stopping diffusion. 
Analytically, we prove that the proposed evolution model maintains the same conservation properties of the original linear model, and that it can be interpreted as the gradient flow of a suitable non-smooth energy for specific choices of the drift term.
We then show that analogous properties hold also at a discrete level, whenever appropriate finite-difference discretisation stencils and explicit and semi-implicit schemes are used. For those schemes, we prove conditional and unconditional stability, respectively. Such property is particularly interesting as it allows for the fast computation of the desired steady-state solution.
Thanks to such stability property, the proposed model is fully automatic as no hyperparameters need to be tuned in it.
The proposed model and algorithms are validated on exemplar image processing tasks, such as shadow removal, light removal and compact data representation, and several comparisons with other state-of-the-art  as well as alternative osmosis filtering models are given. 

\paragraph{Structure of the paper} In Section \ref{sub:linear}, a short review of existing osmosis filtering models and of their use in applications is given.
In Section \ref{sec:model}, the proposed continuous non-linear osmosis model is proposed and its conservation and variational properties are proved. In Section \ref{sec:discr}, spatial and temporal discretisation schemes are studied, in particular from the point of view of consistency with the aforementioned properties in the continuous setting and in terms of stability and convergence.
In Section \ref{sec:results}, several results for three different imaging applications (shadow removal, light removal and compact data representation) are showed, confirming that the use of the proposed non-linear model improves upon linear and state-of-the art methods.
Some conclusions are reported in Section \ref{sec:concl}.

\paragraph{Notation} We will denote the set of non-negative and positive real numbers by $\R_{+}$ and $\R_+^*$, respectively. In the following Sections 
\ref{sub:linear} and \ref{sec:model} we will use the \textbf{bold} notation to denote vectorial fields in $\R^N$. To avoid unnecessary heavy notation in the discretised setting introduced starting from Section \ref{sec:discr}, we will use standard (unbold) notation to denote both vectors  in $\R^N$ and matrices in $\R^{N\times N}$.

\section{Linear osmosis filtering}\label{sub:linear}

The original image osmosis model firstly introduced in \cite{weickert-ssvm} in the context of imaging is a parabolic, linear and isotropic transport-diffusion PDE defined on a rectangular domain $\Omega\subset \R^2$ for an initial positive image $f:\Omega\rightarrow\R_{+}^*$ (the extension to the vectorial case is immediate). 
Its transport component is defined in terms of a drift vector field $\dd:\Omega\rightarrow \R^2$ whose expression will be made more precise in the following. Assuming homogeneous Neumann boundary conditions, the model reads:
\begin{align}\label{eq:linear}
    \begin{cases}
    \partial_t u = \dv \left( \, \nabla u - \dd u\right) \quad & \text{on } \Omega \times (0,T], \\
    \left< \nabla u - \dd u , \nbold \right> = 0 & \text{on } \partial \Omega \times (0,T];
    \\
    u(x,0) = f(x) & \text{on } \Omega,   
    \end{cases}
\end{align}
where $T\in (0,+\infty)$ is a final time, and  $\nbold$ is the outer normal vector
to the image boundary $\partial\Omega$. In \cite{weickert-ssvm}, it  has been proved that the evolution process \eqref{eq:linear} preserves the average mass (grey value) of the image as well as its non-negativity. 
Moreover, if the drift $\dd$ is defined in a \emph{canonical} form, i.e. in terms of a given reference image $v:\Omega\rightarrow\R_+^*$ as:
\begin{equation}\label{eq:d_canonical}
   \dd := \nabla \log v, 
\end{equation}
then it can be showed that the steady-state solution $w: \Omega \to \R_{+}$ is nothing but a multiplicative rescaling of $v$, that is $w=cv$, with $c>0$ \cite[Proposition Ic)]{weickert-ssvm}, see Figure \ref{fig:plain_osmosis} for a visual illustration.
It can be shown that analogous properties hold upon suitable finite-difference discretisation \cite{vogel-ssvm}.

\begin{figure}[h]
    \centering
    \begin{subfigure}[b]{0.21\textwidth}
        \centering
        \includegraphics[width=\textwidth]{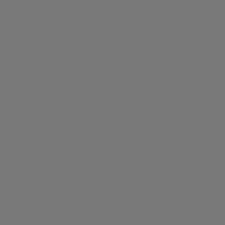} 
        \caption{$f$}
        \label{fig:plain_1}
    \end{subfigure}
    \begin{subfigure}[b]{0.21\textwidth}
        \centering
        \includegraphics[width=\textwidth]{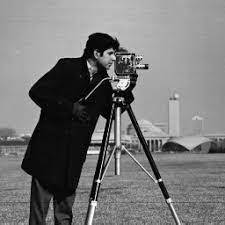}
        \caption{$v$}
        \label{fig:plain_2}
    \end{subfigure}
    \begin{subfigure}[b]{0.21\textwidth}
        \centering
        \includegraphics[height=1\textwidth,width=1\textwidth]{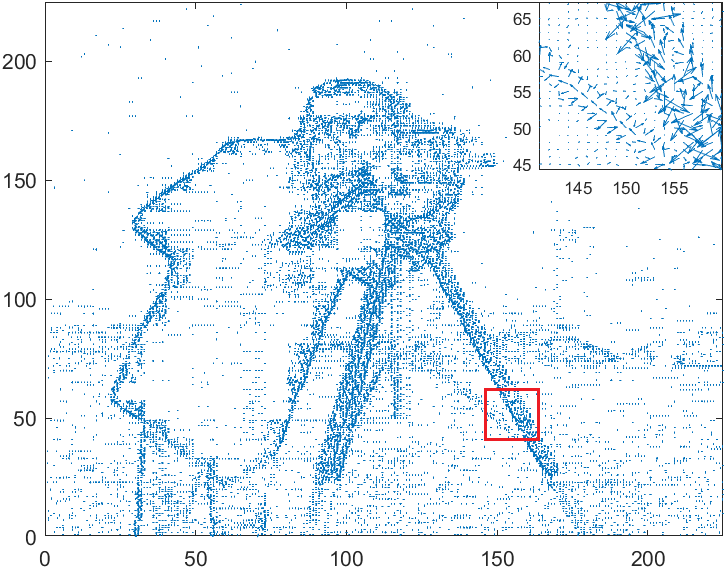}
        \caption{$\dd$}
        \label{fig:plain_3}
    \end{subfigure}
    \begin{subfigure}[b]{0.21\textwidth}
        \centering
        \includegraphics[width=\textwidth]{photoop.jpeg}
        \caption{$w$}
        \label{fig:plain_4}
    \end{subfigure}
    \caption{Evolution of the osmosis model \eqref{eq:linear}. From left to right: (constant) initial image $f$, reference image $v$, vector representation of drift $\dd$ (with zoom) and steady state $w$.}
    \label{fig:plain_osmosis}
\end{figure}

An anisotropic linear version of  \eqref{eq:linear}, with analogous conservation and convergence properties, has been developed in \cite{Parisotto2019AnisotropicOF} in order to favour directional linear diffusion. Similarly as in \cite{weickert98}, this has been done by encoding local directional information of the image in a positive semi-definite symmetric tensor $\WW$. The corresponding anisotropic model takes the form:
\begin{align}\label{eq:aniso}
    \begin{cases}
    \partial_t u = \dv \left( \WW\, (\nabla u - \dd u) \right) \quad & \text{on } \Omega \times (0,T], 
    \\
    \left< \WW (\nabla u - \dd u) , n \right> = 0 & \text{on } \partial \Omega \times (0,T];
    \\
    u(x,0) = f(x) & \text{on } \Omega.
    \end{cases}
\end{align}

For both models \eqref{eq:linear} and \eqref{eq:aniso}, there is of course no clear interest in observing convergence to a rescaled version $w$ of an image which is need in advance to define the model.
However, upon slight modifications of the drift term $\dd$,  osmosis models can be applied to many different problems such as shadow removal, compact data representation and image cloning, as shown in \cite{weickert-ssvm}. In the following, we review how these tasks can be tackled by osmosis.

\subsection{Applications of the osmosis filtering}\label{sec:appl}

Osmosis models in the form \eqref{eq:linear} and \eqref{eq:aniso} can be easily modified for several imaging applications.
To do so, a suitable decomposition of the image domain $\Omega$ in terms of a disjoint union of subsets is required. Let thus $\Omega = \Omega_{in} \sqcup \Omega_b \sqcup \Omega_{out}$ be a given partition of $\Omega$, where $\Omega_b$ denotes a possibly expanded boundary region separating $\Omega_{in}$ from $\Omega_{out}$. 
Given the reference image $v$, a simple modification of the drift term $\dd$ consists in forcing it to vanish in correspondence of all points of $\Omega_b$ so that an artificial set of discontinuities is created. In mathematical terms, this means choosing for all $x\in\Omega$:
\begin{equation}\label{eq:d_shrem}
    \dd(x) := \left(\nabla (\log v) ~\chi_{\Omega_b^C} \right) (x) =
    \begin{cases}
    \frac{\nabla v(x)}{v(x)} \quad &\text{if }x\in \Omega_{in} \cup \Omega_{out}, \\
    0 & \text{if }x\in \Omega_b,
    \end{cases},
\end{equation}
where for $S\subset \Omega$, $\chi_S:\Omega\to\left\{0,1\right\}$ denotes the characteristic function of the subset $S$.
Practically, the role of the characteristic function is played upon discretisation by a mask matrix $M\in \left\{0,1\right\}^{|\Omega|}$ whose null pixels identify the region $\Omega_{b}$.
As $\dd$ is set to zero on $\Omega_b$, the transport term vanishes therein, hence equations \eqref{eq:linear} and \eqref{eq:aniso} induce linear isotropic/anisotropic diffusion, respectively.
Note that the preliminary detection of $\Omega_b$ can be obtained, for instance, by standard image segmentation techniques, such as, e.g., edge detection \cite{ChanShen}.

\paragraph{Shadow removal}
For \textit{shadow removal} problems the given image $f$ is characterised by the presence of a constant (i.e. with no space-variant intensity) shadowed region $S\subset\Omega$, see Fig. \ref{fig:shadowlinear_1}. The  mask $M$ is thus associated here to a preliminary detection of the shadow boundary $\Omega_b$ (for instance by using \cite{ShadowReview2013,BabaSiggraph2003}),
see Fig.\ref{fig:shadowlinear_2}.
By choosing as a reference image $v=f$ in \eqref{eq:d_shrem}, we can thus evolve model \eqref{eq:linear} as done, e.g., in \cite{weickert-ssvm,vogel-ssvm,Calatroni_ADI}, to obtain the result in Fig.\ref{fig:shadowlinear_3}.

Due to its intrinsic transport-diffusion properties, we observe that the linear osmosis filtering does correctly perform shadow removal by balancing the intensity between the shadowed/un-shadowed regions of the image. 
However, due to the choice \eqref{eq:d_shrem}, the drift term $\dd$ vanishes on the $\Omega_b$, hence,   pure Laplace diffusion is enforced therein.
By considering a non-linear version of \eqref{fig:plain_osmosis} as described in the following sections, we will improve upon such drawback and obtain the more accurate result reported in Fig.\ref{fig:shadowlinear_4}.
Furthermore, in order to address the more realistic situations of shadows with blurred (i.e. not sharp) boundaries as it is observed in natural images, we will further show how the proposed model can be applied also in case of soft-edged shadows, see Section \ref{sec:shrem}.

\begin{figure}[h!]
    \centering
    \begin{subfigure}[b]{0.2\textwidth}
        \centering
        \includegraphics[width=\textwidth]{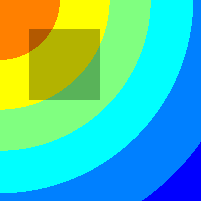}
        \caption{$f=v$}
        \label{fig:shadowlinear_1}
    \end{subfigure}
    \begin{subfigure}[b]{0.2\textwidth}
        \centering
        \fbox{\includegraphics[width=0.91\textwidth]{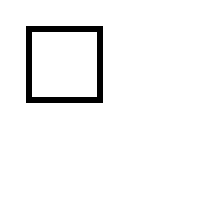}}
        \caption{mask $M$}
        \label{fig:shadowlinear_2}
    \end{subfigure}
    \begin{subfigure}[b]{0.2\textwidth}
        \centering
        \includegraphics[width=\textwidth]{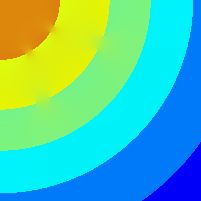}
        \caption{$w$ (linear)}
        \label{fig:shadowlinear_3}
    \end{subfigure}
    \begin{subfigure}[b]{0.2\textwidth}
        \centering
        \includegraphics[width=\textwidth]{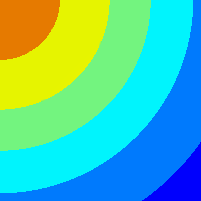}
        \caption{$w$ (non-linear)}
        \label{fig:shadowlinear_4}
    \end{subfigure}
    \caption{Shadow removal using linear \eqref{eq:linear} and non-linear (proposed) osmosis filtering.}
    \label{fig:shadowlinear}
\end{figure}

\paragraph{Spot-light removal} A problem analogous to shadow removal is the \textit{spot-light removal} problem where, similarly, the task is to remove a light spot from a given image $f$ (Fig. \ref{fig:lightlinear_1}). Given the segmentation of the constant spot light boundary (Fig. \ref{fig:lightlinear_2}), the use of osmosis removes the spot by balancing the intensity between the two regions in the image.
The results obtained by applying standard linear and the proposed non-linear osmosis filtering are shown in Fig. \ref{fig:lightlinear_3}-\ref{fig:lightlinear_4}.

\begin{figure}[h!]
    \centering
    \begin{subfigure}[b]{0.2\textwidth}
        \centering
        \includegraphics[width=\textwidth]{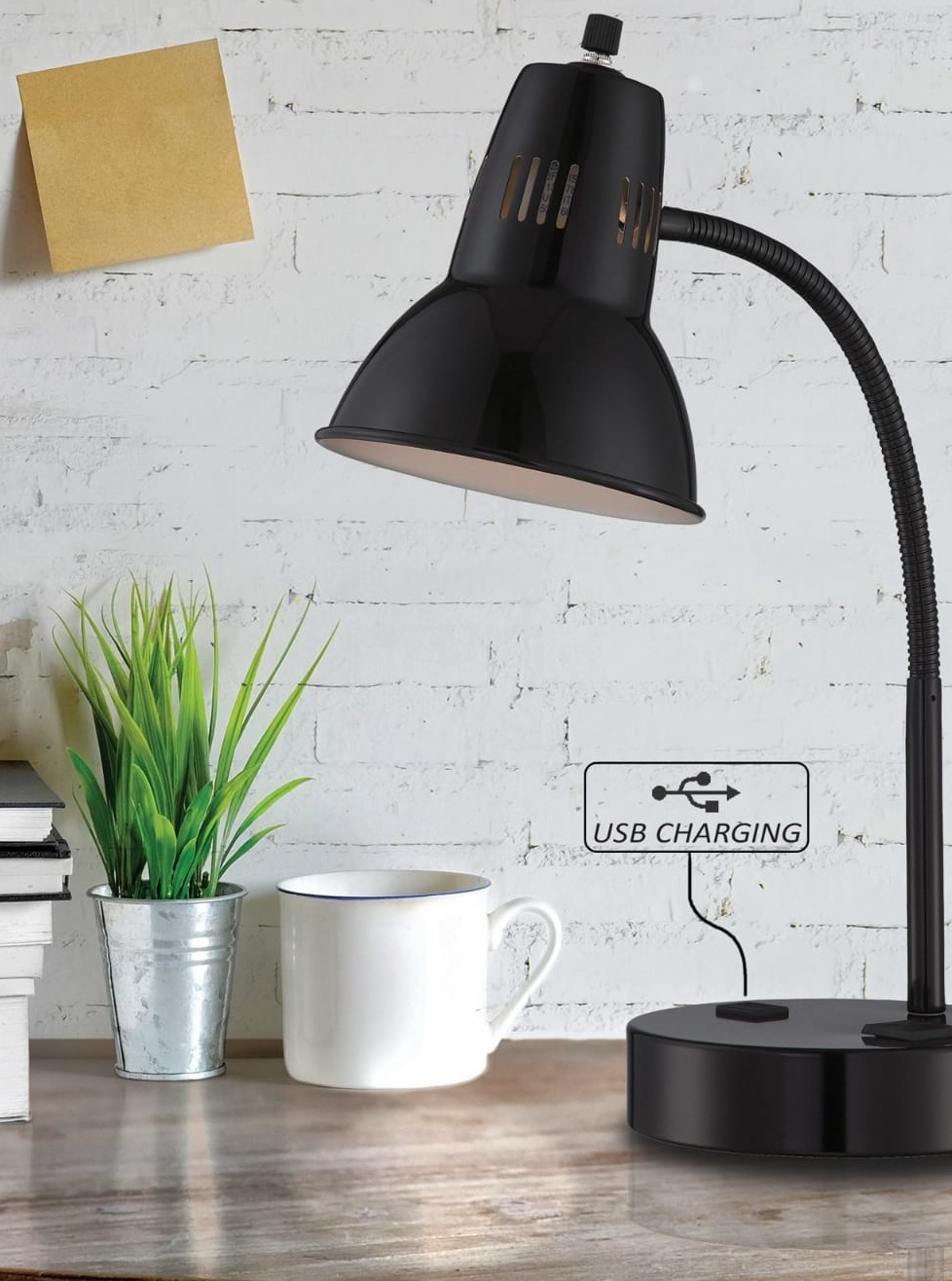}
        \caption{$f=v$}
        \label{fig:lightlinear_1}
    \end{subfigure}
    \begin{subfigure}[b]{0.2\textwidth}
        \centering
        \fbox{\includegraphics[width=0.94\textwidth]{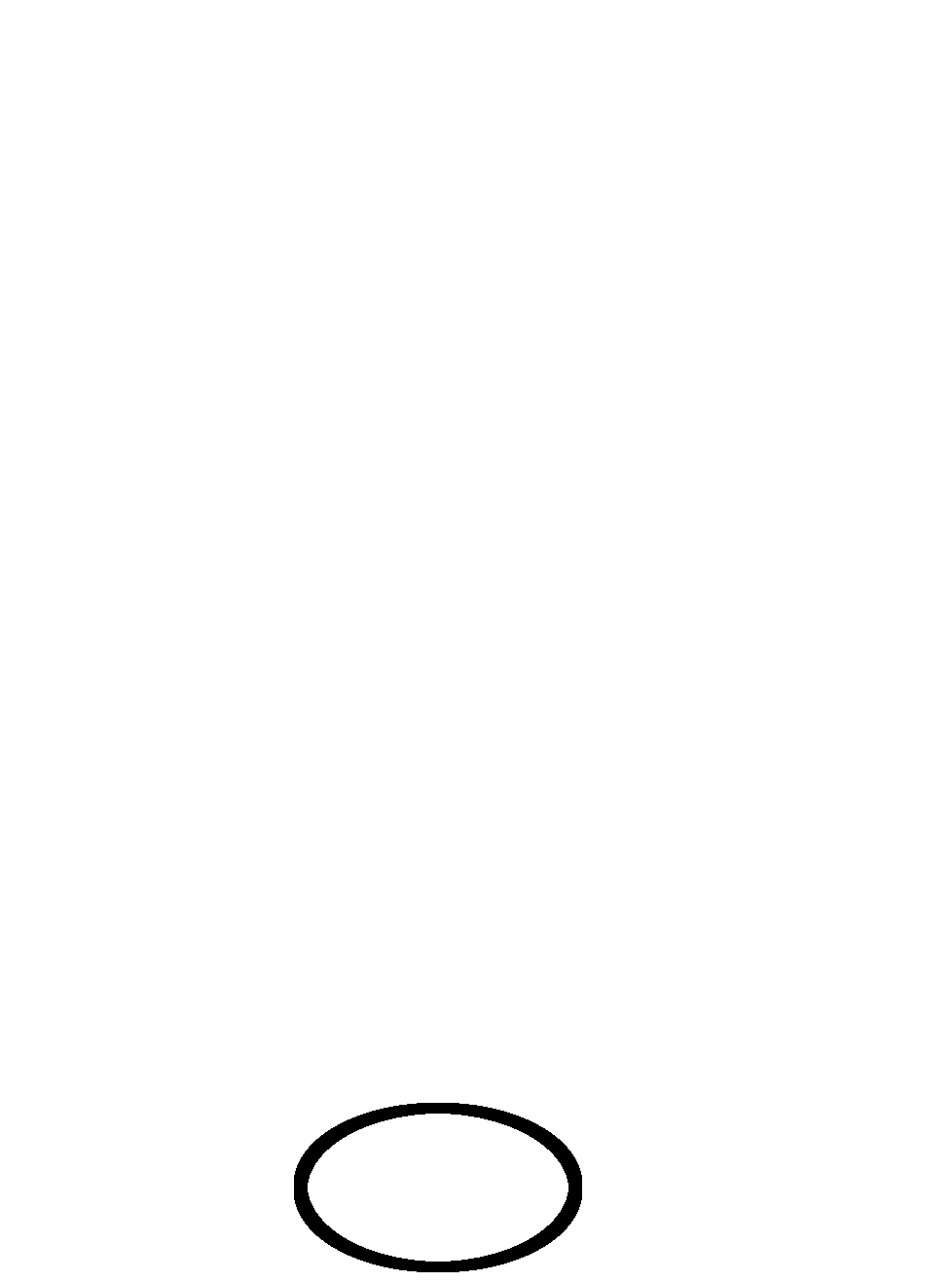}}
        \caption{mask $M$}
        \label{fig:lightlinear_2}
    \end{subfigure}
    \begin{subfigure}[b]{0.2\textwidth}
        \centering
        \includegraphics[width=\textwidth]{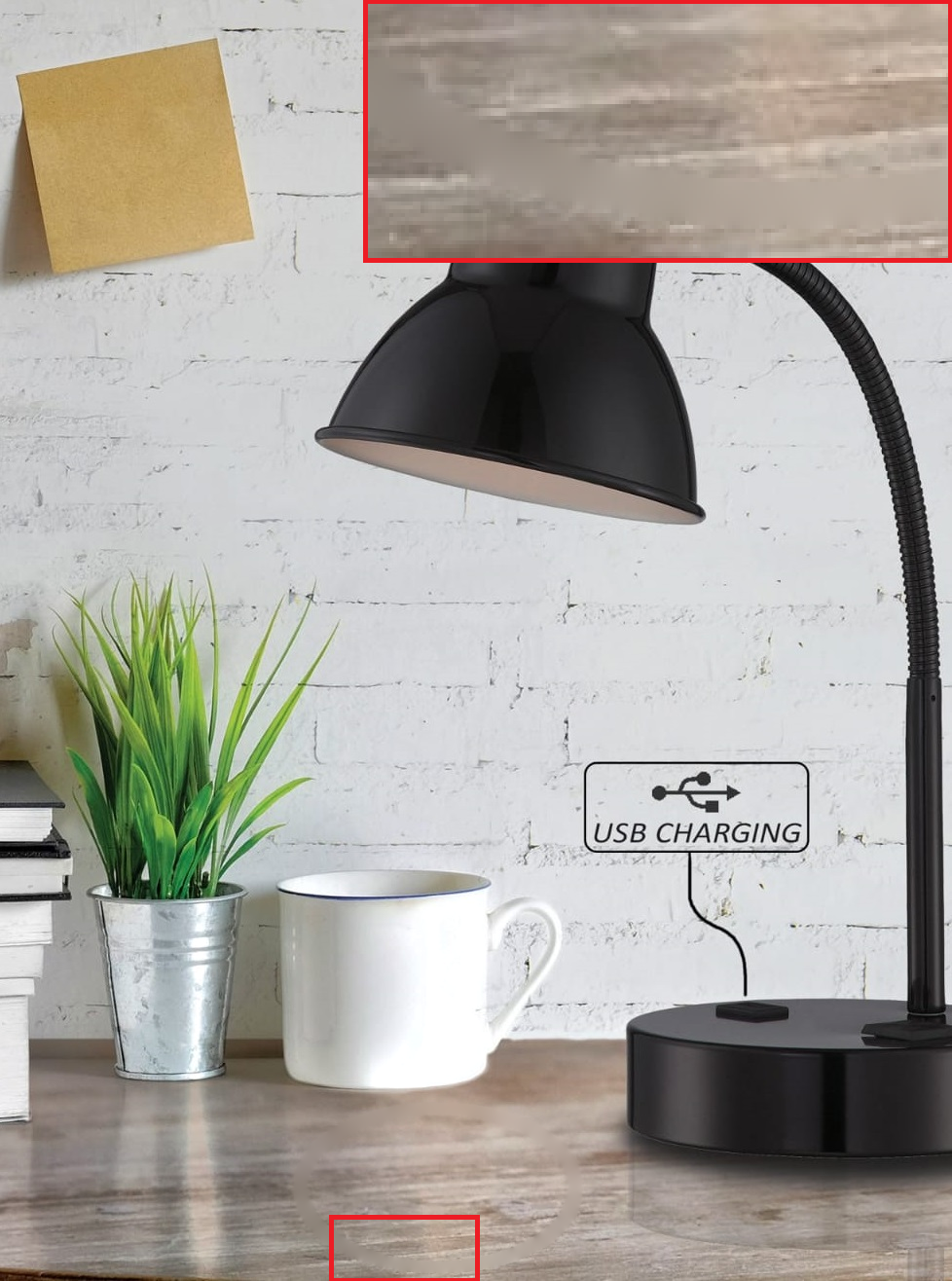}
        \caption{$w$ (linear)}
        \label{fig:lightlinear_3}
    \end{subfigure}
    \begin{subfigure}[b]{0.2\textwidth}
        \centering
        \includegraphics[width=\textwidth]{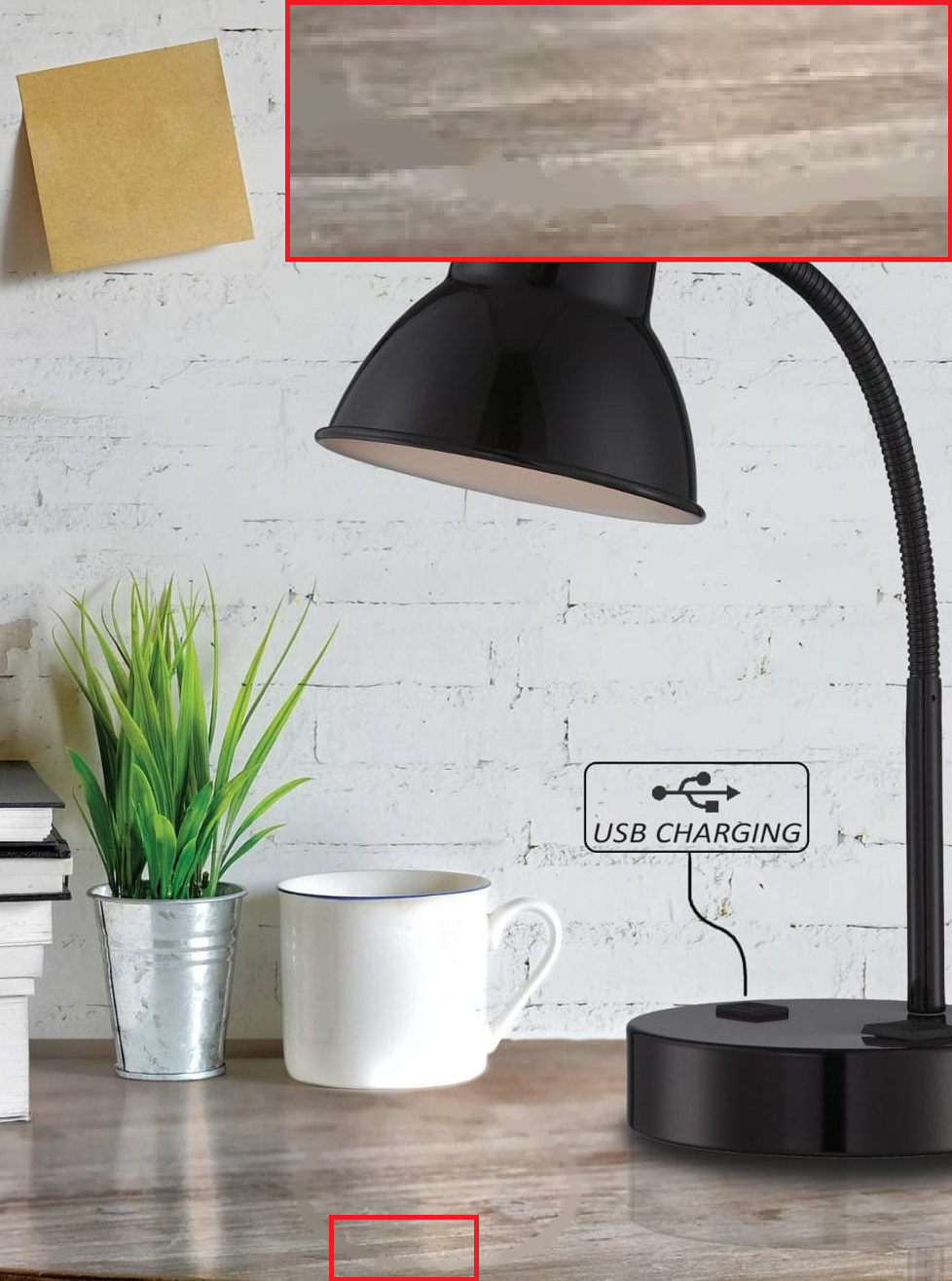}
        \caption{$w$ (non-linear)}
        \label{fig:lightlinear_4}
    \end{subfigure}
    \caption{Spot-light removal using linear and non-linear (proposed) osmosis filtering \eqref{eq:linear}, with zoom.}
    \label{fig:lightlinear}
\end{figure}

\paragraph{Compact image representation} The task of \textit{compact image} (or, generally, \emph{data}) \emph{representation} consists in representing a given image with as little information as possible using some limited, but significant, image content (see, e.g. \cite{Carlsson1988SketchBC}, \cite{Mainberger2011EdgebasedCO}). 
Such problem has been previously considered in \cite{weickert-ssvm} for the linear osmosis model: there, a greyscale image is reconstructed by osmosis evolution by starting from the  intensity values on pre-detected edges which is encoded in the definition of the drift-term $\dd$ so that:
\begin{align}\label{eq:cdr_drift}
    \dd := \left(\frac{\nabla v}{v} \chi_{\Omega_b}  \right)(x)= \begin{cases}
    \frac{\nabla v(x)}{v(x)} \quad & \text{if }x\in \Omega_b,\\
    0  &\text{if }x\in \Omega \setminus \Omega_b,
    \end{cases}
\end{align}
where $\Omega_b$ denotes here the set of the edges.
The task is thus a sort of interpolation process where starting from $f$ (Fig. \ref{fig:cdr_linear_1}) and using only the data assigned on the mask $M$ (Fig. \ref{fig:cdr_linear_2}) to define the drift term as in \eqref{eq:cdr_drift}, the osmosis evolution produces the result in Fig. \ref{fig:cdr_linear_3}. Fig. \ref{fig:cdr_linear_4} is the ground-truth image, reported here for comparison.

\begin{figure}[h!]
    \centering
    \begin{subfigure}[b]{0.2\textwidth}
        \centering
        \includegraphics[width=\textwidth]{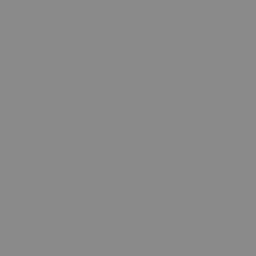}
        \caption{$f=$ const}
        \label{fig:cdr_linear_1}
    \end{subfigure}
    \begin{subfigure}[b]{0.2\textwidth}
        \centering
        \fbox{\includegraphics[width=0.92\textwidth]{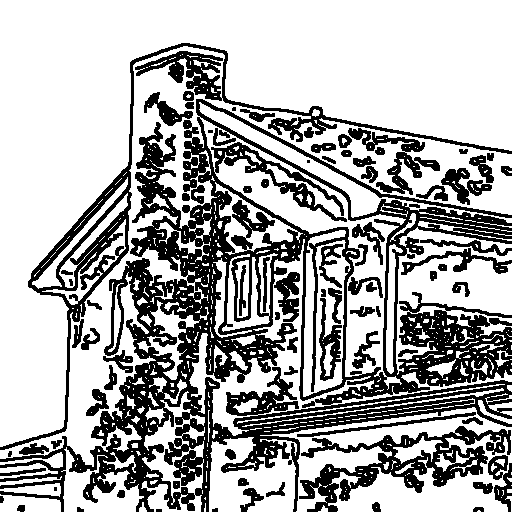}}
        \caption{mask $M$}
        \label{fig:cdr_linear_2}
    \end{subfigure}
    \begin{subfigure}[b]{0.2\textwidth}
        \centering
        \includegraphics[width=\textwidth]{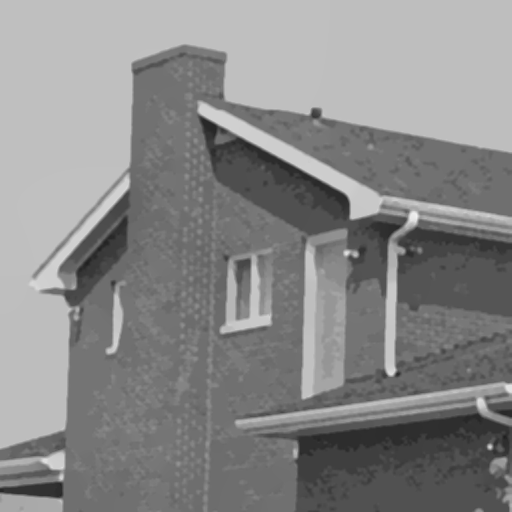}
        \caption{$w$ (non-linear)}
        \label{fig:cdr_linear_3}
    \end{subfigure}
    \begin{subfigure}[b]{0.2\textwidth}
        \centering
        \includegraphics[width=\textwidth]{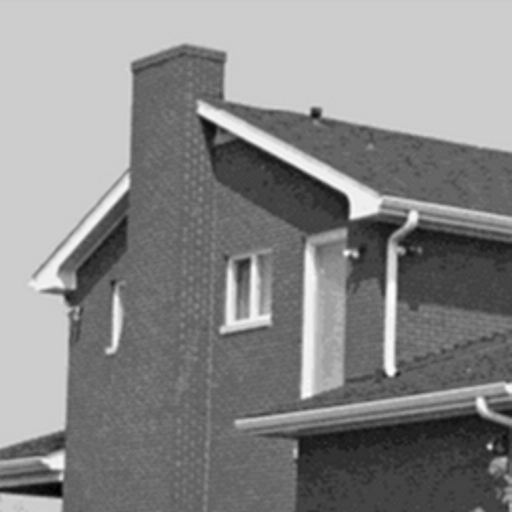} 
        \caption{Reference $v$}
        \label{fig:cdr_linear_4}
    \end{subfigure}
    \caption{Compact data representation using the proposed non-linear osmosis model.}
    \label{fig:cdr_linear}
\end{figure}

\section{Analysis of the continuous model}\label{sec:model}

Given a rectangular image domain $\Omega \subset \R^2$ with boundary $\partial \Omega$ and a finite time $T>0$,
let $\dd:\Omega\to\R^2$ be a given drift
vector field, 
$f \in L^\infty(\Omega;\R_{+}^*)$  a positive greyscale image (the extension to the RGB case is straightforward)
and $g:\Omega\times (0,T]\to \R_{+}^*$ be a positive (non-linear) diffusivity function driving the evolution process. The proposed non-linear osmosis model is given by the following drift-diffusion PDEs:
\begin{equation}
    \partial_t u(x,t) = \dv \left( g(x,t) \, \left(\nabla u(x,t) - \dd(x) u(x,t)\right)\right) \quad  \text{for }(x,t) \in \Omega \times (0,T], \label{eq:g-osmosis} 
\end{equation}
which we endow with homogenous Neumann boundary conditions and initial condition
\begin{align}
\label{eq:g-osmosis1}
    \begin{cases}
    \left< \nabla u - \dd u , \nbold \right> = 0 & \text{on } \partial \Omega \times (0,T];
    \\
    u(x,0) = f(x) & \text{on } \Omega .  
    \end{cases}
\end{align}

The model \eqref{eq:g-osmosis} favors diffusion and transport according to the function $g(\cdot)$.
The following result shows that thanks to its divergence form, model \eqref{eq:g-osmosis} enjoys standard conservation properties for any $g$.

\begin{prop}[Conservation properties]\label{prop:properties}
Any solution $u:\Omega\times (0,T] \rightarrow\R$ of the non-linear osmosis process \eqref{eq:g-osmosis} satisfies the following properties:
\begin{enumerate}
    \item The average grey value is preserved:
    \begin{equation}
        \frac{1}{|\Omega|} \int_\Omega u(x,t) dx = \frac{1}{|\Omega|} \int_\Omega f(x) dx =: \mu(f) \qquad \forall t>0;
    \end{equation}
    \item The evolution preserves non-negativity:
    \begin{equation}
        u(x,t)\geq 0 \quad \forall x\in\Omega, \forall t>0.
    \end{equation}
\end{enumerate}
\end{prop}
\begin{proof}
1) Define, for all $t\in(0,T]$, $m(t) := ( \int_\Omega u(x,t)dx ) / |\Omega|$. Then, by \eqref{eq:g-osmosis},
\begin{align*}
    \frac{dm}{dt} &= \frac{1}{|\Omega|} \int_\Omega \partial_t u(x,t) dx = \frac{1}{|\Omega|}\int_\Omega \dv \left( g(x,t)(\nabla u(x,t) - \dd(x) u(x,t)) \right)  dx = \\
    &= \frac{1}{|\Omega|} \int_{\partial\Omega} \left< g(s,t)(\nabla u(s,t) - \dd(s) u(s,t)) , \nbold(s) \right> ds = 0,\qquad \forall t>0
\end{align*}
thanks to the divergence theorem and by imposing Neumann boundary conditions.

2) Let now $\tau>0$ be the smallest time such that $\min_x u(x,t) = 0$. Suppose that this minimum is obtained at a $\xi \in \text{int}(\Omega)$. Then $u(\xi,\tau)=0$ and $\nabla u(\xi,\tau)=0$. Computing the time derivative of $u$ at point $(\xi,\tau)$ gives:
\begin{align*}
    \partial_t u(\xi, \tau) &= \left( \dv ( g(\cdot,\cdot)(\nabla u(\cdot,\cdot) - \dd(\cdot) u(\cdot,\cdot)) \right) (\xi,\tau) = \\
    &= g(\xi,\tau) \Delta u(\xi,\tau) + \left< \nabla u (\xi,\tau), \nabla g(\xi,\tau) \right> - \dv (\dd(\cdot) g(\cdot,\cdot) u(\cdot,\cdot)) (\xi,\tau) = \\
    &= g(\xi,\tau) \Delta u(\xi,\tau) + 0 - (\dv~\dd(\cdot))(\xi,\tau)\cdot (u(\xi,\tau)g(\xi,\tau)) - \left< \dd(\cdot),\nabla(g(\cdot,\cdot)u(\cdot,\cdot)) \right>(\xi,\tau) = \\
    &= g(\xi,\tau) \Delta u(\xi,\tau) - 0 - \left< \dd(\cdot),\nabla u(\cdot,\cdot)~g(\cdot,\cdot) + u(\cdot,\cdot)\nabla g(\cdot,\cdot) \right>(\xi,\tau) = \\
    & = g(\xi,\tau) \Delta u(\xi,\tau) - 0 = g(\xi,\tau) \Delta u(\xi,\tau),
\end{align*}
by standard properties of the divergence operator.
Then, at $(\xi,\tau)$ the evolution behaves as the diffusion equation $u_t = g\Delta u$, where $g$ is positive. Hence, the operator $\mathcal{L}:= g\Delta$ is elliptic so the standard minimum/maximum principle can be applied.
This tells us that for any $t \geq \tau$ the solution of the non-linear model remains non-negative, as desired.
\end{proof}

We now propose a special choice of the diffusivity function $g: \Omega  \times (0,T] \to \R_{+}^*$ and set for all $x\in\Omega$ and $t\in(0,T]$:
\begin{equation}\label{eq:g_def}
   g(x,t):= g( |\sbold(x,t)|):=\frac{1}{| \sbold (x,t)|}, \quad  \text{with } \;  \sbold(x,t):=\nabla u(x,t) - \dd(x) u(x,t),
\end{equation}
which will be used to characterise the steady states of \eqref{eq:g-osmosis} as minimisers of a suitable energy functional. 
To show that, we first need the following lemma.
\begin{lemma}\label{lemma:iff}
Let $\Omega$ be an open rectangular domain in $\R^2$ and $\mathbf{w}\in C^1(\Omega,\R^2)\cap C^0(\overline{\Omega},\R^2)$. Then:
\begin{equation}\label{eq:w_diff}
    \int_{\partial\Omega} \left<\mathbf{w},\nbold\right> \, z\;ds - \int_\Omega \dv (\mathbf{w})\,z\;dx = 0 \quad \forall \; z\in C^0(\overline{\Omega}) 
\end{equation}
if and only if
\begin{equation}\label{eq:w_pde}
    \begin{cases}
    \left< \mathbf{w},\nbold\right> = 0 \quad on \; \partial\Omega \\
    \dv(\mathbf{w}) = 0 \quad on \; \Omega.
    \end{cases}
\end{equation}
\end{lemma}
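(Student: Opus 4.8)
The backward implication is immediate: if \eqref{eq:w_pde} holds, then both the boundary integrand $\langle\mathbf{w},\nbold\rangle$ and the volume integrand $\dv(\mathbf{w})$ vanish identically, so \eqref{eq:w_diff} holds trivially for every $z$. The content of the lemma is the forward direction, which I would establish by a two-stage localisation argument of the type underlying the fundamental lemma of the calculus of variations.

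First I would isolate the interior condition. Restricting the class of test functions to those $z\in C^0(\overline{\Omega})$ that are compactly supported inside $\Omega$ (extended by zero to the boundary), the boundary integral in \eqref{eq:w_diff} drops out and we are left with $\int_\Omega \dv(\mathbf{w})\,z\,dx = 0$ for all such $z$. Since $\mathbf{w}\in C^1(\Omega,\R^2)$, the integrand $\dv(\mathbf{w})$ is continuous; a standard bump-function argument (if $\dv(\mathbf{w})(x_0)\neq 0$ at some interior point, then by continuity it keeps its sign on a small ball, and a nonnegative $z$ supported there produces a nonzero integral) forces $\dv(\mathbf{w})=0$ throughout $\Omega$, which is the second line of \eqref{eq:w_pde}.

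With $\dv(\mathbf{w})=0$ now known, the volume term in \eqref{eq:w_diff} vanishes for every $z$, and the hypothesis reduces to $\int_{\partial\Omega}\langle\mathbf{w},\nbold\rangle\,z\,ds = 0$ for all $z\in C^0(\overline{\Omega})$. To conclude $\langle\mathbf{w},\nbold\rangle=0$ on $\partial\Omega$ I would exploit that the restriction map $z\mapsto z|_{\partial\Omega}$ sends $C^0(\overline{\Omega})$ onto all of $C^0(\partial\Omega)$: given any continuous boundary datum, the Tietze extension theorem produces a continuous extension to $\overline{\Omega}$. Hence $\int_{\partial\Omega}\langle\mathbf{w},\nbold\rangle\,\varphi\,ds = 0$ for every $\varphi\in C^0(\partial\Omega)$, and the same fundamental-lemma reasoning, now carried out along the one-dimensional boundary with respect to arc length $ds$, yields $\langle\mathbf{w},\nbold\rangle=0$.

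The one genuinely delicate point is the geometry of the rectangular boundary: the outward normal $\nbold$ is only piecewise constant and is undefined at the four corners, so $\langle\mathbf{w},\nbold\rangle$ is continuous on each open edge but not globally. I would therefore apply the boundary localisation edge by edge, concluding $\langle\mathbf{w},\nbold\rangle=0$ on each open side; since the corner set has zero $ds$-measure this suffices, and it is exactly the sense in which the Neumann condition in \eqref{eq:w_pde} is to be read. I expect this bookkeeping around the corners, together with justifying the surjectivity of the trace onto $C^0(\partial\Omega)$, to be the main technical obstacle, the remaining steps being routine.
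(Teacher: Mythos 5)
Your proposal is correct and follows essentially the same route as the paper: a two-stage localisation (bump-function) argument in the spirit of the fundamental lemma of the calculus of variations, first forcing $\dv(\mathbf{w})=0$ in the interior, then reducing the hypothesis to the boundary integral and localising there to kill $\left<\mathbf{w},\nbold\right>$. The only differences are cosmetic --- the paper phrases each stage as a contradiction, adding a smooth bump $\Phi$ to an arbitrary test function $z$ rather than testing with the bump directly, and it does not explicitly address the corner points of the rectangle or the surjectivity of the trace map onto $C^0(\partial\Omega)$, both of which you treat more carefully.
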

\begin{proof}
Eq. \eqref{eq:w_pde} implies eq. \eqref{eq:w_diff} obviously. For the inverse implication, consider the two alternative cases: either $\dv(\mathbf{w})(x)=0$ for all $x\in\Omega$, or there exists a point $x_0\in\Omega$ s.t. $\dv(\mathbf{w})(x_0)\neq 0$.
We prove by contradiction that the second case is not possible. Suppose that $\dv(\mathbf{w})(x_0)>0$. Since $\mathbf{w}\in C^1(\Omega;\R^2)$, then there exists a neighbourhood $B(x_0,r)\subset\Omega$ s.t. $\dv(\mathbf{w})(x)>0$ for all $x\in B(x_0,r)$. Now, let $z\in C^0(\overline{\Omega})$ be a function that satisfies \eqref{eq:w_diff}. Consider 
$$ z^* := z+\Phi(\chi_{B(x_0,r)}) \in C^0(\overline{\Omega})$$
where $\Phi(\chi_{B(x_0,r)})$ is a non-negative smooth regularization of $\chi_{B(x_0,r)}$, with support in $B(x_0,r)$. Then, we have
\begin{align*}
    \int_{\partial\Omega} \left<\mathbf{w},\nbold\right> \, z^*\;&ds - \int_\Omega \dv(\mathbf{w})\,z^*\;dx = \\
    &=\int_{\partial\Omega} \left<\mathbf{w},\nbold\right> \, z\;ds - \int_\Omega \dv(\mathbf{w})\,z\;dx - \int_\Omega \dv(\mathbf{w}) \Phi(\chi_{B(x_0,r)}) \, dx = \\
    & =  0 - \int_\Omega \dv(\mathbf{w}) \Phi(\chi_{B(x_0,r)}) \, dx =
    \\
    & =  - \int_{B(x_0,r)} \dv(\mathbf{w}) \Phi(\chi_{B(x_0,r)}) \, dx < 0 
\end{align*}
because $supp(\Phi(\chi_{B(x_0,r)}))\subset B(x_0,r)$, $\Phi(x)\geq 0$ and $\dv(\mathbf{w})>0$ in $B(x_0,r)$. We have thus reached a contradiction, as for $z^*\in C^0(\overline{\Omega})$ equation \eqref{eq:w_diff} is not verified. Therefore, it must be $\dv(\mathbf{w})(x)=0$ for all $x\in\Omega$. We claim that this implies also that
\begin{equation}\label{eq:wn}
   \int_{\partial\Omega} \left<\mathbf{w},\nbold\right> \, z\;ds = 0 \quad \forall \; z\in C^0(\overline{\Omega},\R).
\end{equation}

By contradiction, consider a point $x_1\in\partial\Omega$ s.t. $\left<\mathbf{w},\nbold\right>(x_1)\neq 0$ (suppose positive). Since $\mathbf{w}\in C^0(\overline{\Omega};\R^2)$, there exists $B(x_1,r)$ s.t. $\left<\mathbf{w},\nbold\right>(x)>0$ for all $x\in B(x_1,r)\cap \partial\Omega$. Let now $z\in C^0(\overline{\Omega})$ be a function satisfying \eqref{eq:wn} and define $z^*:=z+\tilde{\Phi}(\chi_{B(x_1,r)}) $,
where $\tilde{\Phi}(\cdot)$ is as a smooth regularisation of $\chi_{B(x_1,r)}$ with support in $B(x_1,r)$.
Then,
\begin{align*}
    \int_{\partial\Omega} \left<\mathbf{w},\nbold\right> \, z^*\;ds &= \int_{\partial\Omega} \left<\mathbf{w},\nbold\right> \, z\;ds + \int_{\partial\Omega} \left<\mathbf{w},\nbold\right> \, \Phi(\chi_{B(x_1,r)}) \;ds = \\
    & =  0 + \int_{\partial\Omega} \left<\mathbf{w},\nbold\right> \, \Phi(\chi_{B(x_1,r)}) \; ds =
    \\
    & = \int_{B(x_1,r)\cap\partial\Omega} \left<\mathbf{w},\nbold\right> \, \Phi(\chi_{B(x_1,r)}) \;ds > 0
\end{align*}
because $supp(\Phi(\chi_{B(x_1,r)}))\cap\partial\Omega\subset B(x_1,r)\cap\partial\Omega$, $\Phi(x)\geq 0$ and $\left<\mathbf{w},\nbold\right>(x)>0$ in $B(x_1,r)\cap\partial\Omega$. 

We thus built a function $z^*$ that does not satisfy \eqref{eq:wn}, which is of course a contradiction. Therefore, $\left<\mathbf{w},\nbold\right>(x)=0$ for all $x\in\partial\Omega$.
\end{proof}

Thanks to this lemma, we are now able to interpret the non-linear PDE model \eqref{eq:g-osmosis} as the gradient flow of a suitably defined energy functional $E(u)$ in the special case when the  diffusivity function $g$ is defined as in \eqref{eq:g_def}.

\begin{prop}\label{prop:energy}
Let $v\in H^1(\Omega;\R_+^*)$ be a  given reference image, $\dd=\nabla(\log v)$ the canonical drift vector field associated to $v$ and $g$ be defined as in \eqref{eq:g_def}.
Then, a function $u^*: \Omega\times (0,T]\to \R_+$ satisfies the following steady state equation
    \begin{equation}\label{eq:sse}
        \dv \left(  g( | \nabla u^* -\dd u^*|) \left(\nabla u^* - \dd u^*\right)\right) = 0
    \end{equation}
    with boundary condition $ \left<\nabla u^* - \dd u^*,\nbold\right>=0 $,
    if and only if $u^*$ is a stationary point of the energy functional
    \begin{equation}\label{eq:E1}
        E(u) := \int_\Omega \left|\nabla\left(\frac{u}{v}\right)(x)\right| dx.
    \end{equation}
    Moreover, $u^*$ is a multiplicative rescaling of  $v$ where the rescaling constant is the ratio between the average gray value of $f=u(\cdot,0)$ and $v$, i.e.:
    $$
    u^*(x)=\frac{\mu(f)}{\mu(v)} v(x),\quad\forall x\in\Omega.
    $$
\end{prop}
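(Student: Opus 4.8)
The plan is to prove the equivalence in \eqref{eq:sse} by rewriting both the PDE and the energy functional in terms of the auxiliary variable $q := u/v$, which turns the problem into a statement about the minimiser of a weighted total-variation energy. First I would observe that with $\dd=\nabla(\log v)=\nabla v/v$, a direct computation gives $\sbold = \nabla u - \dd u = v\,\nabla(u/v) = v\,\nabla q$, so that $g(|\sbold|)\,\sbold = \frac{1}{|v\nabla q|}\,v\,\nabla q = \frac{\nabla q}{|\nabla q|}$ (assuming $v>0$). Hence the steady-state equation \eqref{eq:sse} becomes $\dv\!\left(\nabla q/|\nabla q|\right)=0$ with boundary condition $\left<v\,\nabla q,\nbold\right>=0$, i.e. $\left<\nabla q,\nbold\right>=0$ since $v>0$. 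This is precisely the Euler--Lagrange equation of $E(u)=\int_\Omega |\nabla q|\,dx$.

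Next I would establish the \emph{if and only if} rigorously using Lemma \ref{lemma:iff}. The first variation of $E$ along an admissible perturbation $u\mapsto u+\varepsilon\phi$ is $\left.\frac{d}{d\varepsilon}E(u+\varepsilon\phi)\right|_{\varepsilon=0} = \int_\Omega \left<\frac{\nabla q}{|\nabla q|}, \nabla(\phi/v)\right> dx$. Applying the divergence theorem (integration by parts) to move the gradient off the test function $z:=\phi/v$, this variation equals $\int_{\partial\Omega}\left<\mathbf{w},\nbold\right> z\,ds - \int_\Omega \dv(\mathbf{w})\,z\,dx$ with $\mathbf{w}:=\frac{\nabla q}{|\nabla q|}=g(|\sbold|)\sbold$. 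Since $v$ is bounded away from zero and the admissible $\phi$ range over a sufficiently rich class, $z$ ranges over all of $C^0(\overline{\Omega})$ (up to density), so $u^*$ is a stationary point of $E$ exactly when this expression vanishes for all such $z$. By Lemma \ref{lemma:iff} this happens if and only if $\dv(\mathbf{w})=0$ in $\Omega$ and $\left<\mathbf{w},\nbold\right>=0$ on $\partial\Omega$, which are precisely \eqref{eq:sse} and its boundary condition. This closes the equivalence.

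For the final identification of $u^*$, I would argue that $\dv(\nabla q/|\nabla q|)=0$ with the Neumann condition forces $q=u^*/v$ to be constant. The cleanest route is the variational one: $E(u)\geq 0$ with equality if and only if $\nabla q\equiv 0$, i.e. $q$ constant; since $u^*$ is a stationary point and $E$ is convex, $u^*$ realises the minimum $E=0$, giving $u^*=c\,v$ for some constant $c>0$. To pin down $c$, I would invoke the mass-conservation property from Proposition \ref{prop:properties}(1): integrating $u^*=cv$ over $\Omega$ gives $\mu(u^*)=c\,\mu(v)$, and since the average grey value is preserved along the evolution, $\mu(u^*)=\mu(f)$, whence $c=\mu(f)/\mu(v)$.

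The main obstacle I anticipate is the analytic subtlety hidden in the factor $1/|\nabla q|$: the energy $E$ is the (weighted) total variation, which is non-smooth precisely where $\nabla q=0$, so the pointwise manipulations above are only formally valid. A fully rigorous treatment would require interpreting \eqref{eq:sse} in a weak/subdifferential sense and handling the degeneracy of $g(|\sbold|)=1/|\sbold|$ where $\sbold$ vanishes (in practice this is regularised, e.g. by $1/\sqrt{|\sbold|^2+\varepsilon^2}$, as the numerics later suggest). For the purposes of this proposition I would work at the formal Euler--Lagrange level, assuming $u^*$ is smooth with $\nabla q$ nonvanishing where needed, and rely on Lemma \ref{lemma:iff} to convert the vanishing of the first variation into the stated PDE and boundary condition; the constant-$q$ conclusion then follows from convexity and the non-negativity of $E$ together with mass conservation.
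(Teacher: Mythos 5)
Your proof is correct and, for the equivalence part, follows essentially the same route as the paper: compute the first variation of $E$, integrate by parts to obtain the boundary-plus-bulk expression $\int_{\partial\Omega}\left<\mathbf{w},\nbold\right> z\,ds-\int_\Omega \dv(\mathbf{w})\,z\,dx$ with $\mathbf{w}=g(|\sbold|)\sbold$, and invoke Lemma \ref{lemma:iff} to pass between ``first variation vanishes for all admissible $z$'' and the PDE with its no-flux boundary condition. Your explicit substitution $q=u/v$ is only a notational repackaging of the identity $\frac{\nabla(u/v)}{|\nabla(u/v)|}=\frac{\nabla u-\dd u}{|\nabla u-\dd u|}$ that the paper uses inline. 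Where you genuinely diverge is in the final identification $u^*=c\,v$: the paper merely \emph{verifies} that every $u^*=cv$ is a stationary point (hence a steady state) and then fixes $c$ by mass conservation, which strictly speaking only proves sufficiency, not that an arbitrary steady state must have this form. Your argument — $E$ is convex and non-negative, so any stationary point is a global minimiser, forcing $E(u^*)=0$, hence $\nabla(u^*/v)\equiv 0$ and $u^*/v$ constant on the connected domain — proves the necessity direction that the statement of the proposition actually asserts, and is therefore a stronger (and arguably more faithful) treatment of the ``moreover'' clause; the constant is then pinned down by conservation exactly as in the paper. The only caveat, which you flag yourself, is that ``stationary point implies global minimiser'' for a convex functional needs the stationarity to be read in the subdifferential sense at points where $\nabla(u/v)$ vanishes; at the formal level adopted by the paper (which silently sets $0/|0|:=0$) this is the same degree of rigour, so your proposal stands on equal footing there.
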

\begin{proof}
Conventionally, for $x\in\Omega$ and $t\in(0,T]$ such that $\nabla u(x,t) - \dd(x) u(x,t)=0$, we set
$$ \frac{\nabla u(x,t) - \dd(x) u(x,t)}{|\nabla u(x,t) - \dd(x) u(x,t)|}=0. $$
Let us now consider any test function $z\in C^\infty_c(\Omega;R_+^*)$. We have
\begin{align}\label{eq:dt_E}
    \left(\frac{d}{d\eta}E(u+\eta z)\right)& _{\big|{\eta=0}} = \left( \int_\Omega \frac{d}{d\eta} \left|\nabla\left(\frac{u+\eta z}{v}\right)\right| dx \right) _{\big|{\eta=0}} = \nonumber \\
    &= \left( \int_\Omega \left< \frac{\nabla(\frac{u+\eta z}{v})}{|\nabla(\frac{u+\eta z}{v})|} , \nabla \left(\frac{z}{v}\right)\right> dx \right)_{\big|{\eta=0}} = \nonumber \\
    &=  \int_\Omega \left< \frac{\nabla(\frac{u}{v})}{|\nabla(\frac{u}{v})|} , \nabla \left(\frac{z}{v}\right)\right> dx  = \nonumber \\
    &=  \int_\Omega \left< \frac{\nabla u - \dd u}{|\nabla u - \dd u|} , \nabla\left( \frac{z}{v}\right)\right> dx  = \nonumber \\
    & = \int_\Omega \left[ \dv\left( \frac{\nabla u - \dd u}{|\nabla u - \dd u|}\cdot \frac{z}{v} \right) - \frac{z}{v}\, \dv \left( \frac{\nabla u - \dd u}{|\nabla u - \dd u|} \right) \right] dx = \nonumber \\
    &= \int_{\partial\Omega} \left< \frac{\nabla u - \dd u}{|\nabla u - \dd u|} \cdot \frac{z}{v}, \nbold \right> ds - \int_\Omega \frac{z}{v}\, \dv \left( \frac{\nabla u - \dd u}{|\nabla u - \dd u|} \right) dx 
\end{align}
The function $u^*$ satisfying \eqref{eq:sse} and the homogeneous Neumann boundary condition is thus a stationary point of the energy $E(u)$, i.e. \eqref{eq:dt_E} vanishes at $u^*$ and the reverse holds thanks to Lemma \ref{lemma:iff}.

To conclude the proof, we have that it is easy to observe that every image $u^*=cv$, with $c\in\R_+^*$, is a stationary point of the energy $E(\cdot)$. Hence, such functions are solutions of the steady-state equation \eqref{eq:sse}.
Since the osmosis evolution preserves the average grey value by Proposition \ref{prop:properties}, one can further show that:
\[
\mu(f) = \mu(u^*) = \frac{1}{|\Omega|}\int_{\Omega} c v(x) dx = c \mu(v),
\]
whence $c = \mu(f)/\mu(v) >0$.
\end{proof}

An analogous result can be proved as a corollary of the previous proposition by allowing the further dependence on a smoothing parameter $p\in[1,2)$. In Section \ref{sec:results}, we will validate our model for different values of $p$, evaluating the practical effects of its choice on exemplar tests.

\begin{cor}\label{cor:th}
Under the same assumptions of Proposition \ref{prop:energy} and defining for $p\in[1,2)$ the diffusivity term $g_p$ as
\begin{equation}\label{eq:gp}
g_p(x,t):=g_p(|\mathbf{s}(x,t)|):=\frac{1}{|\mathbf{s}(x,t)|^{p}},\qquad \forall (x,t)\in\Omega\times (0,T], 
\end{equation}
then the steady-state of the energy functional 
\begin{equation}\label{eq:Ep}
    E_p(u) := \int_\Omega \frac{v^{1-p}}{2-p}\left|\nabla\left(\frac{u}{v}\right)(x)\right|^{2-p} \; dx
\end{equation}
satisfies:
    \begin{equation}\label{eq:p-osmosis}
        \dv \left(  g_p( | \nabla u^* -\dd u^*|) \left(\nabla u^* - \dd u^*\right)\right) = 0.
    \end{equation}
\end{cor}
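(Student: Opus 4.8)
The plan is to follow the same variational argument used in the proof of Proposition~\ref{prop:energy}, exploiting the fact that the canonical drift $\dd=\nabla(\log v)$ yields the identity
\[
\nabla\left(\frac{u}{v}\right) = \frac{\nabla u - \dd\,u}{v} = \frac{\mathbf{s}}{v},
\]
so that $|\nabla(u/v)| = |\mathbf{s}|/v$ since $v>0$. First I would rewrite the density of $E_p$ entirely in terms of $\mathbf{s}$ and $v$, which makes the subsequent substitutions transparent.

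Next I would compute the first variation $\frac{d}{d\eta}E_p(u+\eta z)|_{\eta=0}$ against a test function $z\in C_c^\infty(\Omega)$, perturbing $u/v$ by $\eta\,z/v$. Differentiating $|\nabla(u/v)|^{2-p}$ via the chain rule produces a factor $(2-p)\,|\nabla(u/v)|^{1-p}$ that cancels the prefactor $1/(2-p)$, leaving
\[
\left.\frac{d}{d\eta}E_p(u+\eta z)\right|_{\eta=0} = \int_\Omega v^{1-p}\,|\nabla(u/v)|^{-p}\left\langle \nabla\left(\frac{u}{v}\right),\, \nabla\left(\frac{z}{v}\right)\right\rangle dx.
\]
The key algebraic step is to substitute $\nabla(u/v)=\mathbf{s}/v$ and $|\nabla(u/v)|^{-p}=v^p/|\mathbf{s}|^p = v^p\,g_p(|\mathbf{s}|)$: the powers of $v$ collapse as $v^{1-p}\cdot v^p\cdot v^{-1}=1$, and the first variation reduces to exactly
\[
\int_\Omega \left\langle g_p(|\mathbf{s}|)\,\mathbf{s},\, \nabla\left(\frac{z}{v}\right)\right\rangle dx,
\]
which is structurally identical to the expression in \eqref{eq:dt_E}, with the unit vector $\mathbf{s}/|\mathbf{s}|$ replaced by $g_p(|\mathbf{s}|)\,\mathbf{s}$.

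From here the argument mirrors Proposition~\ref{prop:energy}: integrating by parts (divergence theorem) splits this integral into a boundary term $\int_{\partial\Omega}\langle g_p\mathbf{s},\nbold\rangle\,(z/v)\,ds$ and a bulk term $-\int_\Omega (z/v)\,\dv(g_p\mathbf{s})\,dx$. Applying Lemma~\ref{lemma:iff} with $\mathbf{w}:=g_p(|\mathbf{s}|)\,\mathbf{s}$, the vanishing of the first variation for all admissible $z$ is equivalent to $\dv\left(g_p(|\nabla u^*-\dd u^*|)(\nabla u^*-\dd u^*)\right)=0$ in $\Omega$ together with the natural condition $\langle g_p\mathbf{s},\nbold\rangle=0$ on $\partial\Omega$, which is precisely \eqref{eq:p-osmosis}.

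The main obstacle is the usual non-smoothness at points where $\mathbf{s}=0$: for $p\in[1,2)$ the exponent $2-p\in(0,1]$ renders $|\nabla(u/v)|^{2-p}$ non-differentiable there and $g_p=|\mathbf{s}|^{-p}$ singular. I would treat this exactly as in Proposition~\ref{prop:energy}, adopting the convention that the normalised field vanishes on $\{\mathbf{s}=0\}$ and performing the differentiation under the integral sign on its complement; since $2-p>0$ the energy density remains integrable, so no contribution is lost. I would also note, as in the proposition, that any rescaling $u^*=cv$ gives $\mathbf{s}=c\nabla v-(\nabla v/v)\,cv=0$ identically and hence solves \eqref{eq:p-osmosis} trivially, in agreement with the steady-state characterisation.
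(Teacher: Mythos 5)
Your proof is correct and takes essentially the same approach the paper intends: the paper gives no explicit proof of Corollary~\ref{cor:th}, stating only that it is an ``analogous result'' to Proposition~\ref{prop:energy}, and your computation carries out exactly that analogy. In particular, the key step — the power cancellation $v^{1-p}\cdot v^{p}\cdot v^{-1}=1$ after substituting $\nabla(u/v)=\mathbf{s}/v$, which reduces the first variation to $\int_\Omega \langle g_p(|\mathbf{s}|)\,\mathbf{s},\nabla(z/v)\rangle\,dx$, followed by integration by parts and Lemma~\ref{lemma:iff} — is precisely the $p$-dependent version of the chain of identities in \eqref{eq:dt_E}, and it correctly recovers Proposition~\ref{prop:energy} as the special case $p=1$.
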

A possible direction of future work consists in  investigating the $p$-osmosis model  \eqref{eq:p-osmosis} so as to extend the osmosis filtering to a spatially adaptive filtering, as proposed for image denoising, e.g., in \cite{p-laplace}.

\section{Model discretisation using finite difference schemes}\label{sec:discr}

In this section we introduce fast and stable computational
methods for the nonlinear osmosis PDE model \eqref{eq:g-osmosis} by rewriting it using the linearity of the divergence operator as follows
\begin{align}
\label{eq:sep}
    \partial_t u = \dv \left( g(u) \nabla u\right)  - \dv \left( g(u) \dd u\right),
\end{align}
along with the boundary conditions and initial condition $f\in L^{\infty}(\Omega;\R_+^*)$ as in \eqref{eq:g-osmosis1}.

We use Finite Difference Method (FDM)  to approximate the solution of \eqref{eq:sep} on a regular grid $\Omega_h\subset \Omega$ approximating the space domain $\Omega$ of the PDE, where $h>0$ denotes the spatial grid size along both horizontal and vertical directions.
We first apply a semi-discretisation in space on $\Omega_h$, and then a full discretisation in time on $(0,T]$ using both explicit and semi-implicit strategies, with the final objective of providing a numerical scheme which shows stability to any fixed time-step discretisation parameter.

\medskip

In the following, we thus assume $u\in\R^{m_x\times m_y}$ to be a discretised image defined on $\Omega_h$  of size $N:=m_x m_y$ and we denote by $u_{i,j}$ the value of $u$ at pixel $(i,j)$ for $i=1,\ldots,m_x$, $j=1,\ldots,m_y$. To avoid singularities appearing due to the special choice of the diffusivity function $g$ in \eqref{eq:g_def}, we will use an $\epsilon$-regularised version defined in terms of $0<\epsilon\ll 1$ 
as $g(s)=1/|\mathbf{s}|_{\epsilon}$ with $\mathbf{s}=\nabla u -\dd u$, with 
$$
 |\mathbf{s}|_{\epsilon}:=\sqrt{(s^x)^2+(s^y)^2+\epsilon}
$$
where $s^x$ and $s^y$ denote the horizontal and vertical components of $\mathbf{s}$, respectively, and the smoothing parameter $\epsilon>0$ ensures that $g(s)$ is well-defined at each point.
Considering the canonical drift term $\dd=\nabla \log v = \nabla v / v$, the value of $g$ at $(i,j)$ is approximated using central finite difference schemes for the discretization of the partial first-order derivatives by
\begin{align}\label{eq:g-discr}
g_{i,j} &= g(s_{ij})
\\
s_{ij} &= \left( \frac{u_{i+1,j}-u_{i-1,j}}{2h} - \frac{v_{i+1,j}-v_{i-1,j}}{2h} \cdot \frac{u_{i,j}}{v_{i,j}} \;,
 \frac{u_{i,j+1}-u_{i,j-1}}{2h} - \frac{v_{i,j+1}-v_{i,j-1}}{2h} \cdot \frac{u_{i,j}}{v_{i,j}} \right).  \notag
\end{align}

\begin{figure}[h]
    \centering
    \includegraphics[width=0.4\textwidth]{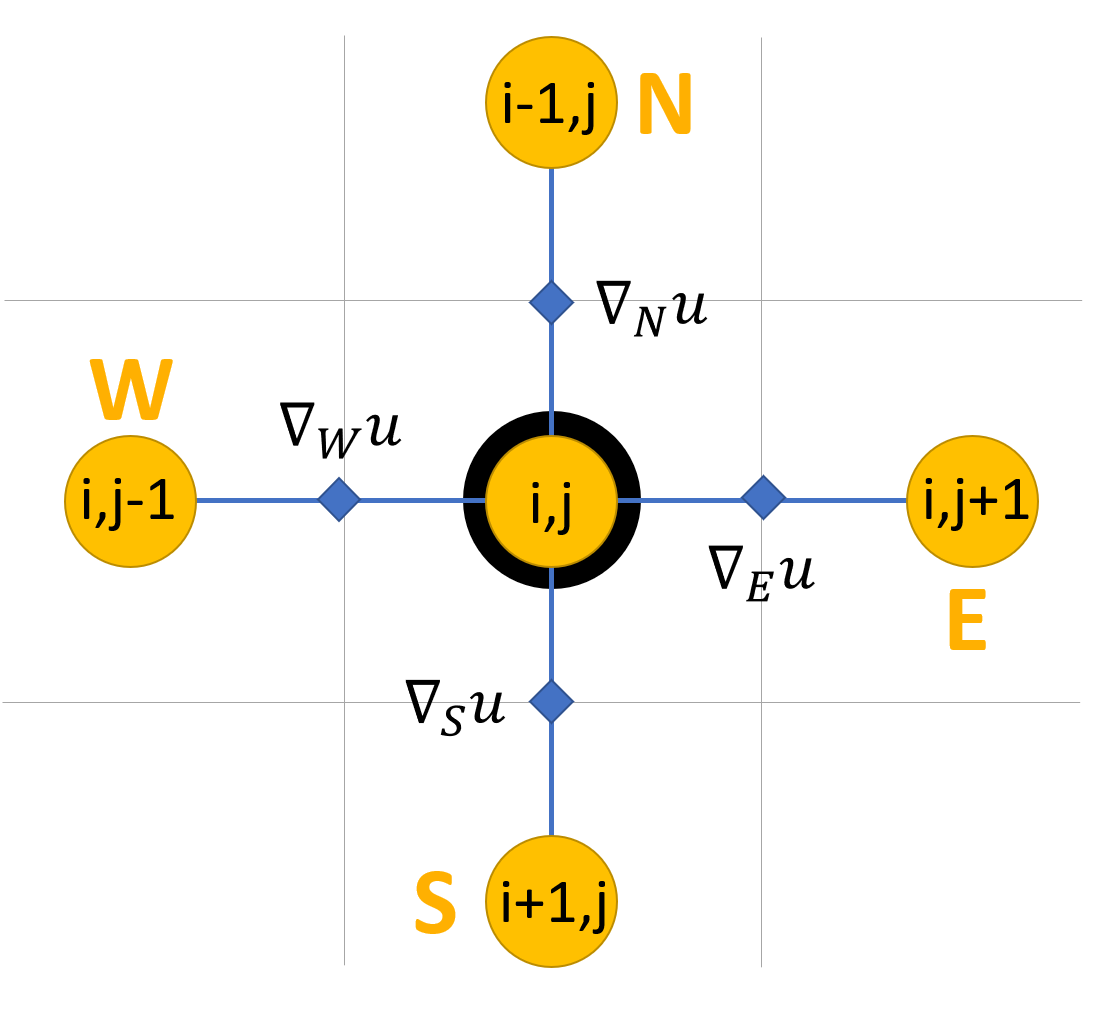}
     \caption{Finite difference stencil: the pixel neighbourhood of $(i,j)$ are denoted by south (S), north (N), east (E), west (W).}
    \label{fig:scheme}
\end{figure}

\subsection{Semi-discretisation in space}

The spatial differential operators in \eqref{eq:sep},
will be discretised by locating the values of $\nabla u$, $\dd =(d^x,d^y)$, $u$ and the $\epsilon$-regularised diffusivity function $g$ on the edges between two adjacent pixels, following the stencil illustrated in Fig. \ref{fig:scheme}. Hence, the semi-discretisation in space of \eqref{eq:sep} at $(i,j)$ applying second-order central finite differences reads:
\begin{align}\label{eq:discr}
    (\partial_t & u)_{ij}
     = \frac{1}{h} \Big[ \left[ \nabla_N u \cdot g_{N_{i,j}} + \nabla_S u \cdot g_{S_{i,j}} + \nabla_E u \cdot g_{E_{i,j}} + \nabla_W u \cdot g_{W_{i,j}} \right] + \nonumber
    \\
    & -
    \left[ d_N^y \cdot g_{N_{i,j}} \cdot u_{N_{i,j}} + d^y_S \cdot g_{S_{i,j}} \cdot u_{S_{i,j}} + d_E^x \cdot g_{E_{i,j}} \cdot u_{E_{i,j}} + d_W^x \cdot g_{W_{i,j}} \cdot u_{E_{i,j}} \right]
    \Big]
\end{align}
where each term is defined as in Table \ref{tab1}.

\begin{table}[h]
\begin{center}
{\renewcommand{\arraystretch}{2}
\begin{tabular}{|cccc|}
    \hline
    $\nabla_N u = \frac{u_{i+1,j}-u_{i,j}}{h}$, &
    $\nabla_S u = \frac{u_{i-1,j}-u_{i,j}}{h}$, &
    $\nabla_E u = \frac{u_{i,j+1}-u_{i,j}}{h}$, &
    $\nabla_W u = \frac{u_{i,j-1}-u_{i,j}}{h}$,
    \\ 
    \hline
    $g_{N} = \frac{g_{i+1,j}+g_{i,j}}{2}$, &
    $g_{S} = \frac{g_{i-1,j}+g_{i,j}}{2}$, & 
    $g_{E} = \frac{g_{i,j+1}+g_{i,j}}{2}$, &
    $g_{W} = \frac{g_{i,j-1}+g_{i,j}}{2}$,
    \\
    \hline
    $u_{N} = \frac{u_{i+1,j}+u_{i,j}}{2}$, &
    $u_{S} = \frac{u_{i-1,j}+u_{i,j}}{2}$, & 
    $u_{E} = \frac{u_{i,j+1}+u_{i,j}}{2}$, &
    $u_{W} = \frac{u_{i,j-1}+u_{i,j}}{2}$,
    \\
    \hline
    $d_N^y = \frac{2(v_{i+1,j}-v_{i,j})}{h(v_{i+1,j}+v_{i,j})}$, &
    $d_S^y = \frac{2(v_{i-1,j}-v_{i,j})}{h(v_{i-1,j}+v_{i,j})}$, & 
    $d_E^x = \frac{2(v_{i,j+1}-v_{i,j})}{h(v_{i,j+1}+v_{i,j})}$, &
    $d_W^x = \frac{2(v_{i,j-1}-v_{i,j})}{h(v_{i,j-1}+v_{i,j})}$.
    \\\hline
\end{tabular}}
\caption{FD discretization schemes.}
\label{tab1}
\end{center}
\end{table}
Note that the discretisation \eqref{eq:discr} also holds on boundary pixels, having preliminarily set $\dd=0$ on the boundary edges and mirroring the image therein in order to satisfy the Neumann homogeneous boundary condition \eqref{eq:g-osmosis}.

We thus rewrite \eqref{eq:discr} by incorporating the discretisations reported in Table \ref{tab1} and collecting the coefficients of the pixel $(i,j)$ and its neighbours:
\begin{align}\label{eq:discr_complete}
    (\partial_t u)_{i,j} &= 
    \,   u_{i-1,j}\, \frac{g_{S}}{h} \left( \frac{1}{h} - \frac{d^y_S}{2} \right) 
    + u_{i+1,j}\, \frac{g_{N}}{h} \left(  \frac{1}{h} - \frac{d^y_N}{2} \right) + \nonumber \\
    +& u_{i,j-1}\, \frac{g_{W}}{h} \left(  \frac{1}{h} - \frac{d^x_W}{2} \right)
    + u_{i,j+1}\, \frac{g_{E}}{h} \left(  \frac{1}{h} - \frac{d^x_E}{2} \right) +  \\
    +& u_{i,j} \Bigg[ 
    \frac{g_{S}}{h} \left( - \frac{1}{h} - \frac{d^y_S}{2} \right) +
    \frac{g_{N}}{h} \left( - \frac{1}{h} - \frac{d^y_N}{2} \right) + 
    \frac{g_{W}}{h} \left( - \frac{1}{h} - \frac{d^x_W}{2} \right) +
    \frac{g_{E}}{h} \left( - \frac{1}{h} - \frac{d^x_E}{2} \right)
      \Bigg] . \nonumber
\end{align}

In order to represent \eqref{eq:discr_complete} in matrix-vector form, we first rearrange into a single vector 
$u(t)= (u_1(t),\ldots,u_N(t))^T\in\R^N$ the values of the evolving image $u(T)$ at each pixel $i=1,\ldots,N$.
Then, we write semi-discretised-in-space model which leads to the following Cauchy problem:
\begin{equation}\label{eq:M_form}
    \begin{cases}
    \frac{d u}{d t} = A(u)\,u(t), \\
    u(0) = f,
    \end{cases}
\end{equation}
where $A(u)\in\R^{N\times N}$ is a non-symmetric sparse matrix, with only five non-zero diagonals and with entries given by
\begin{align}\label{eq:A}
    a_{ij}:=
    \begin{cases}
    {\displaystyle \sum_{k \in \mathcal{N}(i)} \frac{g_k+g_i}{2h^2}\left( -1- \frac{v_k-v_i}{v_k+v_i}\right)} & \text{if } i=j,
    \\
    {\displaystyle \frac{g_j+g_i}{2h^2}\left( 1- \frac{v_j-v_i}{v_j+v_i}\right)} \quad &\text{if $i\neq j$, $j\in\mathcal{N}(i)$},
    \\
    0 & \text{if $i\neq j$, $j\not\in\mathcal{N}(i)$.}
    \end{cases}
\end{align}
where $\mathcal{N}(i)$ represents the set of the neighbours in directions $\{N,E,O,W\}$ of the pixel $i$ and $g_i$ is evaluated at pixel $i$ according to formula \eqref{eq:g-discr}.

\medskip

The next proposition shows some useful properties of the matrix $A(u)$.

\begin{prop}
For every vector $u\in\R^N$, let $A(u)\in\R^{N\times N}$ be the matrix with entries given in \eqref{eq:A}. Then, the following properties hold:
\begin{enumerate}
    \item $A(u)$ has non-negative off-diagonals,
    \item all column sums of $A(u)$ are 0,
    \item $A(u)$ is irreducible,
    \item $A(u)v=0$.
\end{enumerate}
\end{prop}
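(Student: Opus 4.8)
The plan is to read off the entries in \eqref{eq:A} after an elementary algebraic simplification and then reduce properties 2 and 4 to termwise cancellations. First I would record the two identities
$$
1 - \frac{v_j - v_i}{v_j + v_i} = \frac{2v_i}{v_i + v_j}, \qquad
-1 - \frac{v_k - v_i}{v_k + v_i} = \frac{-2v_k}{v_i + v_k},
$$
valid since the reference image $v$ is strictly positive, and introduce the shorthand $\gamma_{ij} := (g_i + g_j)/(2h^2)$ for $j\in\mathcal{N}(i)$. With this notation the relevant entries become
$$
a_{ij} = \gamma_{ij}\,\frac{2v_i}{v_i + v_j}\quad (j\in\mathcal{N}(i)),\qquad
a_{ii} = -\sum_{k\in\mathcal{N}(i)} \gamma_{ik}\,\frac{2v_k}{v_i + v_k}.
$$
Two structural facts will be used throughout: $\gamma_{ij}=\gamma_{ji}$, and the neighbour relation is symmetric, i.e. $j\in\mathcal{N}(i)\iff i\in\mathcal{N}(j)$.

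Property 1 is then immediate: for $j\in\mathcal{N}(i)$ the factors $\gamma_{ij}$, $2v_i$ and $(v_i+v_j)^{-1}$ are all strictly positive, because both $g$ (through its $\epsilon$-regularisation $g=1/|\sbold|_\epsilon>0$) and $v$ are positive, so in fact $a_{ij}>0$; for non-neighbours $a_{ij}=0$. This simultaneously settles property 3: the directed graph associated to $A(u)$ has an edge $i\to j$ exactly when $j\in\mathcal{N}(i)$, and since this is the standard $4$-connected pixel grid on $\Omega_h$, which is connected, while the neighbour relation is symmetric, the graph is strongly connected and $A(u)$ is irreducible.

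For property 2 I would compute the $j$-th column sum $\sum_i a_{ij} = a_{jj} + \sum_{i\in\mathcal{N}(j)} a_{ij}$; inserting the expressions above, relabelling $i\to k$, and using $\gamma_{ij}=\gamma_{ji}$ yields
$$
\sum_i a_{ij} = -\sum_{k\in\mathcal{N}(j)} \gamma_{jk}\,\frac{2v_k}{v_j+v_k} + \sum_{k\in\mathcal{N}(j)} \gamma_{jk}\,\frac{2v_k}{v_k+v_j} = 0 .
$$
Property 4 is the analogous \emph{row}-wise identity: for each $i$,
$$
(A(u)v)_i = a_{ii}v_i + \sum_{j\in\mathcal{N}(i)} a_{ij}v_j
= -\sum_{k\in\mathcal{N}(i)} \gamma_{ik}\,\frac{2v_iv_k}{v_i+v_k} + \sum_{j\in\mathcal{N}(i)} \gamma_{ij}\,\frac{2v_iv_j}{v_i+v_j} = 0 ,
$$
again by termwise cancellation. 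These are the discrete counterparts of the continuous facts that $v$ is a steady state and that the average grey value is conserved (Proposition \ref{prop:properties}).

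The individual computations are routine; the only point requiring genuine care is the bookkeeping in property 2, where the diagonal contribution of a pixel $j$ must be matched against the off-diagonal entry $a_{ij}$ sitting in the transposed position. This matching relies simultaneously on $\gamma_{ij}=\gamma_{ji}$ and on the symmetry of the neighbour relation, and I expect it to be the main (though minor) obstacle. It is worth stressing that $A(u)$ itself is \emph{not} symmetric, so the column-sum cancellation is a genuine consequence of the specific canonical-drift structure of the entries rather than of any symmetry of the matrix.
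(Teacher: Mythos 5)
Your proof is correct and follows essentially the same route as the paper's: positivity of the nonzero entries from $g,v>0$ for property 1, termwise cancellation of each column's off-diagonal entries against the matching diagonal contribution for property 2, strong connectivity of the $4$-connected pixel graph for property 3, and termwise cancellation after substituting $v$ for property 4. The only differences are cosmetic: you simplify the entries to the form $\gamma_{ij}\,2v_i/(v_i+v_j)$ before computing, and you write out the row-wise cancellation for $A(u)v=0$ explicitly, which the paper instead leaves as an appeal to substituting $u$ with $v$ in \eqref{eq:discr}.
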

\begin{proof}
1) According to \eqref{eq:A}, if pixel $i$ and pixel $j$ are not adjacent, then the entry $a_{ij}$ is zero. On the other hand, as $g_i,g_j,v_i,v_j> 0$, then $a_{ij}>0, \, \forall i,j$.

2) Considering an arbitrary column $l$ of $A$, we have:
\begin{align*}
\sum_{i=1}^{N} a_{i,l} &= a_{l,l} + \sum_{j\in\mathcal{N}(l)} a_{j,l} = \\
&= \sum_{j\in\mathcal{N}(l)} \frac{g_j+g_l}{2h^2}\left( -1- \frac{v_j-v_l}{v_j+v_l}\right) + \sum_{j\in\mathcal{N}(l)} \frac{g_l+g_j}{2h^2}\left( 1- \frac{v_l-v_j}{v_l+v_j}\right) = 0.
\end{align*}

3) Since $a_{i,j}>0$ for all adjacent $i,j$, so the directed graph $G_A$ associated to $A$ is strongly connected and $A$ is irreducible.

4) The value of the $ij$ coordinate of the vector $A(u)v$ is obtained from the right hand side of the expression \eqref{eq:discr}, substituting $u$ with $v$. Writing explicitly every term as described in Table \eqref{tab1}, it is easy to see that they cancels each other. Therefore $A(u)v=0$.
\end{proof}

The properties showed in the lemma above are crucial for the following. Typically, they are proved in the context of symmetric space-discretisation matrices (see, e.g., \cite{weickert98}), but have been shown to hold also in the case of non-symmetric diffusion-transport operators such as the osmosis one, see, e.g., \cite{vogel-ssvm,Parisotto2019AnisotropicOF}.

\subsection{Fully-discretisation in time}

We present two possible fully discretised approximations of \eqref{eq:M_form}, obtained by applying Euler integration methods in time. 
Let $\tau>0$ denote a uniform time-step and for $k=0,1,\ldots$ let  $u^k$ be the approximation of $u$ at discrete time $k\tau \in (0,T]$.

\begin{itemize}
\item \textbf{Explicit discretisation}: We first consider the simplest space discretisation  
at the $k$th discrete time step
that leads to the following full explicit scheme 
\begin{equation}\label{eq:explicit}
    \frac{u^{k+1}-u^k}{\tau} = A(u^k) u^k \qquad \Longrightarrow \qquad  u^{k+1} = (I+\tau A(u^k)) u^k. \tag{E}
\end{equation}
\item \textbf{Semi-implicit discretisation}: Alternatively, we can consider a semi-implicit scheme where the nonlinear terms of the equation
are treated from the previous time step, while the linear ones are considered at the
current time step; this leads to the semi-implicit scheme
\begin{equation}\label{eq:semi-implicit}
    \frac{u^{k+1}-u^k}{\tau} = A(u^k) u^{k+1} \qquad \Longrightarrow \qquad  u^{k+1} = (I-\tau A(u^k))^{-1} u^k.  \tag{S.I.}
\end{equation}
\end{itemize}
In both cases, the time-stepping process approximating \eqref{eq:M_form} can be represented in terms of a matrix $P\in\R^{N\times N}$ as
\begin{equation}\label{eq:P_process}
    \begin{cases}
    u^{k+1} = P(u^k)\, u^k,\\
    u^0 = f.
    \end{cases}
\end{equation}
where 
\begin{equation}
    P=I+\tau A, \quad  \text{in} \quad \text{\eqref{eq:explicit}}, \qquad
    P=(I-\tau A)^{-1} \quad \text{in} \quad \text{\eqref{eq:semi-implicit}}.
    \label{eq:Pmat}
\end{equation} 

The following proposition provides sufficient conditions on matrix $P$ such that the evolution process
\eqref{eq:P_process} is stable.

\begin{lemma}\label{lemma:P_stab}
Let $f\in(\R_+^*)^N$ and let assume that $P(u^k)\in\R^{N\times N}$, defined in \eqref{eq:Pmat}, satisfies the following assumptions for all $k\in\mathbb{N}$:
\begin{enumerate}
    \item[(A1)] all column sums of $P$ are 1,
    \item[(A2)] $P$ is non-negative.
\end{enumerate}
Then the iterative process \eqref{eq:P_process} is stable with respect to the $1$-norm, i.e. $\|u^k\|_1\leq\|f\|_1$ $\forall k\in\mathbb{N}$.
\end{lemma}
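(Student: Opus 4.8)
The plan is to prove the estimate by establishing a non-expansion property for a single iteration and then iterating it. The essential point is the characterisation of the matrix norm induced by the vector $1$-norm, namely that $\|P\|_1$ equals the maximum absolute column sum $\max_j \sum_i |p_{ij}|$. Assumption (A2) guarantees $|p_{ij}| = p_{ij}$, so that every column sum of absolute values coincides with the ordinary column sum; assumption (A1) then forces each of these to equal $1$. The two hypotheses together thus yield $\|P(u^k)\|_1 = 1$ for all $k$, and it is precisely the combination of non-negativity \emph{and} unit column sums that is needed here—either one alone would not suffice.

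With this fact established, I would carry out the one-step bound explicitly:
$$\|u^{k+1}\|_1 = \sum_i \Big|\sum_j p_{ij}\, u^k_j\Big| \le \sum_i \sum_j p_{ij}\, |u^k_j| = \sum_j |u^k_j| \sum_i p_{ij} = \sum_j |u^k_j| = \|u^k\|_1,$$
where the inequality is the triangle inequality combined with (A2), and the exchange of the order of summation followed by (A1) collapses the inner column sum to $1$. Equivalently, this is just submultiplicativity $\|u^{k+1}\|_1 \le \|P(u^k)\|_1\,\|u^k\|_1$ together with $\|P(u^k)\|_1 = 1$.

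A straightforward induction on $k$ then chains these inequalities, giving $\|u^k\|_1 \le \|u^{k-1}\|_1 \le \cdots \le \|u^0\|_1 = \|f\|_1$, which is the claimed stability with respect to the $1$-norm. I do not expect any genuine analytic obstacle: the argument is a clean consequence of the column-stochastic structure encoded in (A1)--(A2). The only delicate point is conceptual rather than technical, namely recognising that non-negativity is exactly what allows the absolute values on the matrix entries to be dropped so that (A1) can be applied; verifying that the concrete matrices $P = I + \tau A$ and $P = (I-\tau A)^{-1}$ actually satisfy (A1)--(A2) is a separate matter handled elsewhere, since the present lemma takes these as hypotheses.
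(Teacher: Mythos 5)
Your proof is correct and follows essentially the same route as the paper's: both arguments reduce to submultiplicativity of the induced $1$-norm together with the observation that (A1)--(A2) force $\|P(u^k)\|_1 = 1$, the paper unrolling the full product $P(u^{k-1})\cdots P(f)$ where you use a one-step bound plus induction. If anything, your version is slightly more careful, since the paper justifies $\|P(u^j)\|_1=1$ by citing only the unit column sums, whereas you correctly point out that non-negativity (A2) is what lets the absolute column sums coincide with the ordinary ones.
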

\begin{proof}
For all $k$, we recursively have
\begin{align*}
u^{k} &= P(u^{k-1})u^{k-1} = P(u^{k-1})\cdot P(u^{k-2})\cdot \ldots \cdot P(u^1)\cdot P(f)\cdot f \\
\|u^k\|_1 &\leq \|P(u^{k-1})\|_1\cdot\|P(u^{k-2})\|_1 \cdot \ldots \cdot \|P(u^1)\|_1 \cdot \|P(f)\|_1 \cdot \|f\|_1.
\end{align*}
Since $P(u^k)$ has column sums equal to one, then $\|P(u^j)\|_1=1$ for all $j=1,\ldots,k-1$ and $ \|u^k\|_1 \leq 1 \cdot \|f\|_1 $ for all $k\in\mathbb{N}$.
\end{proof}

We are interested in verifying if the conservation properties of the non-linear osmosis model proved in Proposition \ref{prop:properties} for the continuum setting hold in the discrete case.
The following lemma adds two other conditions on the matrix $P$ needed for the process to preserve the average mass and the non-negativity. Furthermore, it provides a useful information on the localisation of the eigenvalues of $P$.

\begin{lemma}\label{lemma:P_prop_discr}
Let $f\in(\R_+^*)^N$ and consider the iterative process \eqref{eq:P_process}, where, for all $k$, the matrix $P(u^k)\in\R^{N\times N}$ satisfies both the assumptions (A1)-(A2) in Lemma \ref{lemma:P_stab} and the following 
\begin{itemize}
    \item[(A3)]$P$ is irreducible,
    \item[(A4)]$P$ has only positive diagonal entries.
\end{itemize}
Then we have
\begin{enumerate}
    \item[(1)] the average grey value is preserved:
    $$ \sum_{i=1}^N u_i^k = \sum_{i=1}^N f_i,$$
    \item[(2)] the evolution preserves non-negativity: $u_i^k\geq 0$ for all $i=1,...,N$, $k>0$,
    \item[(3)] $P$ has a simple eigenvalue $\lambda=1$, with a corresponding positive eigenvector, and the other eigenvalues have absolute value strictly less than one.
\end{enumerate}
\end{lemma}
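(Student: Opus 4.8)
The plan is to dispatch claims (1) and (2) by elementary induction and to reserve the spectral statement (3) for a Perron--Frobenius argument, which I expect to be the crux. For the mass conservation (1), I would write the total mass as $\mathbf{1}^T u$, with $\mathbf{1}=(1,\dots,1)^T\in\R^N$. Assumption (A1) is exactly the statement $\mathbf{1}^T P(u^k)=\mathbf{1}^T$, so applying $u^{k+1}=P(u^k)u^k$ yields $\mathbf{1}^T u^{k+1}=\mathbf{1}^T u^k$; inducting from $u^0=f$ gives $\sum_i u_i^k=\sum_i f_i$ for every $k$. For non-negativity (2), the base case $u^0=f\in(\R_+^*)^N$ is (strictly) positive, and if $u^k\geq 0$ componentwise then $u^{k+1}=P(u^k)u^k\geq 0$ because $P(u^k)$ is entrywise non-negative by (A2); this closes the induction.

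The heart of the lemma is claim (3). I would first note that (A1), read as $P^T\mathbf{1}=\mathbf{1}$, shows $\lambda=1$ is an eigenvalue of $P^T$, hence of $P$. Since $P\geq 0$ has unit column sums, $\|P\|_1=\max_j\sum_i P_{ij}=1$, so every eigenvalue satisfies $|\lambda|\leq\rho(P)\leq\|P\|_1=1$; together with the previous observation this pins the spectral radius to $\rho(P)=1$. I would then invoke the Perron--Frobenius theorem for the non-negative irreducible matrix $P$ (assumptions (A2)--(A3)): its spectral radius $\rho(P)=1$ is a simple eigenvalue with a strictly positive eigenvector, which settles both simplicity and positivity.

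The remaining and most delicate point is the \emph{strict} inequality $|\lambda|<1$ for all eigenvalues $\lambda\neq 1$. Irreducibility by itself does not forbid further eigenvalues on the unit circle, since an imprimitive (periodic) matrix of period $h>1$ would carry $h$ peripheral eigenvalues $e^{2\pi i k/h}$. This is precisely where assumption (A4) is needed: an irreducible non-negative matrix with a positive diagonal entry is \emph{primitive} (aperiodic), and for a primitive matrix the Perron eigenvalue is the unique eigenvalue of maximal modulus. Since (A4) ensures every diagonal entry of $P$ is positive, $P$ is primitive, and hence $\lambda=1$ is the only eigenvalue with $|\lambda|=1$, all others lying strictly inside the unit disc. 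I expect the main obstacle to be this last step---correctly deducing primitivity from (A3)--(A4) and appealing to the strong form of Perron--Frobenius---whereas the mass and positivity claims are routine.
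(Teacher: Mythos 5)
Your proofs of (1) and (2) coincide with the paper's: mass conservation follows from the column-sum identity (equivalently $\mathbf{1}^T P(u^k) = \mathbf{1}^T$) and non-negativity from the entrywise non-negativity of $P(u^k)$, both closed by induction from $u^0=f$. For (3), you share the paper's skeleton ($\|P\|_1=1$ forces $|\lambda|\le 1$, the unit column sums give the eigenvalue $\lambda=1$, and Perron--Frobenius for irreducible non-negative matrices yields simplicity and the positive eigenvector), but you diverge on the key step of excluding all other eigenvalues from the unit circle. The paper argues via Gershgorin's theorem applied to the columns of $P$: by (A1), (A2) and (A4) each disk is centered at $p_{jj}>0$ with radius $\sum_{i\neq j}p_{ij}=1-p_{jj}$, hence lies in the closed unit disk and touches the unit circle only at $z=1$, so $1$ is the only admissible peripheral eigenvalue. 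You instead observe that (A3) together with (A4) makes $P$ primitive (a positive diagonal entry is a cycle of length one in the digraph, so the index of imprimitivity is one) and then invoke the strong form of Perron--Frobenius for primitive matrices. Both arguments are correct, and both consume (A4) at exactly this point. The Gershgorin route is more elementary and self-contained, requiring no notion of periodicity and giving an explicit geometric localisation of the spectrum; your primitivity route is the standard aperiodicity argument from Markov-chain theory and is slightly more economical in its hypotheses, since it needs only \emph{one} positive diagonal entry rather than all of them, so it in fact establishes a marginally stronger statement than the lemma as posed.
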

\begin{proof}
(1) Thanks to the hypotheses (A1) of Lemma \ref{lemma:P_stab}, we can write
$$ \sum_{i=1}^N u_i^k = \sum_{i=1}^N \sum_{j=1}^N P_{ij}u_j^{k-1} = \sum_{j=1}^N \left(\sum_{i=1}^N P_{ij}\right) u_j^{k-1} = \sum_{j=1}^N u_j^{k-1} = \ldots = \sum_{j=1}^N f_j. $$

(2) Applying the first step to the non-negative initial image $f$, we have
$$ u_i^1 = p_{ii} f_i + \sum_{i\neq j} p_{ij} f_j \geq 0 \qquad i=1,\ldots,N $$
since $f_i, f_j, p_{ii},p_{ij}\geq 0$. This observation can be applied recursively for all $k$.

(3) In Lemma \ref{lemma:P_stab} we proved that $\|P\|_1=1 $ and, therefore, the eigenvalues $\lambda$ of $P$ satisfies $|\lambda|\leq 1$.
Moreover, $\lambda=1$ is an eigenvalue of $P$. In fact, if we define the vector $e=(1,\ldots,1)$, then $ P^T e^T = e^T$, because column-wise $P$ sums up to one.
By Gerschgorin's theorem, we know that the eigenvalues lie within the set $\Lambda$ of all Gerschgorin's disk:
$$ \Lambda := \bigcup_{j=1}^N B_j \qquad \text{with} \qquad B_j:=\left\{ z\in\mathbb{C}\, \Big| \, |z-p_{jj}|\leq \sum_{i\neq j} |p_{ij}| \right\}. $$
Now, since $P$ is non-negative with unit column sums and positive diagonal entries, the right hand side of the last inequality becomes
$$\sum_{i\neq j} |p_{ij}| = \sum_{i\neq j} p_{ij} = 1-p_{jj} \in [0,1).$$
Therefore $\lambda=1$ is the only eigenvalue of $P$ s.t. $|\lambda|=1$, while for the others $|\lambda|<1$. Moreover, $P$ has spectral radius $\rho(P)=1$. By Perron-Frobenius theorem (see, e.g., Th.8.4.4 in \cite{horn_johnson_1985}) we can thus conclude as the spectral radius of a $P$ is a simple eigenvalue associated to  a positive eigenvector.
\end{proof}

Up to this point, we have defined sufficient conditions on the matrix $P$ to make  the evolution process \eqref{eq:P_process}  stable for any time step $k$, and guarantee the desired conservation properties.
Specifically, we stated that $P$
must satisfy the assumptions \textit{(A1)-(A4)} for all $k$.

With the following theorem, we show that both the explicit \eqref{eq:explicit} and the semi-implicit \eqref{eq:semi-implicit} time-discretisation schemes verify such conditions.

\begin{teo}[Numerical Stability]
Let $f\in(\R_+^*)^N$ and for $u\in\R^N$ let $A$ be the matrix with entries $a_{ij}$, with $i,j=1,\ldots,N$, defined in \eqref{eq:A}. Then
\begin{enumerate}
	\item the explicit scheme \eqref{eq:explicit} is conditionally stable for
	\begin{equation}
	    \tau\leq 1/\max\{|a_{11}|,...,|a_{NN}|\}
	\label{eq:stabcond}
	\end{equation}	
	\item the semi-implicit scheme \eqref{eq:semi-implicit} is unconditionally stable for any $\tau>0$.
\end{enumerate}
\end{teo}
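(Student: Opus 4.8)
The plan is to verify, for each scheme, that the iteration matrix $P$ fulfils the hypotheses (A1)--(A2) of Lemma \ref{lemma:P_stab}, from which $1$-norm stability follows at once. Throughout I would lean on the four structural properties of $A(u)$ established in the preceding proposition: non-negative off-diagonals, zero column sums, irreducibility, and $A(u)v=0$. Only the first two are actually needed for stability; the latter two would be used if one additionally wanted the conservation and spectral conclusions of Lemma \ref{lemma:P_prop_discr}.

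For the explicit scheme, where $P=I+\tau A(u^k)$, the check is direct. Since the column sums of $A$ vanish, so do those of $\tau A$, and adding $I$ makes every column sum of $P$ equal to $1$, giving (A1) for every $\tau>0$. For (A2), the off-diagonal entries of $P$ are $\tau a_{ij}\geq 0$ by the non-negativity of the off-diagonals of $A$, while the diagonal entries are $1+\tau a_{ii}$. Because the column sums of $A$ are zero and its off-diagonals are non-negative, each diagonal entry is strictly negative with $|a_{ii}|=\sum_{j\in\mathcal{N}(i)}a_{ji}>0$; hence $1+\tau a_{ii}\geq 0$ exactly when $\tau\leq 1/|a_{ii}|$. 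Requiring this for every $i$ produces precisely the bound \eqref{eq:stabcond}, under which (A2) holds and Lemma \ref{lemma:P_stab} yields conditional stability.

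For the semi-implicit scheme, where $P=(I-\tau A(u^k))^{-1}$, I would first analyse $B:=I-\tau A$ and then pass to its inverse. The matrix $B$ has positive diagonal entries $1-\tau a_{ii}=1+\tau|a_{ii}|$ and non-positive off-diagonal entries $-\tau a_{ij}\leq 0$. Using the zero-column-sum property of $A$, for each column $j$ one finds $\sum_{i\neq j}|b_{ij}|=\tau\sum_{i\neq j}a_{ij}=\tau|a_{jj}|<1+\tau|a_{jj}|=b_{jj}$, so $B$ is strictly diagonally dominant by columns and therefore a nonsingular $M$-matrix for every $\tau>0$. The key consequence I would invoke is that the inverse of a nonsingular $M$-matrix is entrywise non-negative, which gives (A2) for $P=B^{-1}$ with no restriction on $\tau$. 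Finally, the column-sum identity $e^T B=e^T$ (the unit column sums of $B$) yields $e^T=e^T B^{-1}$ after right-multiplication by $B^{-1}$, so $P$ has unit column sums and (A1) holds; Lemma \ref{lemma:P_stab} then gives unconditional stability.

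I expect the only genuinely delicate step to be the non-negativity of $(I-\tau A)^{-1}$ in the semi-implicit case, since here no explicit formula for the inverse is available and the sign of the entries must be deduced structurally. The cleanest route is the one above: establish strict column diagonal dominance (immediate from the zero column sums of $A$), conclude that $B$ is a nonsingular $M$-matrix, and cite the standard result that such matrices have non-negative inverses. Irreducibility of $A$ would further upgrade this to a strictly positive inverse, thereby also supplying (A3)--(A4) and the spectral statement of Lemma \ref{lemma:P_prop_discr}, although these are not required for the stability claim alone.
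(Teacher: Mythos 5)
Your proof is correct and follows essentially the same route as the paper's: both reduce the claim to conditions (A1)--(A2) of Lemma \ref{lemma:P_stab}, verify them directly for $P=I+\tau A$ under the time-step bound \eqref{eq:stabcond}, and for the semi-implicit case use strict column diagonal dominance of $I-\tau A$ together with its non-positive off-diagonals to conclude it is a non-singular M-matrix with non-negative inverse, obtaining unit column sums from $e^T(I-\tau A)=e^T$. Your observation that plain non-negativity of the inverse suffices for stability (with irreducibility only needed to upgrade to strict positivity and to (A3)--(A4)) is a slightly cleaner bookkeeping than the paper's, which invokes strict positivity outright and mislabels it as verifying (A1).
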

\begin{proof}
Thanks to Lemma \ref{lemma:P_stab}, we simply need to show that, in both cases, $P$ is non-negative and has column sums equal to 1.

1) In the explicit scheme, $P=I+\tau A$, where $A$ has non-negative off-diagonals and column sums zero. Thus $P$ has column sums 1 for any $\tau$ and  is non-negative only if condition \eqref{eq:stabcond} is satisfied.

2) In the semi-implicit scheme, $P=(I-\tau A)^{-1}$. 
$A$ has column sums zero and for all $j=1,...,N$ we have
\begin{align*}
    -a_{jj} = \sum_{i\neq j} a_{ij} \quad \Longrightarrow \quad  
    1-\tau a_{jj} > \tau \sum_{i\neq j} a_{ij}.
\end{align*}
Hence, the matrix $I-\tau A$ is strictly column diagonally dominant and non-singular. Moreover, it has non-positive off-diagonal entries. Therefore, $I-\tau A$ is a non-singular M-matrix (see \cite{BERMAN19791}, Th. 6.2.3 ($C_{10}$)) and its inverse has only strictly positive entries (see \cite{BERMAN19791}, Th. 6.2.7). This satisfies (A1) of Lemma \ref{lemma:P_stab}.
Lastly, all column sums of $P=(I-\tau A)^{-1}$ are 1. In fact, if we define $e=(1,\ldots,1)$, we have $e=e(I-\tau A)$ and, equivalently, $e=e(I-\tau A)^{-1}$.
\end{proof}

The following result investigates under which sufficient conditions the discretised solution obtained by solving \eqref{eq:P_process} satisfies the conservation properties introduced in Proposition \ref{prop:properties} along the time iterations $k$ and how a steady-state can be computed. 

\begin{teo}
\label{teo:stab}
Let $f\in(\R_+^*)^N$. Then, the solution $u^k$ computed at discrete time $k\tau$  
with $g$ defined as in \eqref{eq:g-discr} preserves its mass and non-negativity for any $k\geq 1$. Moreover, it converges to the steady-state solution $u^*$ defined by
\begin{equation}
    u^* \, \, = \, \, \frac{\mu_f}{\mu_v} v,
\end{equation}
if
\begin{enumerate}
	\item $\tau\leq 1/\max\{|a_{1,1}|,...,|a_{N,N}|\}$, when the explicit scheme \eqref{eq:explicit} is applied
	\item $\tau>0$, when the semi-implicit scheme \eqref{eq:semi-implicit} is applied.
\end{enumerate}
\end{teo}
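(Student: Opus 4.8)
The plan is to reduce the statement to the structural facts already proved for the iteration matrix $P=P(u^k)$ and then to analyse the resulting non-autonomous linear recursion for the error. First I would recall that the preceding numerical stability theorem already establishes, for both schemes and under the indicated restriction on $\tau$, that $P$ is non-negative with unit column sums, i.e. assumptions (A1)--(A2) of Lemma~\ref{lemma:P_stab} hold; consequently parts (1) and (2) of Lemma~\ref{lemma:P_prop_discr} immediately yield mass preservation and non-negativity of $u^k$ for every $k\geq 1$. To obtain the sharper spectral information (part (3) of Lemma~\ref{lemma:P_prop_discr}) I still need (A3)--(A4): irreducibility is clear since $P=I+\tau A(u^k)$ inherits the fixed, strongly connected grid-adjacency pattern of $A(u^k)$ in the explicit case, while $P=(I-\tau A(u^k))^{-1}$ has strictly positive entries in the semi-implicit case; positivity of the diagonal is automatic in the semi-implicit case, and in the explicit case follows from $p_{ii}=1+\tau a_{ii}\geq 1-\tau|a_{ii}|>0$ under \eqref{eq:stabcond} (taken with strict inequality, or noting that a single possibly vanishing diagonal entry does not destroy primitivity of an irreducible non-negative matrix).

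Next I would identify the steady state using property (4) of the proposition on $A(u)$, namely the identity $A(u)v=0$ valid for \emph{every} $u$. This shows that $v$ is an eigenvector of $P(u)$ with eigenvalue $1$ for every iterate, since $P(u)v=(I+\tau A(u))v=v$ in the explicit case and $(I-\tau A(u))v=v$, hence $P(u)v=v$, in the semi-implicit case. Therefore every multiple $cv$ is a fixed point of the update $u\mapsto P(u)u$. By part (3) of Lemma~\ref{lemma:P_prop_discr} the eigenvalue $1$ of $P(u^k)$ is simple with a one-dimensional positive eigenspace, which—because $v>0$—must be exactly $\mathrm{span}(v)$. Hence any fixed point has the form $cv$, and mass conservation forces $c\sum_i v_i=\sum_i f_i$, i.e. $c=\mu_f/\mu_v$, giving the claimed $u^*=(\mu_f/\mu_v)v$.

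For convergence I would set $e^k:=u^k-u^*$. Since $u^*\in\mathrm{span}(v)$ we have $P(u^k)u^*=u^*$, so the error obeys the non-autonomous recursion $e^{k+1}=P(u^k)e^k$; moreover mass conservation gives $\sum_i e^k_i=0$, so each $e^k$ lies in the hyperplane $H:=\{x:\langle e,x\rangle=0\}$ with $e=(1,\dots,1)$, which is invariant under every $P(u^k)$ because $e^TP=e^T$. On $H$ the spectral radius of each individual $P(u^k)$ is strictly below $1$ by part (3), and the goal is to upgrade this to $\|e^k\|_1\to 0$, equivalently $u^k\to u^*$.

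The hard part is precisely this last step: because $P=P(u^k)$ depends on the iterate, the recursion is a backward product of \emph{different} matrices, and a per-step spectral radius $<1$ does not by itself make the inhomogeneous product $P(u^{k-1})\cdots P(u^0)$ contract $H$. What is needed is a \emph{uniform} contraction (weak ergodicity of the product). I would obtain it from two observations: (i) all matrices $P(u^k)$ share the same fixed, strongly connected grid-adjacency sparsity pattern, so a fixed power $P^m$ (with $m$ bounded by the grid connectivity) is strictly positive; and (ii) by parts (1)--(2) the iterates remain in the compact scaled simplex $\{u\geq 0:\ \sum_i u_i=\sum_i f_i\}$, on which the $\epsilon$-regularised diffusivity $g=1/|\sbold|_{\epsilon}$ is bounded above and below, yielding uniform positive lower bounds on the nonzero entries of $P$ and hence a Dobrushin-type ergodicity coefficient bounded away from $1$ uniformly in $k$. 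A uniform contraction factor then forces $\|e^k\|_1\to 0$ geometrically. An alternative, softer route uses compactness directly: the bounded sequence $u^k$ has subsequential limits, each of which (by continuity of $u\mapsto P(u)$, guaranteed by the $\epsilon$-regularisation) must be a fixed point $cv$ carrying the conserved mass, hence equals $u^*$; since the only accumulation point is $u^*$, the whole sequence converges to it.
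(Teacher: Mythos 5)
Your proposal follows the same skeleton as the paper's proof: on top of (A1)--(A2) (inherited from the stability theorem) you verify (A3)--(A4) for both schemes --- strict positivity of the diagonal of $I+\tau A$ under \eqref{eq:stabcond}, irreducibility from the fixed grid-adjacency graph, and strict positivity of $(I-\tau A)^{-1}$ in the semi-implicit case --- then invoke Lemma \ref{lemma:P_prop_discr} for conservation and the spectral structure, use $A(u^k)v=0$ to exhibit $v$ as a common positive eigenvector of all $P(u^k)$ with eigenvalue $1$, and pin down the constant $c=\mu_f/\mu_v$ by mass conservation. The genuine difference lies in the final convergence step. The paper disposes of it in one sentence, asserting that the evolution ``attenuates all components outside the eigenspace of $\lambda=1$'' and hence converges; this is exactly the point you flag as non-trivial, and you are right to do so: for the inhomogeneous product $P(u^{k-1})\cdots P(u^0)$, a subdominant spectral radius strictly below $1$ for each individual factor does not by itself force the product to contract the zero-mass hyperplane, since the factors change with $k$. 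Your Dobrushin-type argument --- a fixed strongly connected sparsity pattern, together with uniform upper and lower bounds on $g=1/|\mathbf{s}|_\epsilon$ over the compact invariant simplex (available precisely because of the $\epsilon$-regularisation), yielding entrywise lower bounds on the $P(u^k)$ and an ergodicity coefficient bounded away from $1$ uniformly in $k$ --- supplies exactly the missing uniform contraction, so on this step your argument is more complete than the paper's. You are also more careful than the paper about the borderline case $\tau = 1/\max_i|a_{ii}|$, where a diagonal entry of $I+\tau A$ may vanish.

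One caveat: your ``softer'' alternative via compactness does not work as stated. If $u^{k_j}\to\bar u$, continuity of $u\mapsto P(u)u$ only gives $u^{k_j+1}\to P(\bar u)\bar u$, i.e.\ the set of accumulation points is invariant under the update map; it does not follow that every accumulation point is a fixed point. Making that route rigorous would require asymptotic regularity ($\|u^{k+1}-u^k\|\to 0$) or a strict Lyapunov function, neither of which you establish. Since you present the ergodicity-coefficient argument as the primary route, this does not invalidate the proposal, but the alternative should be dropped or repaired.
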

\begin{proof}
We start by proving that, in both cases, the assumptions of Lemma \ref{lemma:P_prop_discr} are verified. We have already verified in the proof of Theorem \ref{teo:stab} when $P$ satisfies (A1) and (A2). Two other conditions, namely  (A3) and (A4), still need to be checked.

1) We know that $a_{ii}<0$ for all $i$ because $A$ has column sums zero and non-negative off-diagonal entries. Therefore, the condition on $\tau$ ensures that the diagonal entries of $P$ are strictly positive: $1+\tau a_{ii}>0 \; \forall i=1,...,N$.
$P=I+\tau A$ is irreducible because it has the same directed graph of $A$, which is irreducible.

2) In  Theorem \ref{teo:stab} we showed that $P=(I-\tau A)^{-1}$ has only strictly positive entries. Therefore, its diagonal is positive and it is irreducible because its directed graph is strongly connected.
Hence, independently on which time discretization we chose, explicit or semi-implicit, we can apply Lemma \ref{lemma:P_prop_discr}, eventually with restrictions on $\tau$, which guarantees that the evolution verifies the conservation properties. 
Moreover, the matrices $P(u^k)$ have eigenvalues with magnitude strictly less than one, with the exception of the simple eigenvalue $\lambda = 1$, that has a corresponding positive eigenvector.
Now, we observe that this eigenvector is the same for all matrices $P(u^k)$. In fact, since $A(u^k)v=0$, we have:
\begin{enumerate}
    \item in the explicit case $P(u^k)v=(I+\tau A(u^k))v = v$;
    \item in the semi-implicit case
$ P(u^k)^{-1}v = (I-\tau A(u^k))v=v$ and, hence, $P(u^k)v=v.$ 
\end{enumerate}
Therefore, the evolution $u^{k+1}=P(u^k)u^k$ attenuates all components outside the eigenspace of $\lambda=1$ to zero and the process converges to a positive eigenvector $w$ with the same average grey value as the initial vector $f$. In other terms, as $k\rightarrow\infty$, we have
$$ u^k \longrightarrow w = \frac{\mu_f}{\mu_v} v $$
\end{proof}

Numerically, the proposed semi-implicit scheme \eqref{eq:semi-implicit} 
can be solved at each iteration by means of any efficient preconditioned linear iterative solver
suitable for sparse, diagonally dominant M-matrices, such as e.g., SOR (successive over-relaxation) method. 

\section{Numerical results}\label{sec:results}
As described in Section \ref{sec:appl}, upon specific choices of the drift term $\dd$ in \eqref{eq:g-osmosis}, the nonlinear osmosis model can be applied to the shadow removal (see Section \ref{sec:shrem}), light-spot removal (see Section \ref{sec:lirem}) and the compact image representation (see Section \ref{sec:cdr}) task. We will compare qualitatively and quantitatively the results by our approach with the ones obtained by other osmosis-based methods as well as with  state-of-the-art approaches. Furthermore, some considerations on the computational efficiency of the proposed models will be made. Reconstruction quality will be measured in terms of the well-known Structural Similarity Index (SSIM $\in  [0,1]$) error between the original non-shadowed image and the reconstructed image, due to its intrinsic dependence of luminance, contrast and structure features. Note that the explicit iterative scheme \eqref{eq:explicit} is practically unusable for most of the examples considered, due to the constraint \eqref{eq:stabcond} on the time-step $\tau$ which for the following examples forced constrains of the type $\tau<10^{-8}$ which make the use of such scheme very limited in practice. On the contrary, the unconditional stability of the semi-implicit scheme \eqref{eq:semi-implicit} allows for the use possibly large values of the time-step. In the examples, we used $\tau=10^{3}$, which provides convergence and good level of accuracy in few iterations. For the processing of colour images, the proposed model is run on each R, G, B channel independently.
All algorithms are tested on a AMD Ryzen 5 3450U processor (2.10GHz) with 16GB RAM using MATLAB R2021a. 

\subsection{Shadow removal}\label{sec:shrem}

Recalling the shadow removal problem described in Section \ref{sec:shrem}, we test the non-linear model by setting the starting point of the osmosis evolution and the reference image as $u^0=v=f$, the given image. We remind that for such problem, the drift term $\dd$ is defined as in \eqref{eq:d_shrem}.
This choice corresponds to consider two different evolutions in the regions composing the image domain which explicitly read
\begin{align}
\label{eq:sr_model}
    \begin{cases}
    \partial_t u = \displaystyle{\dv \left( \frac{\nabla u}{|\nabla u|} \right)} \quad &\text{on } \Omega_b \times (0,T]\\
    \\
    \partial_t u = \displaystyle{\dv \left( \frac{\nabla u - \frac{\nabla f}{f}u}{\left|\nabla u - \frac{\nabla f}{f}u\right|} \right)} & \text{on } (\Omega_{in} \cup \Omega_{out}) \times (0,T] \\
    \end{cases}
\end{align}
with boundary conditions defined as in \eqref{eq:g-osmosis1}.
Note that choosing $\dd$ in a canonical form according \eqref{eq:d_shrem} allows to preserve the image content outside $\Omega_b$, while diffusing non-linearly, in the form of a Total Variation flow, on $\Omega_b$ itself.
The intensity balancing effect in the regions $\Omega_{in}$ and $\Omega_{out}$ depends solely by the evolution equation on $\Omega_b$, the region where the drift term is set to zero. Moreover, the diffusivity function $g$ allows to tune the diffusion intensity on the pixels in $\Omega_b$ depending on the value of $\nabla u$. In particular, $g$ slows down the diffusion on the pixels with high gradient values, which correspond to edges that need to be preserved.

We test the performance of the proposed non-linear shadow removal model both in case of hard shadows and for more realistic soft-edged shadows. 

In the case of hard shadows, synthetic examples $f$  can be easily generated by a constant multiplicative rescaling of a ground-truth image $f^*$ only in correspondence of a bounded region $S\subset\Omega$.
Denoting by $c\in(0,1)$ the (unknown) constant loss of luminosity on $S$, we can construct first the shadow image by setting it as:
\begin{equation}
    s =
    \begin{cases}
    c \quad &\text{on} \; S \\
    1 & \text{on} \; \Omega\setminus S,
    \end{cases}
\end{equation}
and obtain a shadowed version $f$ of $f^*$ by Hadamard (point-wise) multiplication
\begin{equation}
\label{eq:s_shadow}
    f =
    \begin{cases}
    f^* \odot s &\text{(hard shadow)} \\
    f^*\odot \left( G_{\sigma} * s \right) & \text{(soft shadow)} 
    \end{cases}
\end{equation}
where `$*$' denotes the standard convolution product and $G_\sigma$ is a Gaussian convolution kernel with standard deviation $\sigma>0$. Such convolution allows to generate an area of penumbra around shadow boundaries, which is often the case in real-world images where the shadow edges can not be precisely distinguished.

Fig. \ref{fig:hard_shadow} shows the results of the proposed nonlinear osmosis model solved by means of the semi-implicit scheme on two hard-shadowed images. For such images, it is easy to distinguish whether a pixel belongs to the shadowed or to the un-shadowed region. Hence, we can use a thin (2 pixel wide) mask covering one pixel from each side of the boundary between the two regions. This choice is sufficient to accurately remove the shadow while preserving all features on the shadow boundary.

\begin{figure}[h]
    \centering
    \begin{tabular}{cc|c}
        \includegraphics[width=0.16\textwidth]{figs/17.png} & 
        \fbox{\includegraphics[width=0.15\textwidth]{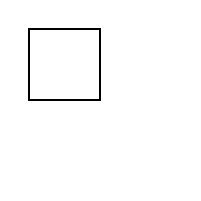}} &
        \includegraphics[width=0.16\textwidth]{figs/17_divmatr_tau1000T100002epsilon1e-07.png} 
        \\
        \includegraphics[width=0.16\textwidth]{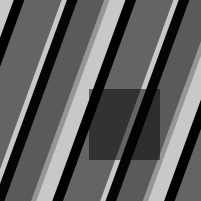} & \fbox{\includegraphics[width=0.15\textwidth]{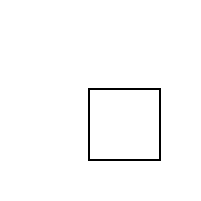}} &
        \includegraphics[width=0.16\textwidth]{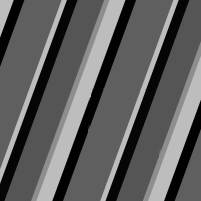}
        \\
        (a) Input $f=v$ & (b) mask 2px & (c) output 
    \end{tabular}
    \caption{Shadow removal on images with hard shadows,.}
    \label{fig:hard_shadow}
\end{figure}

Fig. \ref{fig:soft_shadow} shows the results obtained for the soft-edged shadow removal problem in correspondence of two different masks (wide and thin) and three input images where shadows are characterised by penumbra boundaries of increasing dimension (from left to right) corresponding to different increasing values of $\sigma\in\{0,1,2\}$ in \eqref{eq:s_shadow}.  
\begin{figure}[b!]
    \centering
    \begin{tabular}{c|ccc}
    & $\sigma=0$ & $\sigma = 1$ & $\sigma=2$
    \\
    &
    \includegraphics[width=0.2\textwidth]{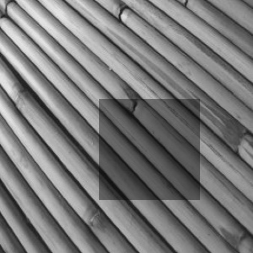} &
    \includegraphics[width=0.2\textwidth]{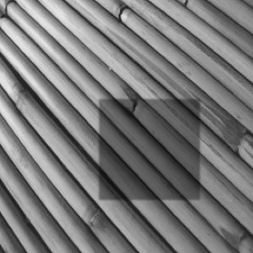} &
    \includegraphics[width=0.2\textwidth]{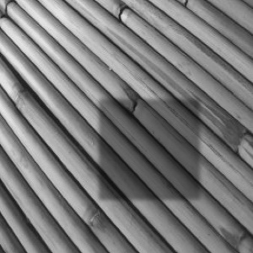}
    \\
    \hline
    6px & & & \\
    \fbox{\includegraphics[width=0.18\textwidth]{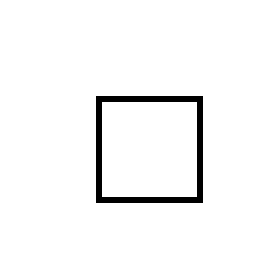}} &
    \includegraphics[width=0.2\textwidth]{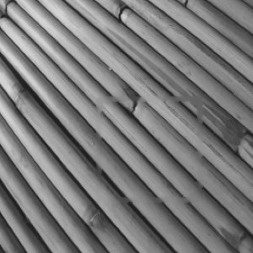} &
    \includegraphics[width=0.2\textwidth]{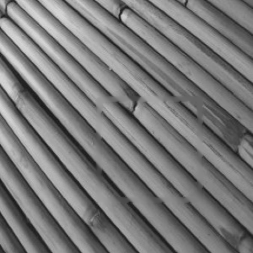} &
    \includegraphics[width=0.2\textwidth]{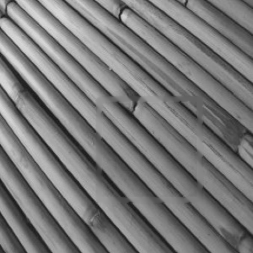}
    \\
    2px & & & \\
    \fbox{\includegraphics[width=0.18\textwidth]{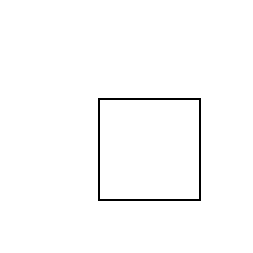}} &
    \includegraphics[width=0.2\textwidth]{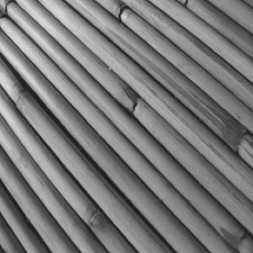} &
    \includegraphics[width=0.2\textwidth]{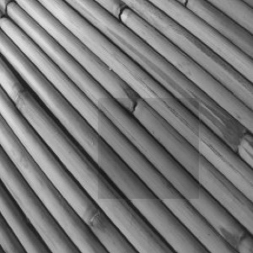} &
    \includegraphics[width=0.2\textwidth]{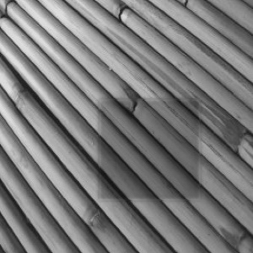}
    \end{tabular}
    \caption{Shadow removal on input images $f$ corrupted by a constant shadow convolved with convolution kernel $G_\sigma$ with increasing standard deviation $\sigma$.
    Two different mask thicknesses, 6px and 2px, are considered.}
    \label{fig:soft_shadow}
\end{figure}
As seen before, the thin mask is a good choice  for the case of hard shadows (i.e. $\sigma=0$), but it becomes less effective as $\sigma$ increases, i.e. as the penumbra region becomes wider, as it cannot cover appropriately the boundary between the two regions $\Omega_{in}$ and $\Omega_{out}$.
In these cases, it is necessary to enlarge the shadow mask. Using a wider one, we observe that the shadows are properly removed and features are well-preserved, in particular near the pixels with limited intensity variation.

Note, however, that using a large mask may result in some drawbacks, as shown in Fig. \ref{fig:sr_draw}, where a wide (6 pixel) mask is used. As this choice enlarges the region where the non-linear TV-type diffusion is induced, typical TV drawbacks are observed. For instance, we observe in Fig. \ref{fig:sr_draw_2} that some artefacts appear in the attempt of connecting circular structures (see the boundary between the yellow and the green region). Furthermore, as it can be observed in Fig. \ref{fig:sr_draw_4},  when the shadow mask is too large some undesired connectivity effect is observed. As it is well-known, the use of TV as a regulariser in inpainting problems forces the minimisation of the length of level lines causing artefacs, a property that can be seen through the use of the standard coarea formula \cite{schoenlieb}.

\begin{figure}[t]
    \centering
    \begin{subfigure}[b]{0.2\textwidth}
        \centering
        \includegraphics[width=\textwidth]{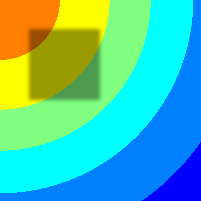}
        \caption{Input $f=v$}
        \label{fig:sr_draw_1}
    \end{subfigure}
    \begin{subfigure}[b]{0.2\textwidth}
        \centering
        \includegraphics[width=\textwidth]{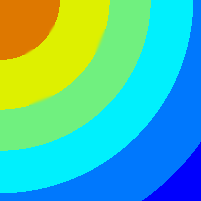} 
        \caption{Output}
        \label{fig:sr_draw_2}
    \end{subfigure}
        \begin{subfigure}[b]{0.2\textwidth}
        \centering
        \includegraphics[width=\textwidth]{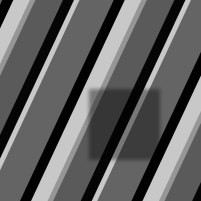}
        \caption{Input $f=v$}
        \label{fig:sr_draw_3}
    \end{subfigure}
        \begin{subfigure}[b]{0.2\textwidth}
        \centering
        \includegraphics[width=\textwidth]{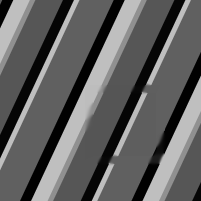} 
        \caption{Output}
        \label{fig:sr_draw_4}
    \end{subfigure}
    \caption{Drawbacks of shadow removal on images with soft shadows, using a 6 pixels wide mask.}
    \label{fig:sr_draw}
\end{figure}

A possible remedy for these two drawbacks consists in employing a different diffusivity function $g$.
As we remarked in Corollary \ref{cor:th}, a slight variation of the non-linear model depending on a  diffusivity function $g_p$ depending on a constant parameter $p\in[1,2)$ as in \eqref{eq:gp} can be considered. The use of higher values of $p$ helps to better identify the regions of the shadow boundary that are crossed by the underlying edges of the image. In these regions, since $g_p(u)=1/|\nabla u|^{p}$, we have the effect of slowing down the diffusion,  thus favouring edge preservation. 

\begin{figure}[b!]
    \centering
    \begin{tabular}{cccc}
    \includegraphics[height=0.2\linewidth]{figs/17_soft.png} &
    \includegraphics[height=0.2\linewidth]{figs/16_soft.png}&
    \includegraphics[height=0.2\linewidth]{figs/44_2.png}
    &
    \includegraphics[height=0.2\linewidth]{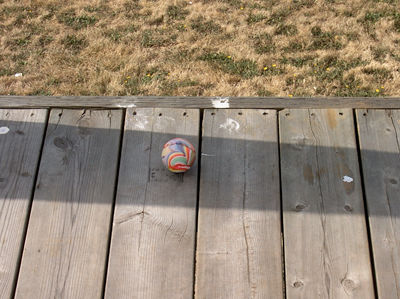}
    \\
    \includegraphics[height=0.2\linewidth]{figs/17_soft_divmatr_tau1000T100010epsilon1e-07.png} &
    \includegraphics[height=0.2\linewidth]{figs/16_soft_divmatr_tau1000T100008epsilon1e-07.png} &
    \includegraphics[height=0.2\linewidth]{figs/44_2_divmatr_tau1000T100006epsilon1e-07.png} &
    \includegraphics[height=0.2\linewidth]{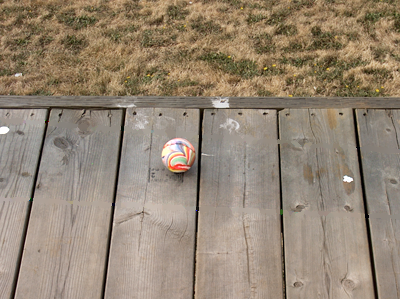}
    \\
    \includegraphics[height=0.2\linewidth]{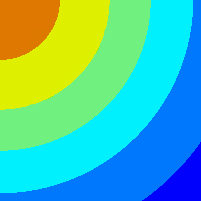} &
    \includegraphics[height=0.2\linewidth]{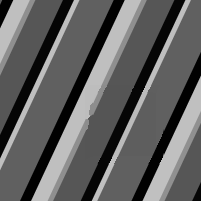} &
    \includegraphics[height=0.2\linewidth]{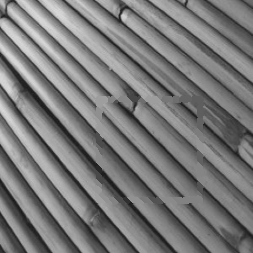} &
    \includegraphics[height=0.2\linewidth]{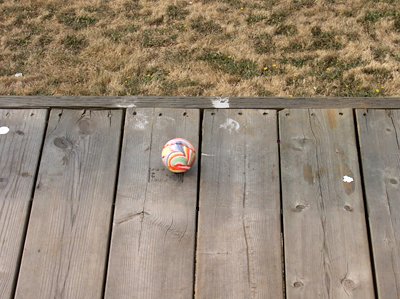}
    \end{tabular}
    \caption{Results obtained using the diffusivity function $g_p$ with $p=1$ (second row) and $p=1.9$ (third row).}
    \label{fig:otherp}
\end{figure}

In Figure \ref{fig:otherp} we show some results with $p=1$ (second row) and $p=1.9$ (third row) on the images of Fig. \ref{fig:sr_draw} and on two other examples. The use of a possible space-variant strategy adapting the value of $p$ to the local image content would mitigate the previously observed drawbacks and, as such, is an interesting direction of future research.


In the following, we compare qualitatively and quantitatively our results with the ones obtained by means of the standard linear osmosis model \eqref{eq:linear} and its anisotropic variant \eqref{eq:aniso} proposed in \cite{Parisotto2019AnisotropicOF} to limit the smoothing effect due to Laplace diffusion on the shadow boundary. 

In Fig. \ref{fig:compare_hard}, some examples of hard shadow removal are shown.
We notice that the linear model is effective in removing the shadow, but produces visible blurring on the shadow boundary. The anisotropic model is able to connect well the underlying features of the image, but does not well balance the image intensity between the inside and the outside of the shadow. Our model works well for both tasks, achieving very good SSIM values in comparison with the original ground truth images.

\begin{figure}[h!]
    \centering
    \begin{tabular}{c|ccc}
        Input $f=v$ & Linear \cite{weickert-ssvm} & Anisotropic \cite{Parisotto2019AnisotropicOF} & Non-linear  \\
        \includegraphics[width=0.2\textwidth]{figs/17.png} &
        \includegraphics[width=0.2\textwidth]{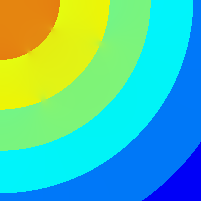} &
        \includegraphics[width=0.2\textwidth]{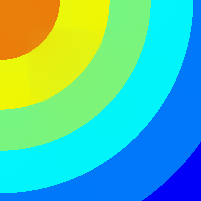} &
        \includegraphics[width=0.2\textwidth]{figs/17_divmatr_tau1000T100002epsilon1e-07.png}
        \\
        (a) & 0.9943 & 0.9966 & \textbf{0.9979}
        \\
        \includegraphics[width=0.2\textwidth]{figs/44_0.png} &
        \includegraphics[width=0.2\textwidth]{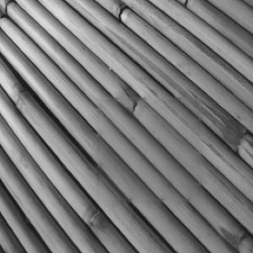} &
        \includegraphics[width=0.2\textwidth]{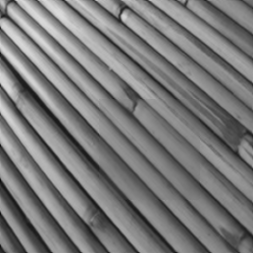} &
        \includegraphics[width=0.2\textwidth]{figs/44_0_divmatr_tau1000T100002epsilon1e-07.png}
        \\
        (b) & 0.9624 & 0.9544 & \textbf{0.9951}
    \end{tabular}
    \caption{Comparison between linear, anisotropic and nonlinear osmosis models on hard-shadowed images, with associated SSIM values.}
    \label{fig:compare_hard}
\end{figure}

In Fig. \ref{fig:compare_soft} we compare the results obtained by the three different models on soft-shadowed images and real images.
We observe that around the mask region our model performs as good as the anisotropic one. However, while the anisotropic model tends to make the whole image slightly blurred (due to the smoothing kernels required to defined the diffusion tensor $\WW$),  the non-linear model preserves the sharpness of the image everywhere.

\begin{figure}[h!]
    \centering
    \begin{tabular}{c|ccc}
        Input $f=v$ & Linear \cite{weickert-ssvm} & Anisotropic \cite{Parisotto2019AnisotropicOF} & Non-linear \\
        \includegraphics[width=0.2\textwidth]{figs/17_soft.png} &
        \includegraphics[width=0.2\textwidth]{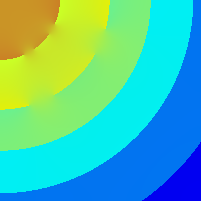} &
        \includegraphics[width=0.2\textwidth]{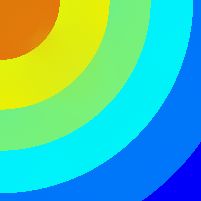} &
        \includegraphics[width=0.2\textwidth]{figs/17_soft_divmatr_tau1000T100010epsilon1e-07.png}
        \\
        (c) & 0.9712 & \textbf{0.9937} & 0.9931
        \\
        \includegraphics[width=0.2\textwidth]{figs/44_2.png} &
        \includegraphics[width=0.2\textwidth]{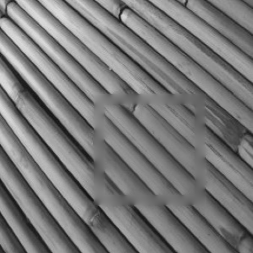} &
        \includegraphics[width=0.2\textwidth]{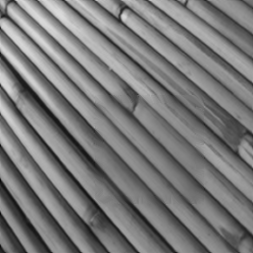} &
        \includegraphics[width=0.2\textwidth]{figs/44_2_divmatr_tau1000T100006epsilon1e-07.png}
        \\
        (d) & 0.9412 & 0.9464 & \textbf{0.9538}
        \\
        \hline & & &
        \\
        \includegraphics[width=0.2\textwidth]{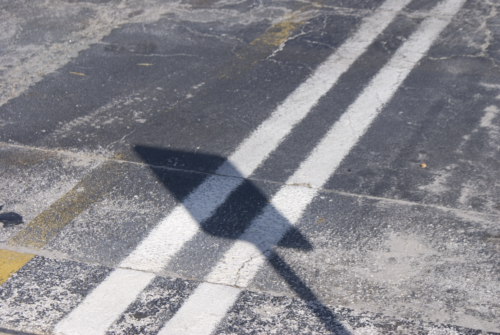} &
        \includegraphics[width=0.2\textwidth]{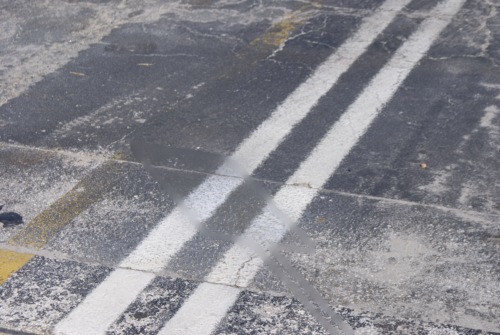} &
        \includegraphics[width=0.2\textwidth]{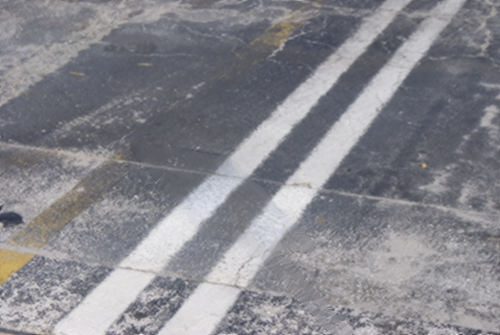} &
        \includegraphics[width=0.2\textwidth]{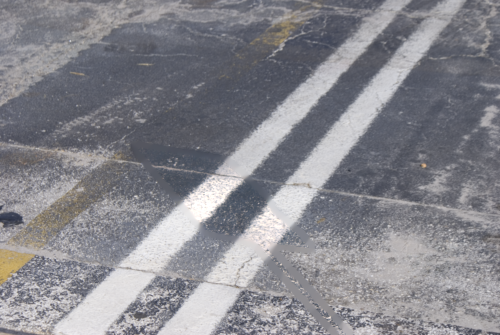}
        \\
        (e) & & &
        \\
        \includegraphics[width=0.2\textwidth]{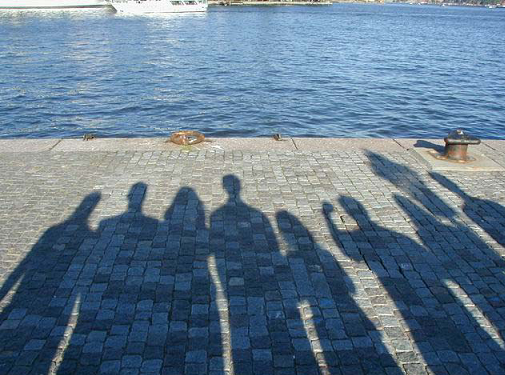} &
        \includegraphics[width=0.2\textwidth]{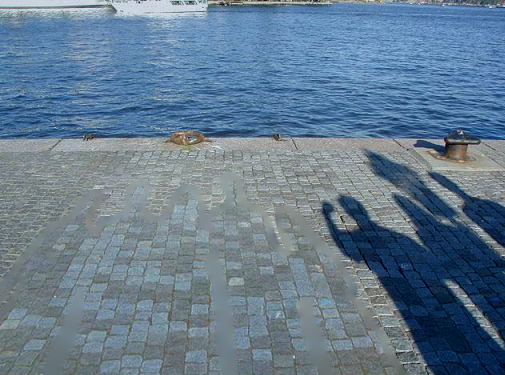} &
        \includegraphics[width=0.2\textwidth]{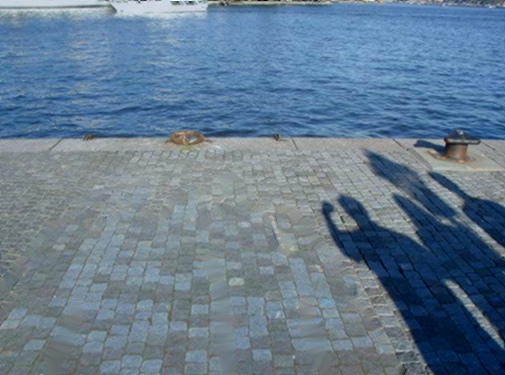} &
        \includegraphics[width=0.2\textwidth]{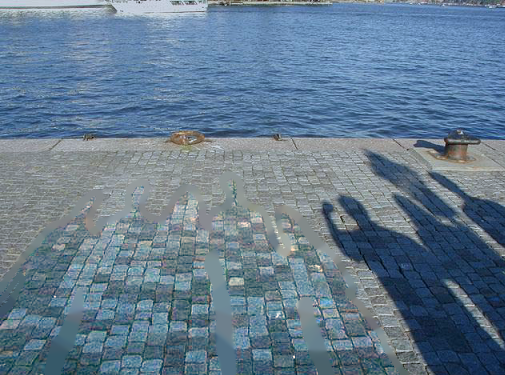}
        \\
        (f) & & &
        \\
        \includegraphics[width=0.2\textwidth]{figs/image.png} &
        \includegraphics[width=0.2\textwidth]{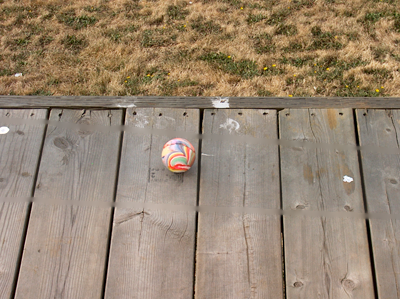} &
        \includegraphics[width=0.2\textwidth]{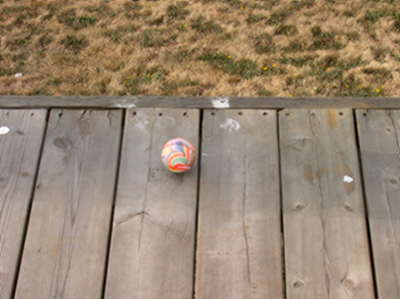} &
        \includegraphics[width=0.2\textwidth]{figs/image_divmatr_tau13000T100003epsilon1e-07.png}
        \\
        (g) & & &
    \end{tabular}
    \caption{Results of osmosis models on artificially soft-shadowed images and natural images, with associated SSIM when computable.}
    \label{fig:compare_soft}
\end{figure}

\medskip 

We now compare the computational efficiency required to compute the numerical solution of the non-linear model by using the semi-implicit scheme \eqref{eq:semi-implicit} with the linear \cite{weickert-ssvm} (solved with a fully implicit scheme with $\tau=10^3$), and the anisotropic model \cite{Parisotto2019AnisotropicOF} (solved by exponential integration).
\begin{table}[h!]
\centering
\begin{tabular}{c | r r | r r | r r}
     & \multicolumn{2}{c}{Linear \cite{weickert-ssvm}} & \multicolumn{2}{|c}{Anisotropic \cite{Parisotto2019AnisotropicOF}} & \multicolumn{2}{|c}{Non-linear} \\
    \hline (a) & 3.98 & (7) & 27.39 & (4) & \textbf{1.25} & \textbf{(3)}
    \\
    \hline (b) & 6.44 & (19) & 29.71 & (5) & \textbf{1.45} & \textbf{(4)} 
    \\
    \hline (c) & 11.54 & (20) & 20.77 & \textbf{(2)} & \textbf{1.80} & (3)
    \\
    \hline (d) & 8.66 & (26) & 18.70 & \textbf{(3)} & \textbf{3.53} & (8) 
    \\
    \hline (e) & 31.63 & (9) & 1307.06 & (10) & \textbf{10.37} & \textbf{(3)} \\
    \hline (f) & 105.55 & (30) &  2592.00 & (18) & \textbf{27.08} & \textbf{(7)} \\
    \hline (g) & 19.14 & (9) & 723.48 & (10) & \textbf{8.62} & \textbf{(4)} 
\end{tabular}
\caption{Execution time (in seconds) and number of iterations till convergence (in brackets) for results in Fig. \ref{fig:compare_hard} (a,b) and Fig. \ref{fig:compare_soft} (c,d,e,f,g). 
}
\label{tab:it_time}
\end{table}
In this respect, we report in Table \ref{tab:it_time}  the execution time (in seconds) and the number of iterations (in brackets) for the three models  when applied to the images of Figure \ref{fig:compare_hard} and \ref{fig:compare_soft}, using as same stopping criteria a mean squared error w.r.t. to the ground truth smaller then a given tolerance for artificially corrupted images (a,b,c,d), and a sufficiently small ($10^{-3}$) relative change for real-world images (e,f,g).
From a visual inspection of Fig. \ref{fig:compare_soft}, we can observe that the results obtained by the anisotropic model are qualitatively comparable with the non-linear ones, although they require a much more significant computational effort, see Table \ref{tab:it_time}. This is due to the more sophisticated nature of the anisotropic model \eqref{eq:aniso} which requires the computation of the matrix field $\WW$ and its possible update at every iteration. 
Interestingly, the proposed non-linear model appears faster than the linear model in reaching convergence. This is due to the action of the diffusivity term $g$, which accelerates the osmosis process on $\Omega_{in}\cup\Omega_{out}$.

Finally, we provide an extensive list of comparisons with alternative approaches proposed for shadow removal Illuminant invariant approaches are used, e.g., in
\cite{Finlayson1,Finlayson2}, where an "intrinsic" image is extracted from the input image and then shadow removal is treated as reintegration problem, using this image as a guide to derive gradient information or similarity measures between shadowed and lighted regions. A variational approach to shadow and spot-light removal has been proposed in \cite{HKLM21}, in the context of image decomposition. Colour transfer techniques, such as \cite{Liu,Shor}, use gradient information to recover the shadow-free image by solving a Poisson equation.
Other methods consists in scaling the shadow region by a suitable factor, modelling the shadow as uniform \cite{Finlayson3} or even non-uniform \cite{Arbel}.
Another class of solvers treat the task as a matting problem, considering the shadow pixels as foreground and the lit pixels as background \cite{Wu1,Wu2}.

\begin{figure}[h!]
    \centering
    \begin{tabular}{c|c|cccc}
        \includegraphics[width=0.15\textwidth]{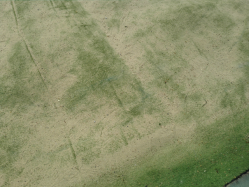} &
        \includegraphics[width=0.15\textwidth]{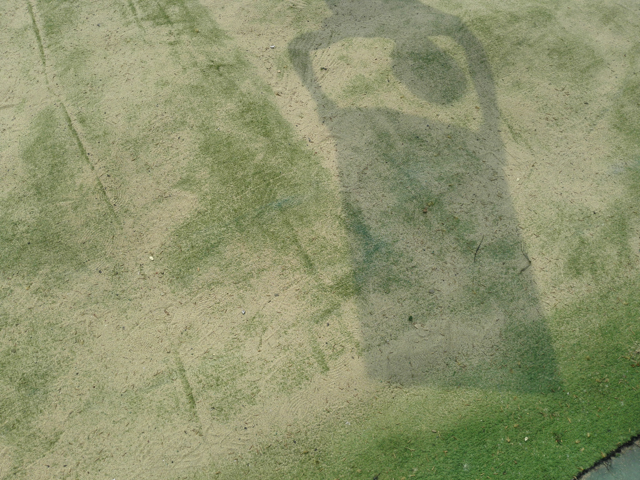} &
        \includegraphics[width=0.15\textwidth]{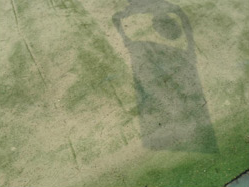} &
        \includegraphics[width=0.15\textwidth]{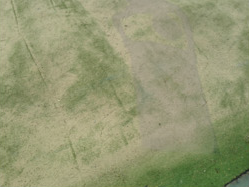} &
        \includegraphics[width=0.15\textwidth]{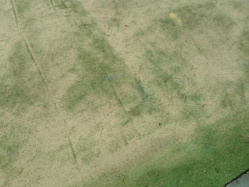} &
        \includegraphics[width=0.15\textwidth]{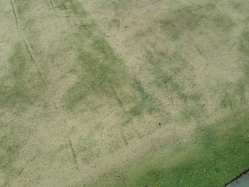}
        \\ 
         & & 0.9172 & 0.9603 & 0.9667 & \textbf{0.9676}
        \\
        \includegraphics[width=0.15\textwidth]{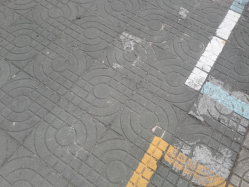} &
        \includegraphics[width=0.15\textwidth]{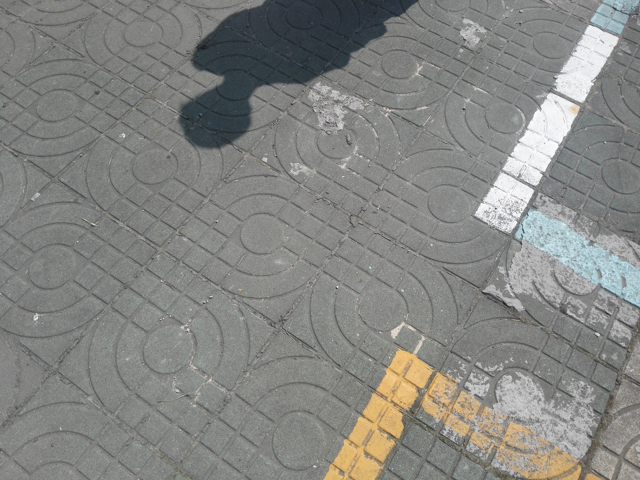} &
        \includegraphics[width=0.15\textwidth]{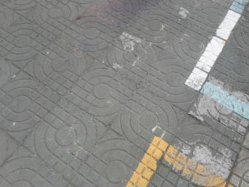} &
        \includegraphics[width=0.15\textwidth]{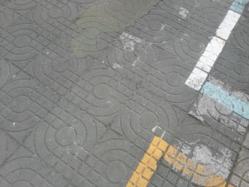} &
        \includegraphics[width=0.15\textwidth]{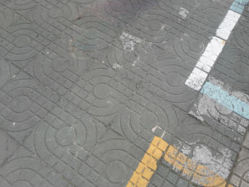} &
        \includegraphics[width=0.15\textwidth]{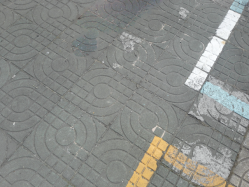}
        \\
         & & 0.7816 & 0.9482 & 0.9557 & \textbf{0.9726}
        \\
        \includegraphics[width=0.15\textwidth]{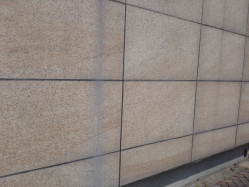} &
        \includegraphics[width=0.15\textwidth]{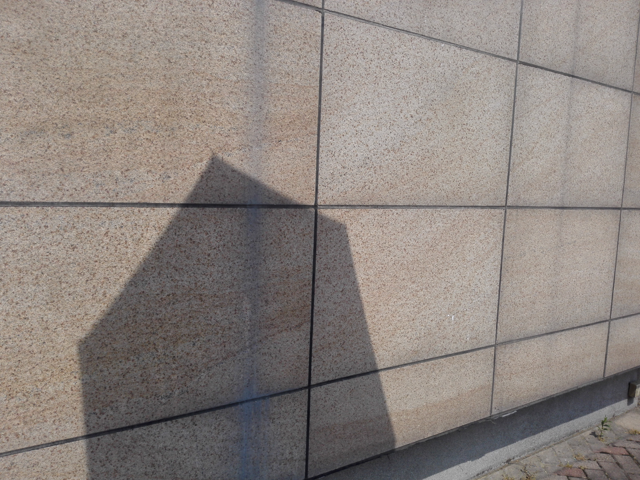} &
        \includegraphics[width=0.15\textwidth]{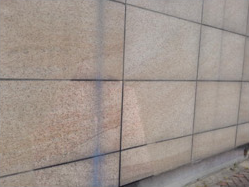} &
        \includegraphics[width=0.15\textwidth]{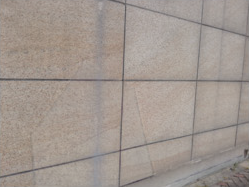} &
        \includegraphics[width=0.15\textwidth]{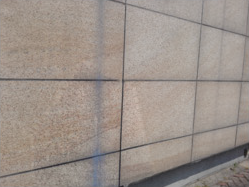} &
        \includegraphics[width=0.15\textwidth]{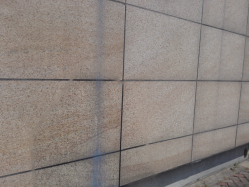}
        \\
         & & 0.8288 & 0.9092 & 0.9350 & \textbf{0.9488}
        \\
        \includegraphics[width=0.15\textwidth]{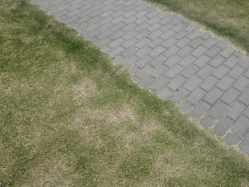} &
        \includegraphics[width=0.15\textwidth]{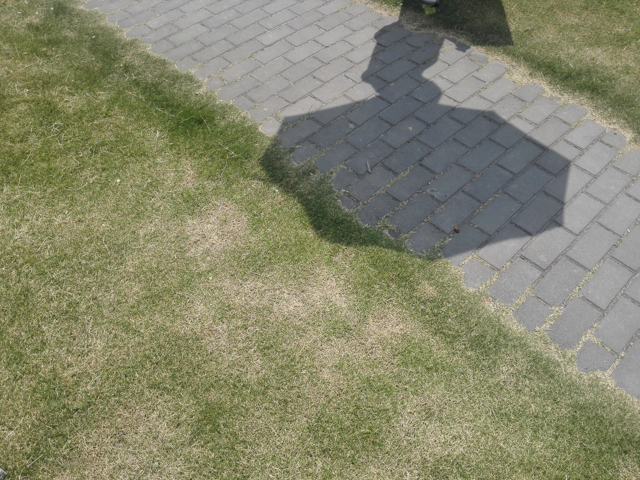} &
        \includegraphics[width=0.15\textwidth]{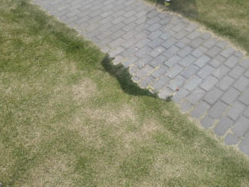} &
        \includegraphics[width=0.15\textwidth]{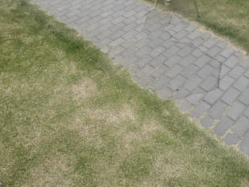} &
        \includegraphics[width=0.15\textwidth]{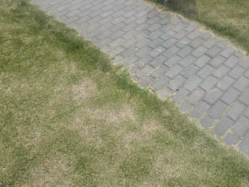} &
        \includegraphics[width=0.15\textwidth]{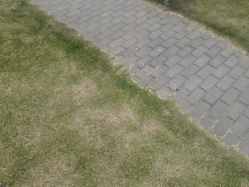}
        \\
         & & 0.8335 & 0.9312 & 0.9265 & \textbf{0.9650}
        \\
        \includegraphics[width=0.15\textwidth]{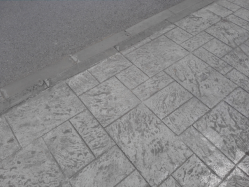} &
        \includegraphics[width=0.15\textwidth]{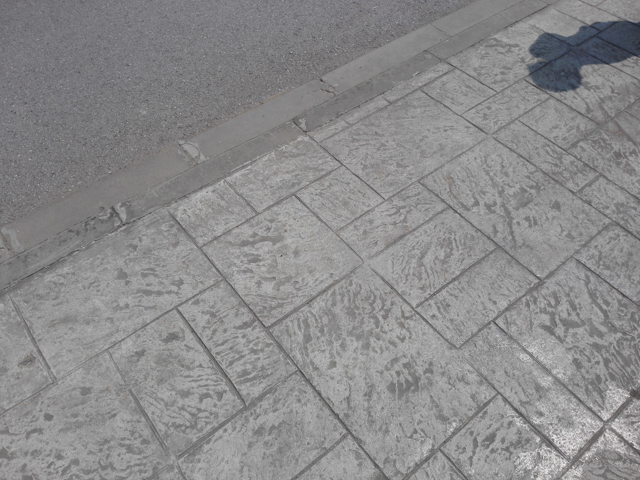} &
        \includegraphics[width=0.15\textwidth]{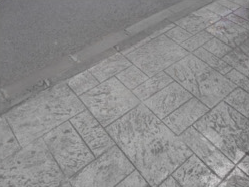} &
        \includegraphics[width=0.15\textwidth]{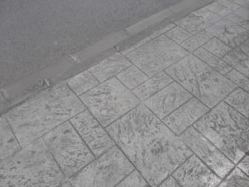} &
        \includegraphics[width=0.15\textwidth]{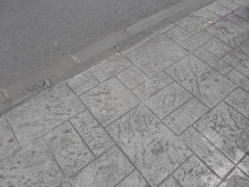} &
        \includegraphics[width=0.15\textwidth]{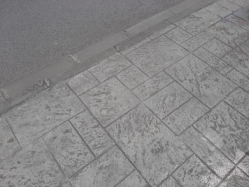}
        \\
         & & 0.8106 & 0.9500 & 0.9531 & \textbf{0.9733}
        \\
        \includegraphics[width=0.15\textwidth]{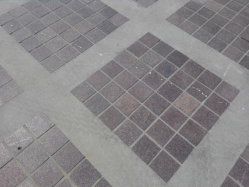} &
        \includegraphics[width=0.15\textwidth]{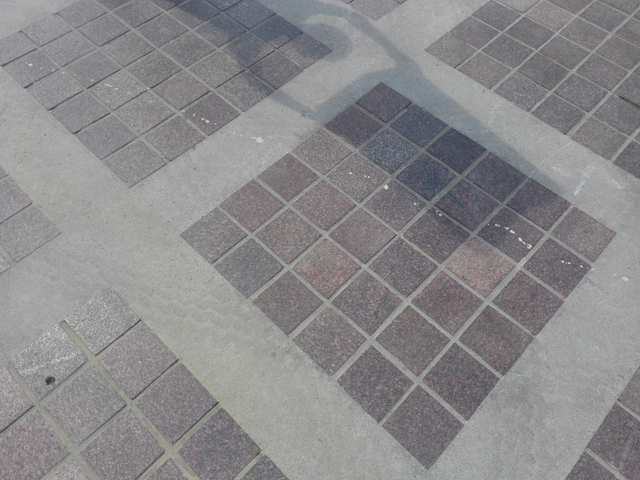} &
        \includegraphics[width=0.15\textwidth]{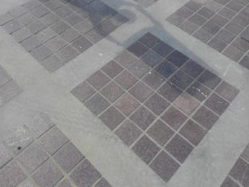} &
        \includegraphics[width=0.15\textwidth]{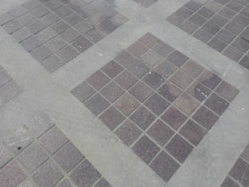} &
        \includegraphics[width=0.15\textwidth]{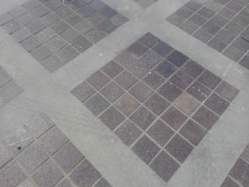} &
        \includegraphics[width=0.15\textwidth]{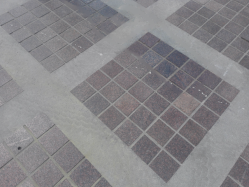}
        \\
         & & 0.6657 & 0.8174 & 0.8134 & \textbf{0.9008}
         \\
         \hline Original & Input $f=v$ & \cite{Guo} & \cite{Le} & \cite{Benalia} & ours
        \end{tabular}
    \caption{Comparison between shadow removal approaches, with associated SSIM values.}
    \label{fig:compare_others}
\end{figure}

In Fig. \ref{fig:compare_others} we 
report a comparison between the results obtained by the proposed non-linear osmosis model and three of the most recent and successful methods for shadow removal\footnote{The test images have been downloaded from the ISTD dataset, available at \url{https://github.com/DeepInsight-PCALab/ST-CGAN}.}: a deep learning approach, \cite{Le}, a region-based shadow detection and removal \cite{Guo} and, finally, a variational approach based on a non-local version of the osmosis model, \cite{Benalia} followed by a further contrast correction step as described by the authors.
We observe that our method produces visually pleasant results compared to state-of-the-art methods and more sophisticated variants of the standard osmosis model \cite{Benalia} further requiring a contrast correction step. Quantitatively, it outperforms all other approaches in terms of SSIM values.

\subsection{Spot-light removal}\label{sec:lirem}

Recalling Section \ref{sec:appl}, an analogous use of the osmosis model \eqref{eq:sr_model} can be done  for the spot-light removal problem. Here, a synthetic spot-lighted image $f$ is  obtained by Hadamard product $f \odot (G_{\sigma} * \ell)$ between a light- spot-free image $f^*$ and a light image (with bounded lighted region $L\subset \Omega$ ) $\ell$ defined as
\begin{equation}\label{eq:l_light}
    \ell =
    \begin{cases}
    c \quad &on \quad L \\
    1 & on \quad \Omega\setminus L,
    \end{cases}
\end{equation}
with $c>1$. Positive values of the standard deviation $\sigma$ produce soft-lighted images, which require a larger mask. Fig. \ref{fig:lr2} shows the results for four increasing levels of $\sigma$. Analogous comments to the ones made in the case of shadow removal applications can be done in this case, too.

\begin{figure}[h]
    \centering
    \begin{tabular}{ccc|cc}
    $\sigma = 0$ &
    \includegraphics[height=0.18\linewidth]{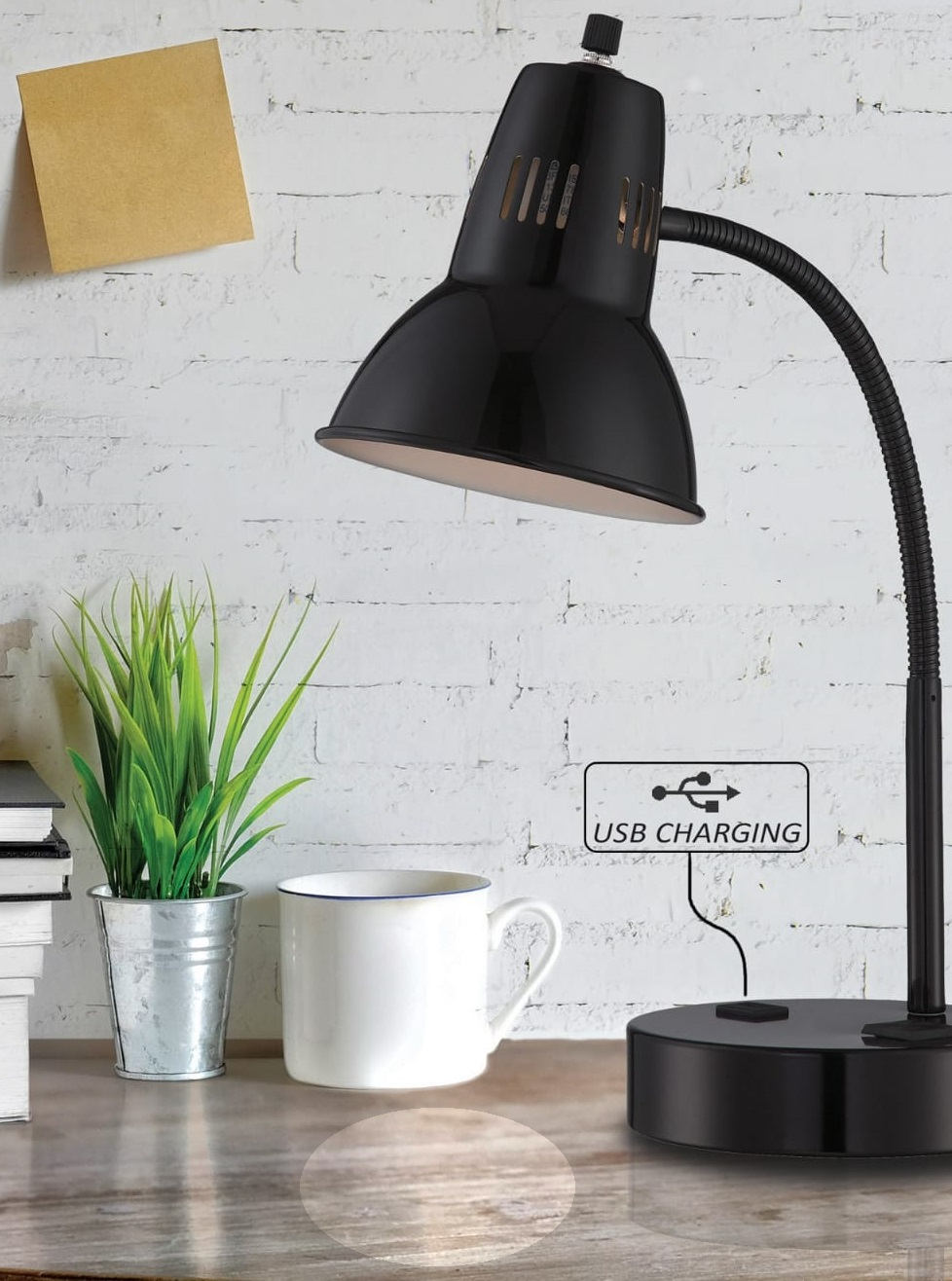}    &
    \includegraphics[height=0.18\linewidth]{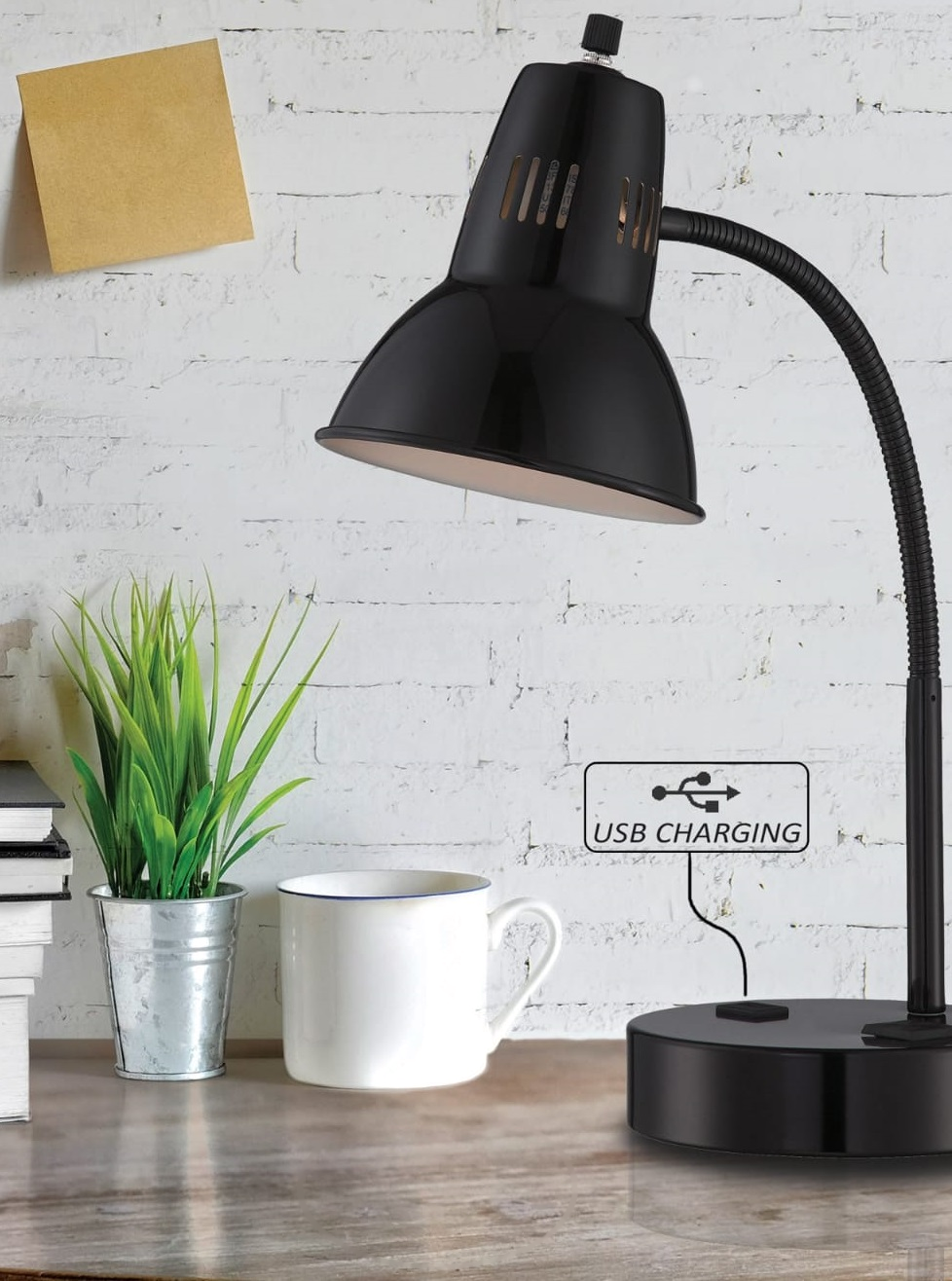}  &
    \includegraphics[height=0.18\linewidth]{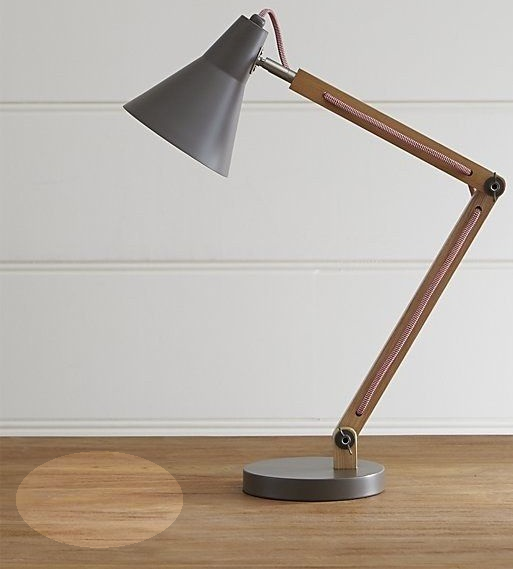} &
    \includegraphics[height=0.18\linewidth]{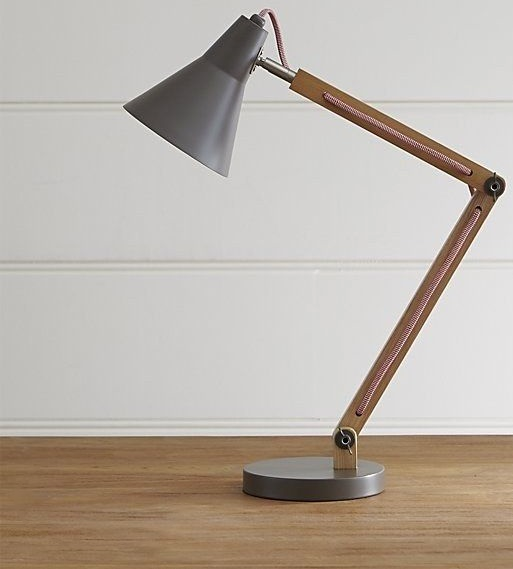}
    \\
    $\sigma = 2$ &
    \includegraphics[height=0.18\linewidth]{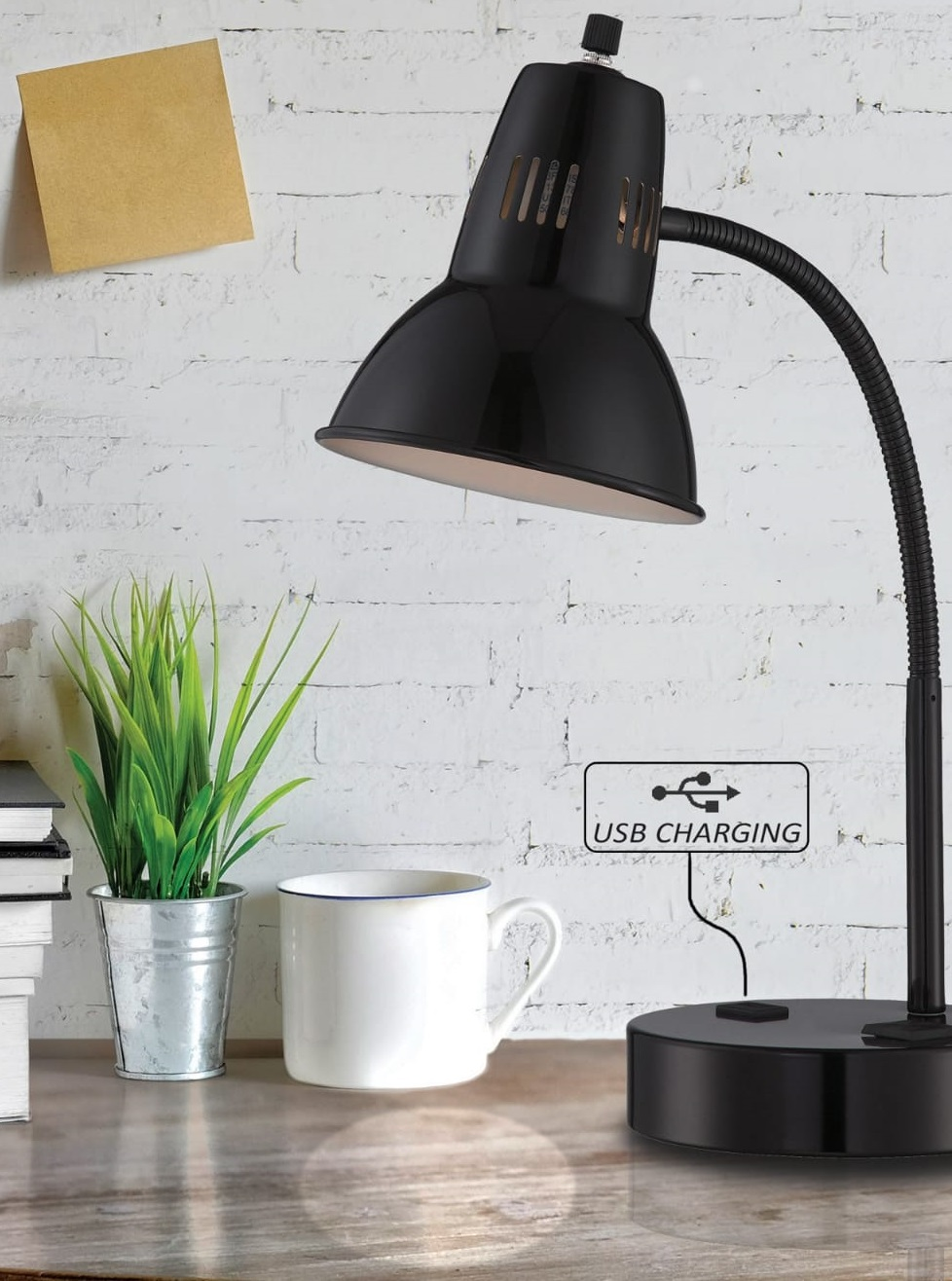}  &
    \includegraphics[height=0.18\linewidth]{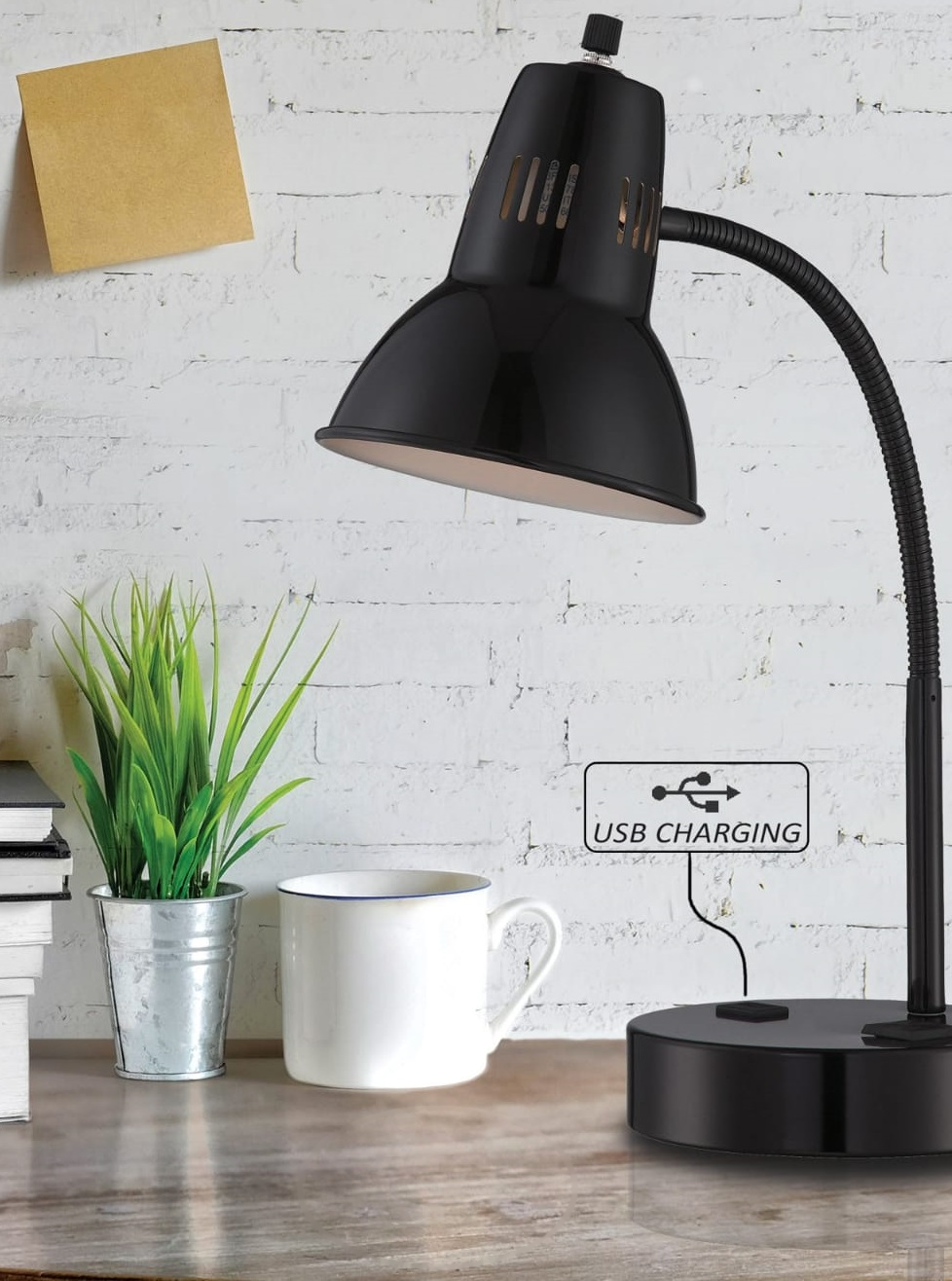}  &
    \includegraphics[height=0.18\linewidth]{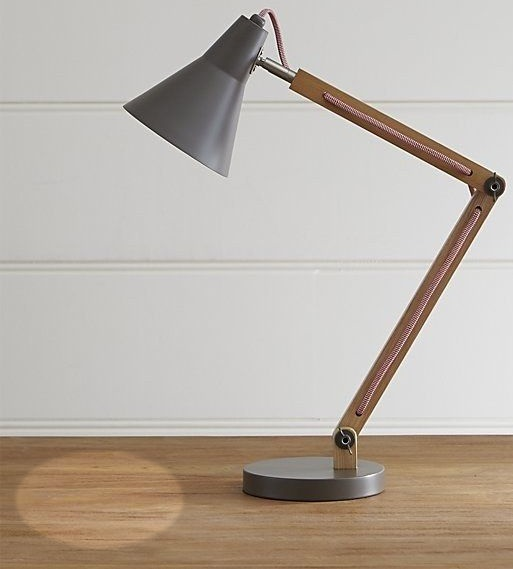} &
    \includegraphics[height=0.18\linewidth]{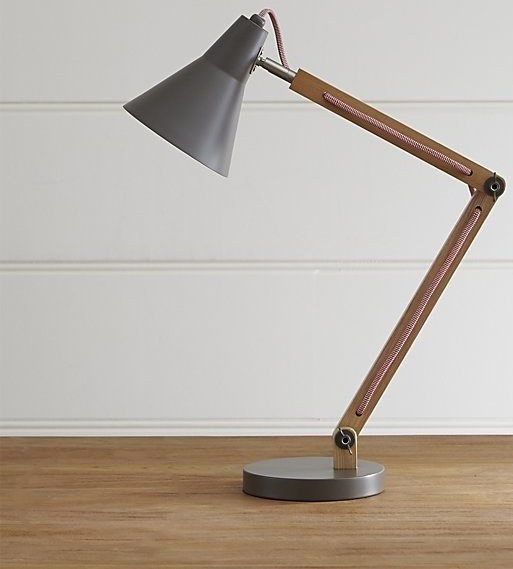}
    \\
    $\sigma = 5$ &
    \includegraphics[height=0.18\linewidth]{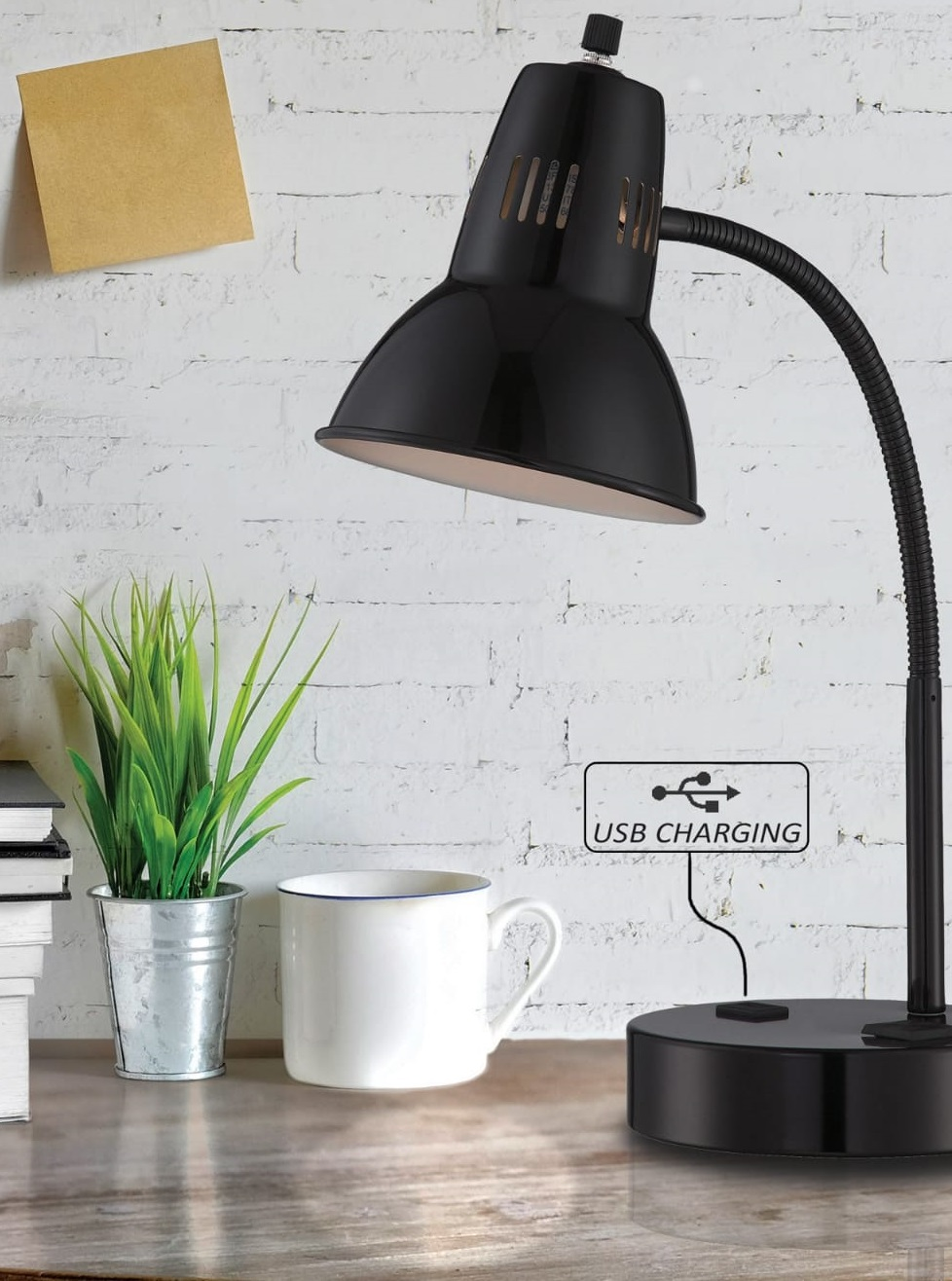}  &
    \includegraphics[height=0.18\linewidth]{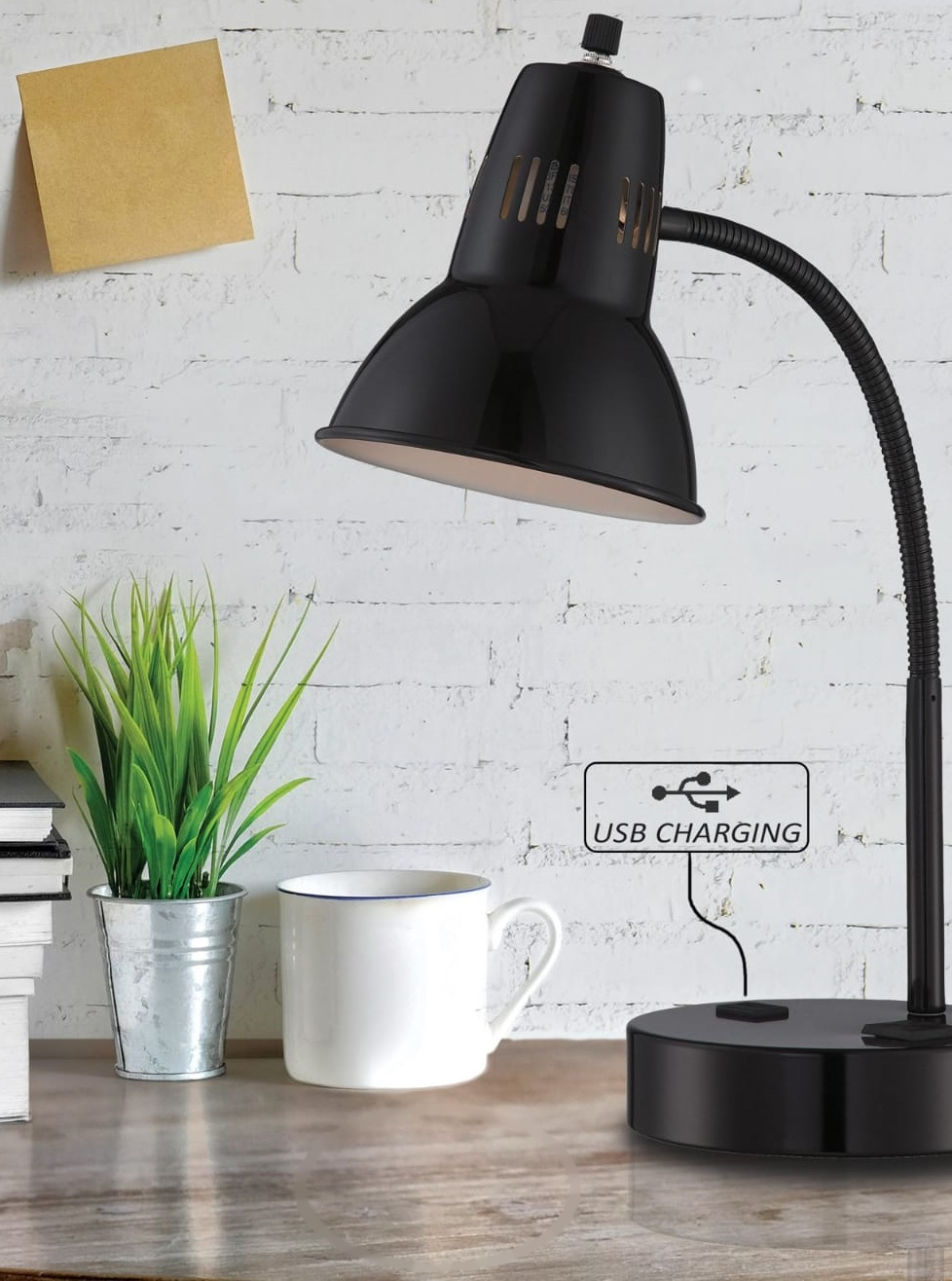}  &
    \includegraphics[height=0.18\linewidth]{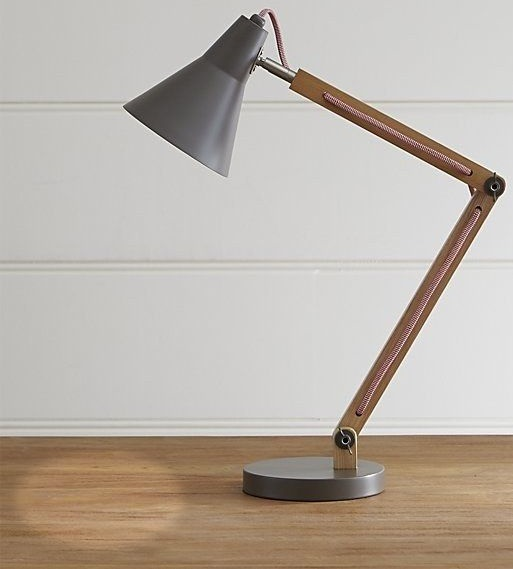} &
    \includegraphics[height=0.18\linewidth]{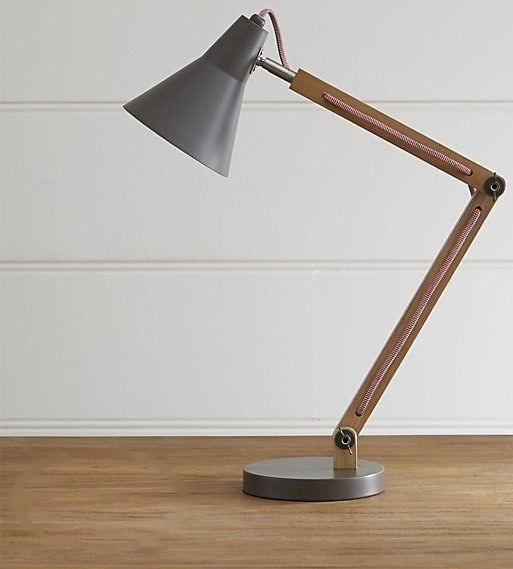}
    \\
    $\sigma = 8$ &
    \includegraphics[height=0.18\linewidth]{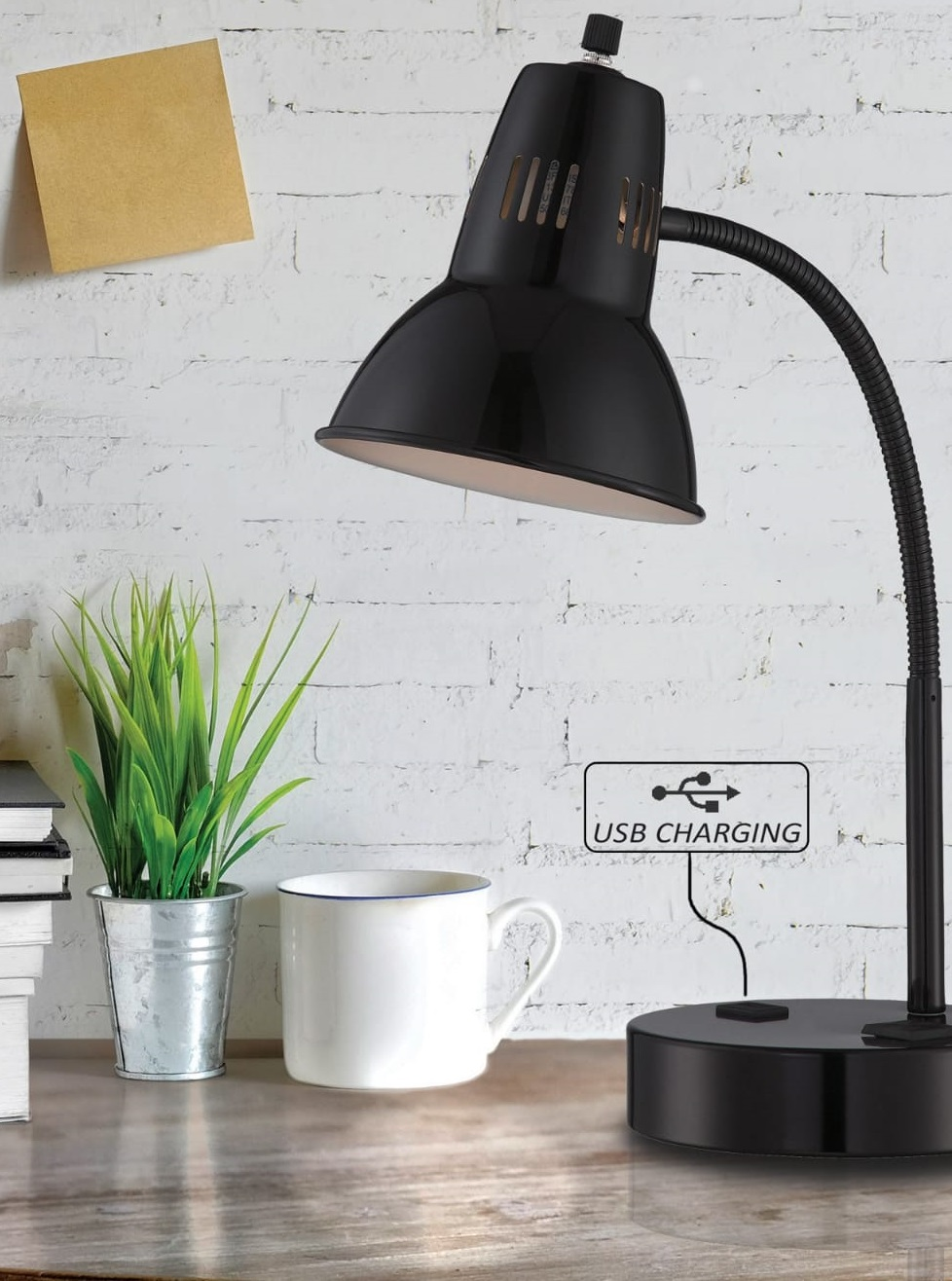}  &
    \includegraphics[height=0.18\linewidth]{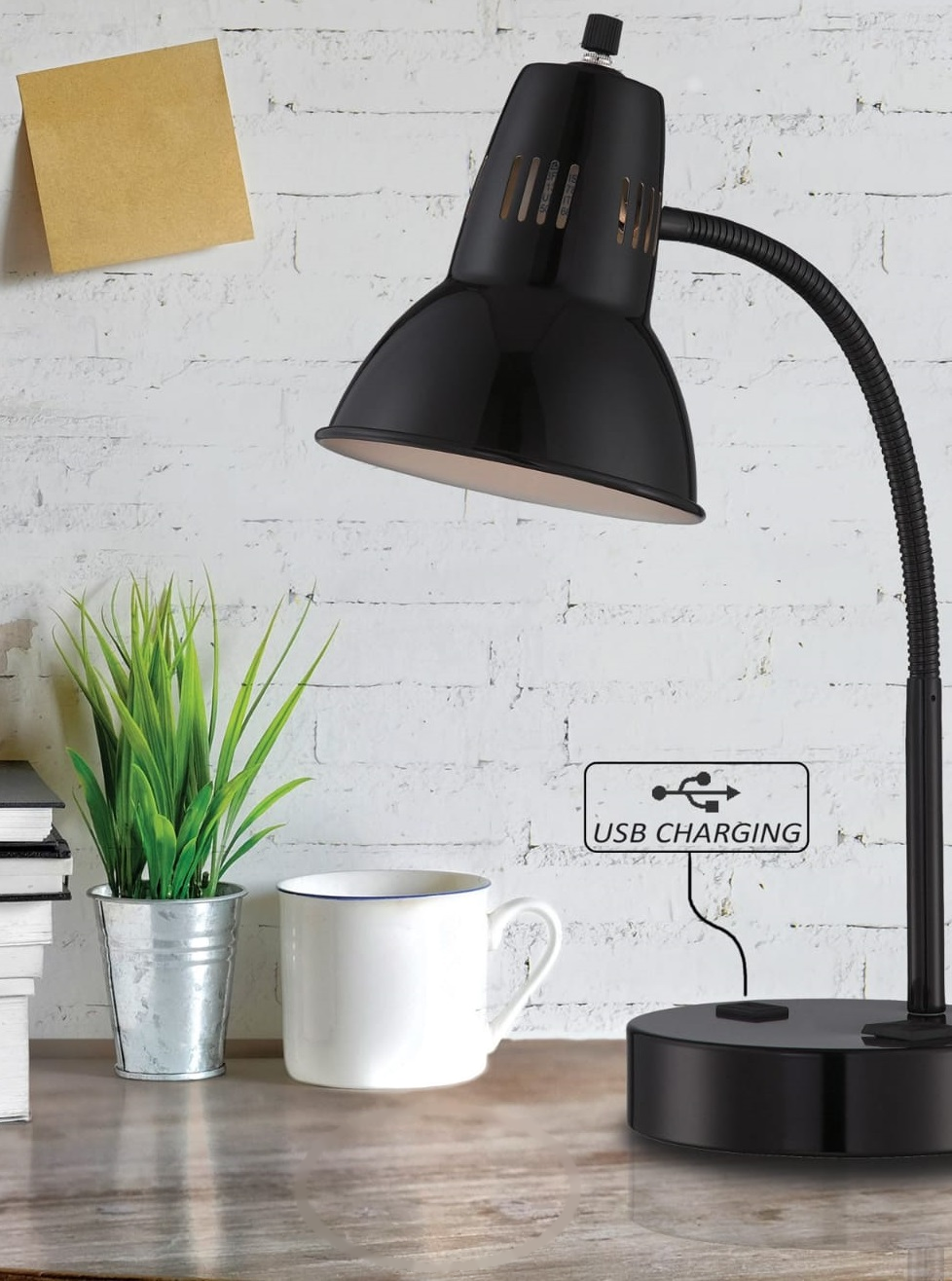}  &
    \includegraphics[height=0.18\linewidth]{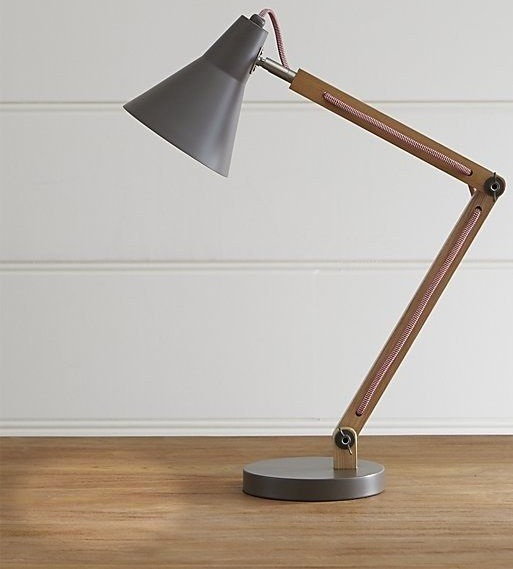} &
    \includegraphics[height=0.18\linewidth]{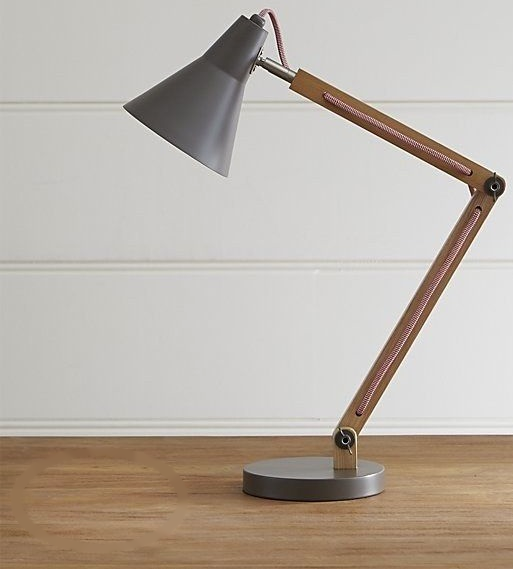}\\
    & (a) Input & (b) Results & (c) Input & (d) Results 
    \end{tabular}
    \caption{Spot-light removal results. Input images were obtained by convolving the light-free image with a Gaussian kernel with variance $\sigma$, whose values are reported on the left column.}
    \label{fig:lr2}
\end{figure}

\subsection{Compact data representation}\label{sec:cdr}

As recalled in Section \ref{sec:cdr}, image osmosis can also be applied to the problem of compact data representation, by means of a proper definition of the drift term.
Given an image $v$ with average grey value $\mu_v$, we thus consider $M=\Omega_b$ to be a pre-computed edge-mask via standard segmentation algorithms (Canny, Sobel, \ldots) and define the drift term as in \eqref{eq:cdr_drift}. Explicitly, the two equations guiding the osmosis evolution in this case then read:
\begin{align} \label{eq:CDR}
    \begin{cases}
    \partial_t u = \dv\left( \frac{\nabla u}{|\nabla u|} \right) \quad &\text{on } (\Omega\setminus\Omega_b) \times [0,T]\\
    \\
    \partial_t u = \dv\left( \frac{\nabla u-\frac{\nabla v}{v}u}{\left|\nabla u-\frac{\nabla v}{v}u\right|} \right) & \text{on } \Omega_b \times [0,T]    \end{cases}
\end{align}
with initial and boundary conditions defined as in \eqref{eq:g-osmosis1}.
By setting $f\equiv\mu_v$ on $\Omega$ and making use of the actual values of the reference image $v$ only on edges to define the drift term $\dd$ on $\Omega_b$, by evolving \eqref{eq:CDR}  the information is diffused on $\Omega\setminus\Omega_b$, which can be thought of as the piecewise-constant region of the image.
The process thus converges to an approximation $w$ of the reference image $v$.

\begin{figure}[b!]
    \centering
    \begin{tabular}{cc|cc}
        \includegraphics[width=0.2\textwidth]{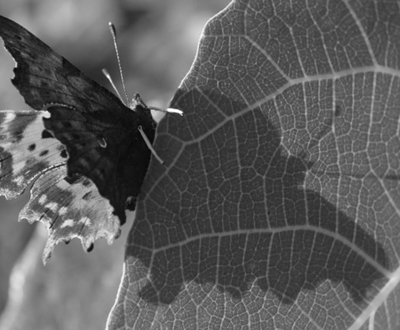} &
        \includegraphics[width=0.2\textwidth]{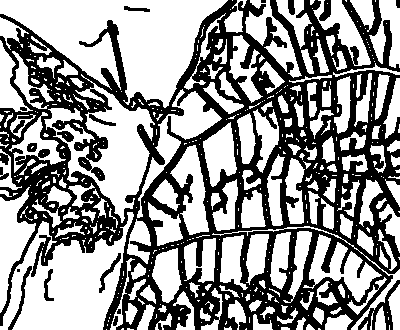} &
        \includegraphics[width=0.2\textwidth]{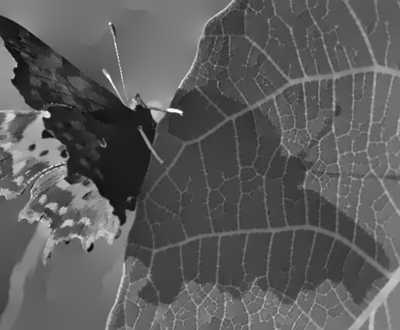} &
        \includegraphics[width=0.2\textwidth]{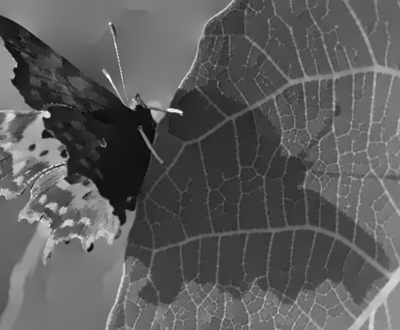} 
        \\
        & & 0.8581 & \textbf{0.8632}
        \\
        \includegraphics[width=0.2\textwidth]{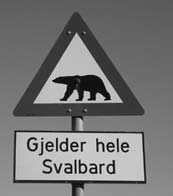} &
        \includegraphics[width=0.2\textwidth]{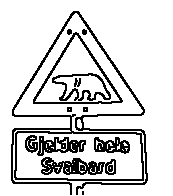} &
        \includegraphics[width=0.2\textwidth]{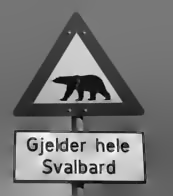} &
        \includegraphics[width=0.2\textwidth]{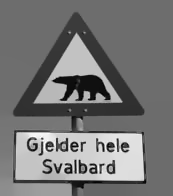} 
        \\
        & & 0.9432 & \textbf{0.9626}
        \\
        \includegraphics[width=0.2\textwidth]{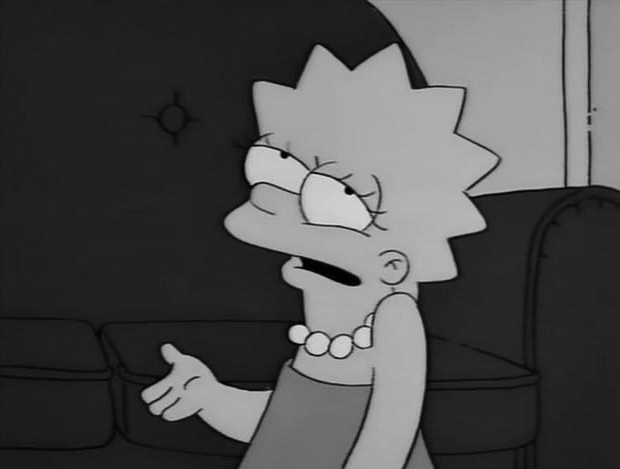} &
        \includegraphics[width=0.2\textwidth]{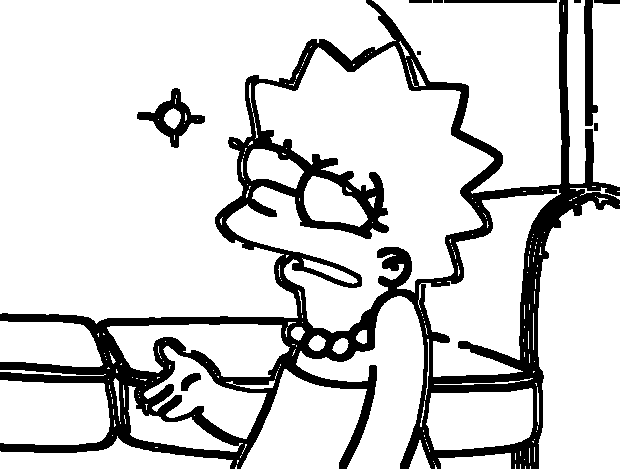} &
        \includegraphics[width=0.2\textwidth]{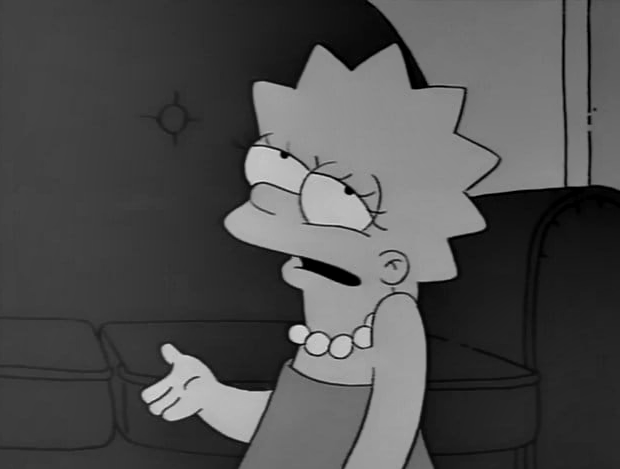} &
        \includegraphics[width=0.2\textwidth]{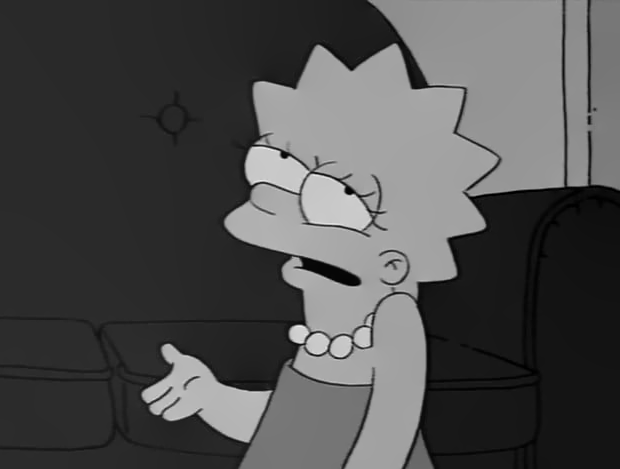} 
        \\
        & & 0.9058 & \textbf{0.9453}
        \\
        \includegraphics[width=0.2\textwidth]{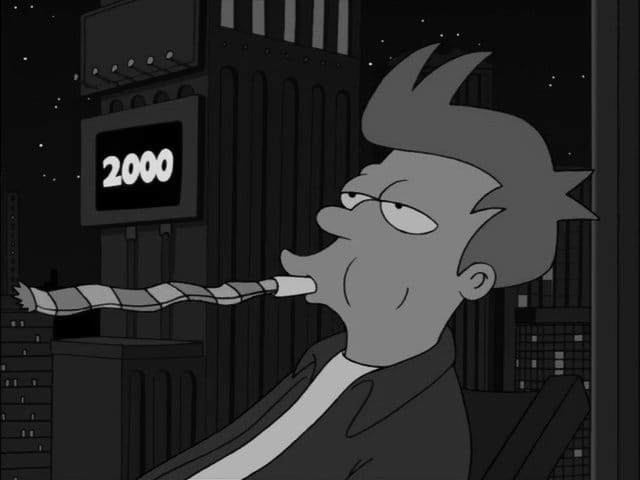} &
        \includegraphics[width=0.2\textwidth]{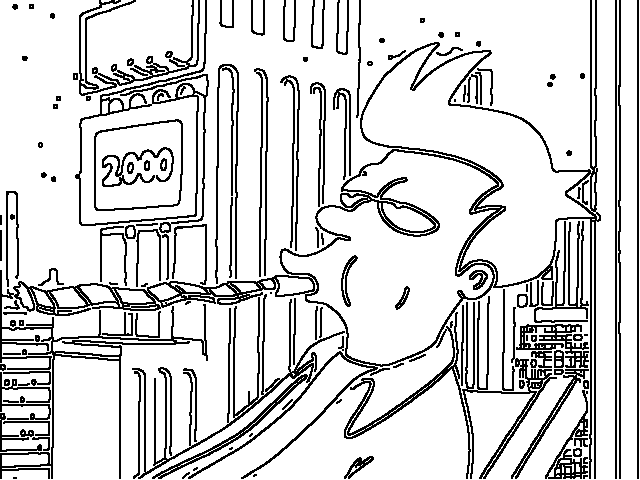} &
        \includegraphics[width=0.2\textwidth]{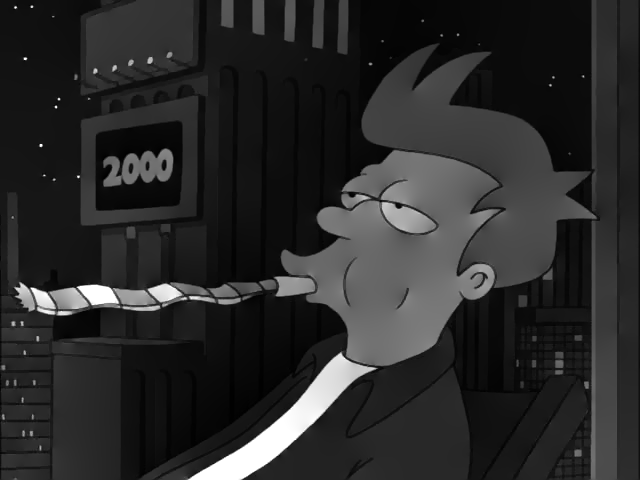} &
        \includegraphics[width=0.2\textwidth]{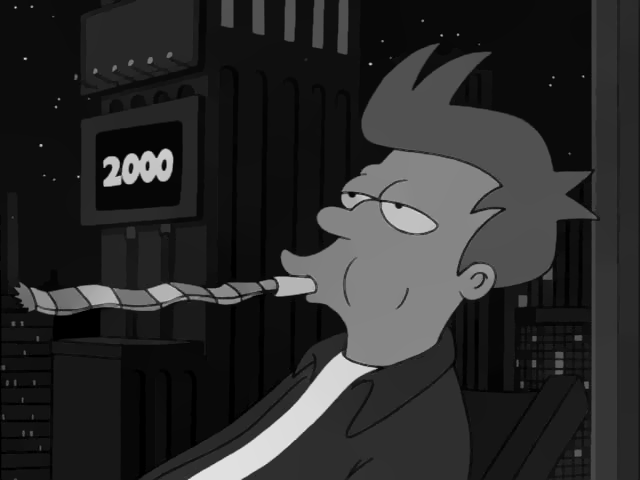} 
        \\
        & & 0.8827 & \textbf{0.9160}
        \\
        \includegraphics[width=0.2\textwidth]{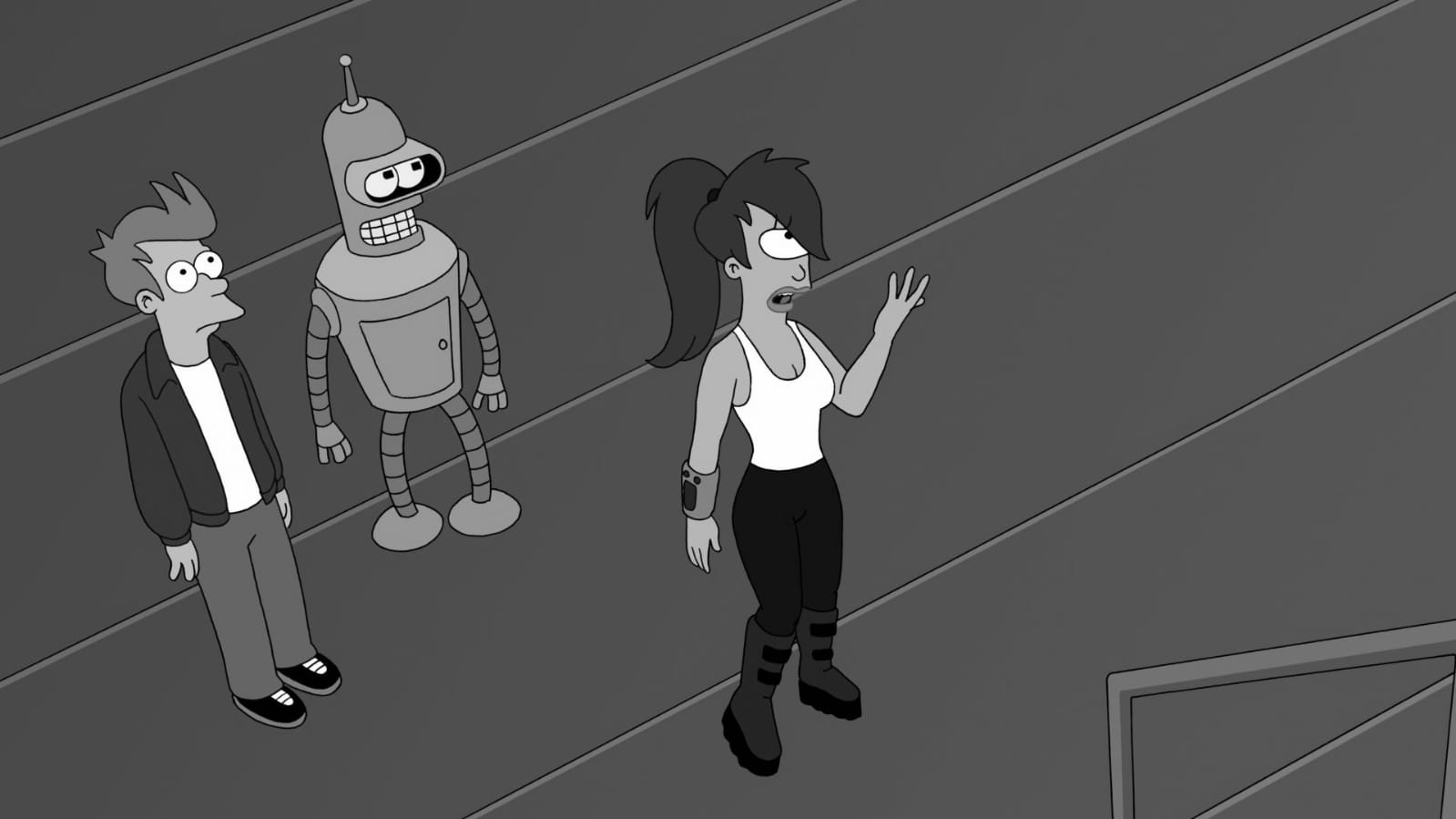} &
        \includegraphics[width=0.2\textwidth]{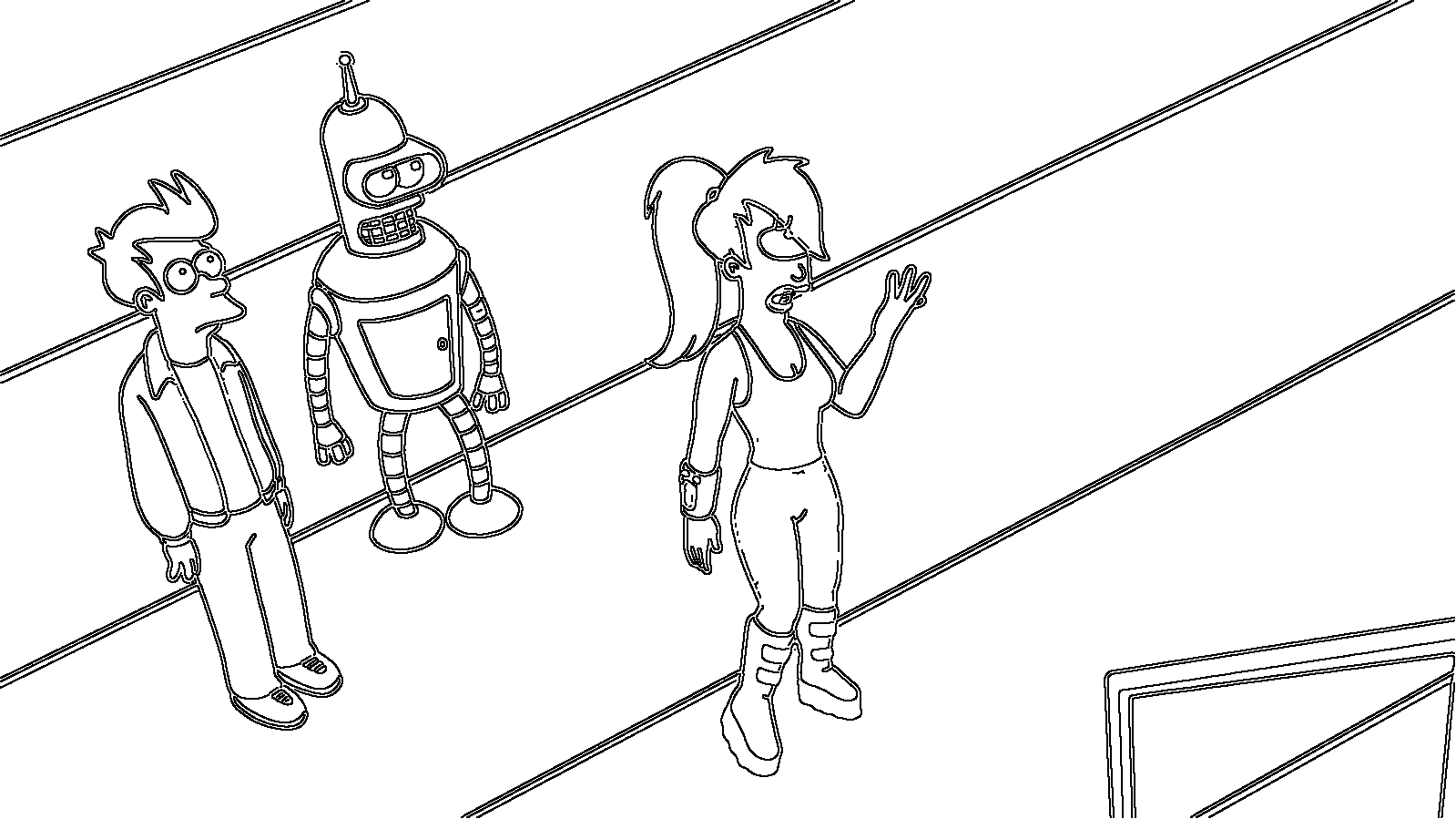} &
        \includegraphics[width=0.2\textwidth]{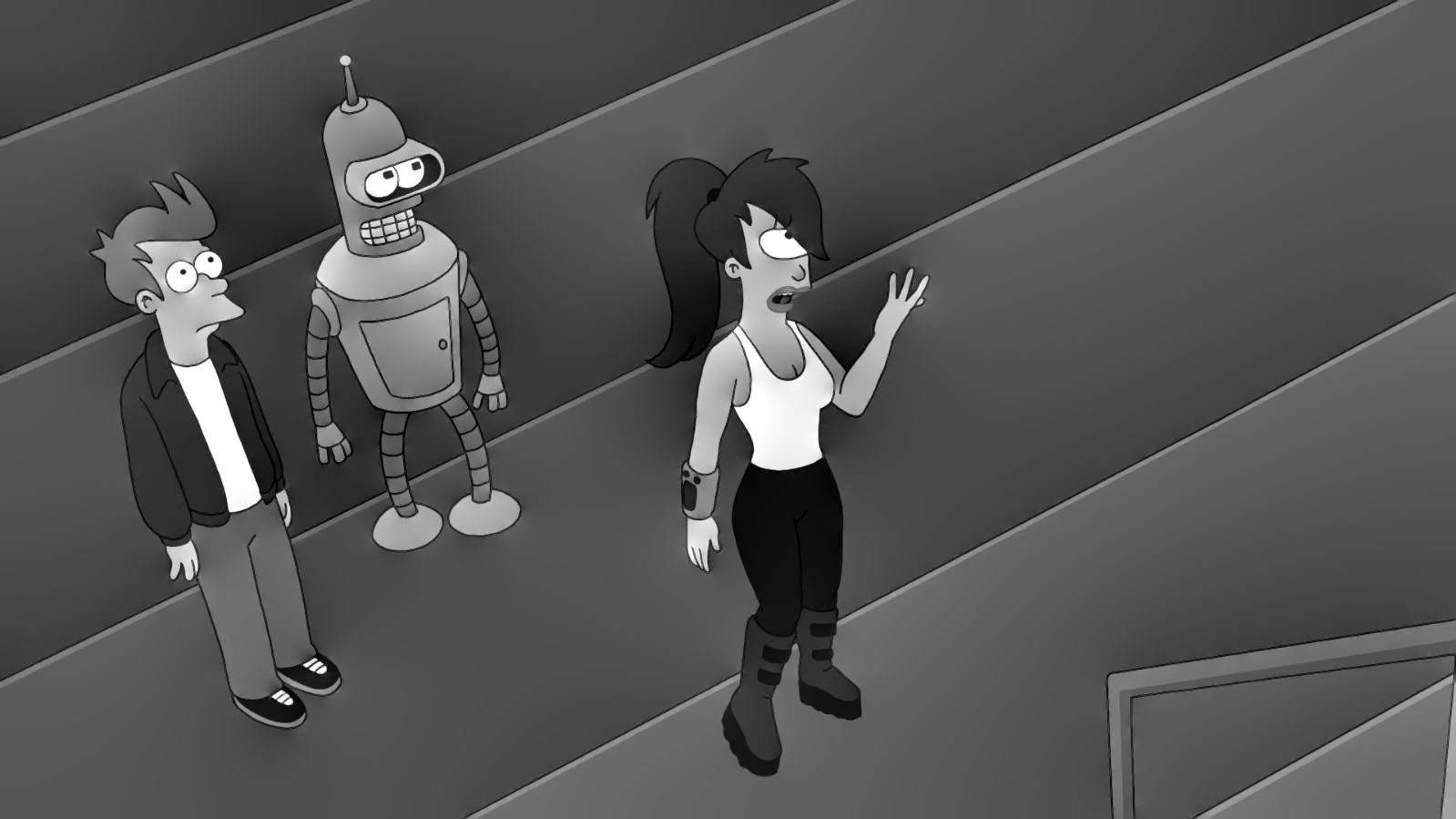} &
        \includegraphics[width=0.2\textwidth]{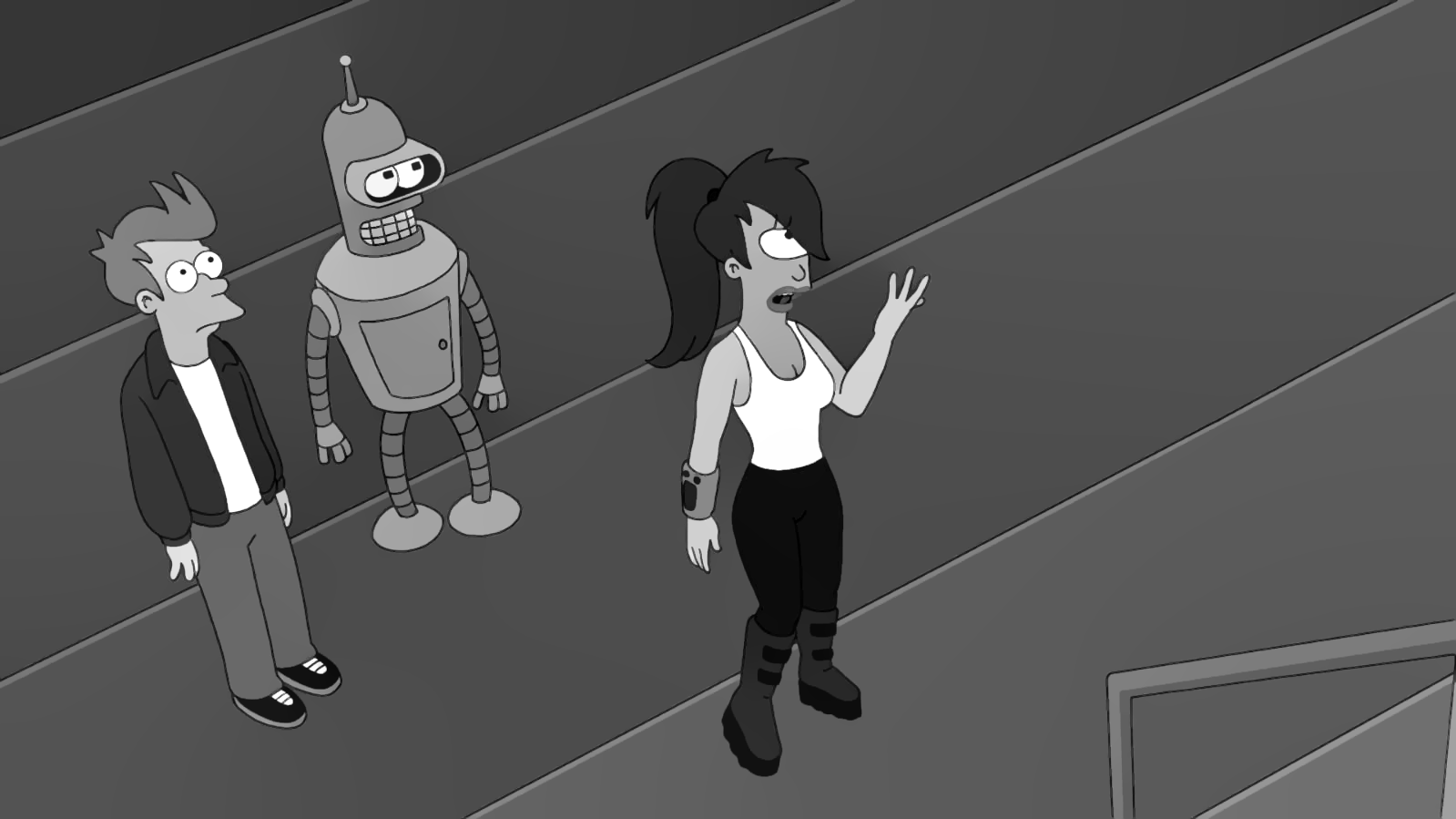} 
        \\
        & & 0.9671 & \textbf{0.9814}
        \\
        \hline
        Input $v$ & Edge mask & Linear \cite{vogel-ssvm} & Non-linear 
    \end{tabular}
    \caption{Compact data representation results with relative SSIM values. Edge mask has been computed by standard Canny segmentation.}
    \label{fig:cdr}
\end{figure}

Fig. \ref{fig:cdr} compares the results obtained via linear and non-linear osmosis. In general, the reconstructed images seem to have a lower contrast than the original ones, but the non linear model favours a better approximation, as confirmed by the SSIM values, in particular on strongly piecewise-constant images, such as cartoons.

\section{Conclusions}\label{sec:concl}

We presented a non-linear extension of the linear osmosis model originally proposed in \cite{weickert-ssvm}. The non-linearity is encoded in the model through a  diffusivity term $g$ whose definition depends on the magnitude of the diffusion-transport term,
which allows the model to balance the intensity values inside and outside the region of interest and, at the same time, to preserve the underlying features on the boundary (i.e., edges), thus preventing smoothing artefacts.  

Forward-Euler discretisation is used to approximate the time evolution. As far as spatial discretisation is concerned,  
 suitable finite difference schemes are defined.
A semi-implicit scheme is used to deal with nonlinear term in a computationally efficient way.

The proposed model enjoys conservation properties such average grey value and non-negativity preservation, both in its continuous formulation and in its discrete form. Unconditional stability of the semi-implicit iterative scheme can be proved using standard scale-space properties. A fast convergence to the steady-state solution can thus be obtained.

The efficiency and the
numerical accuracy of the presented schemes are validated
through three imaging applications: shadow and spot-light removal, and compact data representation.
The results are accurate, can be computed efficiently and outperform the ones obtained by alternative osmosis models and state-of-the-art approaches both visually and in terms of SSIM values. The simplicity of the model guarantees a low computational cost and, furthermore, a fast computation. Moreover, and most importantly, it only requires a preliminary segmentation of the region of interest (shadow, light-spot, edge mask) without any other tuning of model hyperparameter, which makes it, essentially, fully automatic.

An interesting future research direction is the study of a combined non-linear and anisotropic osmosis model as well as the possible extension to higher-order differential order so as to reduce the observed reconstruction drawbacks.

\section*{Acknowledgements}

LC acknowledges the support received by the I3S laboratory of Sophia-Antipolis, France, the CNRS PRIME project Imag'In and and by the EU H2020 RISE projects NoMADS, GA 777826. Research by SM was supported in part by the National Group for
Scientific Computation (GNCS-INDAM), Research Projects 2021.
SP acknowledges the support from the Leverhulme
Trust Research Project Grant (RPG-2018-121) ``Unveiling
the Invisible - Mathematics for Conservation in
Arts and Humanities''.
GR acknowledges the support received by University of Bologna and the I3S laboratory of Sophia-Antipolis, France.

\bibliographystyle{elsarticle-num}
\bibliography{main}

\end{document}